\newtheorem{thm}{Theorem}[section]
\newtheorem{lem}[thm]{Lemma}
\newtheorem{prop}[thm]{Proposition}
\newtheorem{cor}[thm]{Corollary}
\theoremstyle{remark}
\newtheorem*{cla}{\textit{Claim}}
\theoremstyle{definition}
\newtheorem{dfn}[thm]{Definition}
\newtheorem{probs}[thm]{Problems}
\newcommand{\bb}[1]{\mathbb{#1}}
\newcommand{\cc}[1]{{\mathscr #1}}
\newcommand{\alga}{{\bf A}}
\newcommand{\algb}{{\bf B}}
\newcommand{\algd}{{\bf D}}
\newcommand{\alge}{{\bf E}}
\newcommand{\algl}{{\bf L}}
\newcommand{\algp}{{\bf P}}
\newcommand{\algq}{{\bf Q}}
\newcommand{\algr}{{\bf R}}
\newcommand{\algt}{{\bf T}}
\newcommand{\meet}{\wedge}
\newcommand{\join}{\vee}
\newcommand{\la}{\langle}
\newcommand{\ra}{\rangle}
\newcommand{\dig}{\longrightarrow}
\newcommand{\geqstrong}{\geq_{1}}
\newcommand{\geqcong}{\geq_{2}}
\newcommand{\adj}{{standard $(2,3)$-instance}}
\newcommand{\leqsd}{\leq_{\sd}}
\DeclareMathOperator{\csp}{CSP}
\DeclareMathOperator{\alg}{Alg}
\DeclareMathOperator{\pr}{pr}
\DeclareMathOperator{\conn}{Con}
\DeclareMathOperator{\sd}{sd}
\title[2-semilattice over edge]{A constraint satisfaction problem algorithm for certain $2$-semilattice-over-edge algebras}
\author{Ian Payne}
\address{Department of Pure Mathematics, University of Waterloo, Waterloo, Ontario}
\email{ipayne@uwaterloo.ca}
\date{\today}
\begin{document}
\thanks{The author was supported by the Natural Sciences and Engineering Research Council of Canada.}
\keywords{2-semilattice; constraint satisfaction problem; digraph; edge-operation; Maltsev product; universal algebra}
\subjclass[2016]{Primary 08A70}

\begin{abstract}
To any fixed, finite relational structure, $\bb{D}$, there is an associated decision problem, $\csp(\bb{D})$, which is a restricted version of the constraint satisfaction problem. In \cite{bjk}, the so called ``algebraic approach" to the constraint satisfaction problem was established. The authors showed that to any finite relational structure, there is a corresponding finite algebra, and that the complexity of $\csp(\bb{D})$ depends only on this algebra. Therefore, they associate a decision problem, $\csp(\algd)$ to an algebra, $\algd$, and ignore the relational structure. Their ``algebraic dichotomy conjecture" suggests that a technical condition on $\algd$ implies $\csp(\algd)$ has a polynomial time algorithm. A significant sub-problem is the case when some reduct of $\algd$ has a congruence, $\theta$ so that $\algd/\theta$ has operations implying the local consistency algorithm correctly solves $\csp(\algd/\theta)$, and each $\theta$-equivalence class, $B$, has operations implying the few subpowers algorithm correctly solves $\csp(\algb)$. We give an algorithm for the case when $\algd$ has a binary term operation which is a $2$-semilattice operation on some quotient, $\algd/\theta$ of $\algd$, a projection on each $\theta$-class, and two other technical conditions are satisfied. Using this, we confirm the conjecture in the case that $\algd$ is in the join of two varieties, one of which has an edge term and the other is term equivalent to the variety of $2$-semilattices.
\end{abstract}

\maketitle

\section{Introduction}

Mathematicians have long been interested in computational problems involving assigning values to variables subject to constraints. For example, graph colouring was already of interest as early as the late 19th century, and boolean satisfiability has been of interest for well over half a century. People have been interested in scheduling problems since well before either of these. It was in 1974 in \cite{montanari} that Montanari wrote down a framework that could precisely describe many problems that involve finding solutions to constraints. This general form of the problem consists of a finite set, $X$, of variables, a finite set, $D$, of values, and a finite set of constraints restricting the values of tuples of variables from $X$. More precisely, a constraint is a pair $({\bf x},R)$ where ${\bf x}\in X^n$ and $R\subseteq D^n$. A function $\varphi:X\to D$ satisfies $({\bf x},R)$ if $(\varphi(x_1),\dots,\varphi(x_n))\in R$. We call the triple $(X,D,\cc{C})$ where $\cc{C}$ is a set of constraints an {\it instance}, and a solution to an instance is a function $\varphi:X\to D$ which simultaneously satisfies each constraint in $\cc{C}$.

It's not hard to encode problems such as graph $3$-colourability and boolean $3$-satisfiability in this framework. It follows that the decision problem which takes an instance and outputs YES if there is a solution, and NO otherwise is {\bf NP}-complete. Assuming ${\bf P}\neq{\bf NP}$, the problem can be made more interesting by restricting the types of constraints allowed in an instance. Roughly speaking, we consider a new decision problem where the question is the same, but a domain, $D$, and a set, $\cc{R}$ of allowed relations on $D$ are fixed. Instances of this decision problem are of the form $(X,D,\cc{C})$ where relations mentioned in the constraints in an instance must come from $\cc{R}$. For example, if we take $D=\{0,1\}$ and the only allowed relation is $R=\{(0,1),(1,0)\}$, then the associated decision problem is equivalent to graph $2$-colourability. This problem is well known to be solvable in polynomial time. However, if $D=\{0,1,2\}$ and the only allowed relation is $S=\{(0,1),(1,0),(0,2),(2,0),(1,2),(2,1)\}$, then the associated decision problem precisely encodes graph $3$-colourability, which is known to be ${\bf NP}$-complete. Therefore, if ${\bf P}\neq{\bf NP}$, there is some content to the question which asks about the complexity of such a restricted decision problem.

The major unsettled conjecture in the area was made by Feder and Vardi in \cite{feder-vardi}. Their conjecture is that each such restricted decision problem is in ${\bf P}$ or it is ${\bf NP}$ complete. It is known as the dichotomy conjecture, and it is nontrivial if ${\bf P} \neq {\bf NP}$. The conjecture was made based on evidence from dichotomy theorems of Hell and Ne\v{s}et\v{r}il in \cite{hell-nesetril} and Schaefer in \cite{schaefer}. There have since been dichotomy theorems in other special cases. Some of these results are summarized later in this section.

In the late 1990s, Jeavons and a variety of coauthors in several papers including \cite{bj}, \cite{bjk}, \cite{jc}, and \cite{jcp} established a connection between constraint satisfaction problems and universal algebra. They also stated a stronger dichotomy conjecture known as the algebraic dichotomy conjecture. It implies the conjecture of Feder and Vardi, and has provided a very popular approach to dichotomy conjecture. The algebraic approach to the constraint satisfaction problem takes advantage of the structure of the ``algebra of polymorphisms" of the underlying relational structure. The discoverers defined the class of ``tractable" algebras, a class of finite algebras closed under taking subalgebras, homomorphic images, finite powers, and term expansions. A class of algebras (in particular, a variety) is called tractable if each of its finite members is tractable. They also associated to each finite relational structure, $\bb{D}$, a finite algebra, $\alg(\bb{D})$ with the property that $\csp(\bb{D})$ is solvable in polynomial time if and only if $\alg(\bb{D})$ is tractable. In \cite{bjk}, it is shown that if every operation of $\alg(\bb{D})$ satisfies $f(x,x,\dots,x)\approx x$ (is idempotent), and $\alg(\bb{D})$ has no ``Taylor operation", then $\csp(\bb{D})$ is ${\bf NP}$-complete. The algebraic dichotomy conjecture states that if every operation of $\alg(\bb{D})$ is idempotent and it has a Taylor operation (defined in Section~\ref{algebrasection}), then $\csp(\bb{D})$ is solvable in polynomial time. Equivalently, if $\alg(\bb{D})$ has a Taylor operation, then it is tractable.

Since the problem was posed in \cite{bjk}, there have been many partial confirmations. The two most general are when $\alg(\bb{D})$ is idempotent and has an ``edge" operation, and when $\alg(\bb{D})$ has operations which imply ``congruence meet semi-dristributivity". See \cite{hobbymckenzie} for a formal definition of the latter. In the first case, $\alg(\bb{D})$ is shown to be tractable by the ``few subpowers" algorithm, which is a generalization of Gaussian elimination. This most general case was proved gradually by various authors in \cite{bulatov-dalmau},\cite{dalmau}, \cite{fewsubpowers1}, and \cite{fewsubpowers2}. In the latter, $\alg(\bb{D})$ is shown to be tractable by ``local consistency checking". This algorithm checks an input to $\csp(\bb{D})$ for local solutions, which, in this case, implies the existence of a global solution. In \cite{larose-zadori}, Larose and Z\'adori showed that if $\alg(\bb{D})$ is tractable via the local consistency checking algorithm, then $\alg(\bb{D})$ is in a congruence meet semi-distributive variety. Barto and Kozik proved the converse in \cite{bkbounded}.

This paper studies a class of algebras which have a Taylor operation but do not fall in either of the cases described above. In particular, we consider finite algebras, $\algd$, which have a congruence, $\theta$, so that $\algd/\theta$ has a $2$-semilattice operation (defined in the next section), and the $\theta$-classes belong to a common tractable variety. It should be noted that having a $2$-semilattice operation implies congruence meet semi-distributivity. More precisely, the main result is the following.

\begin{thm}\label{mainthm}
Let $\cc{W}$ be a tractable variety, and $\algd$ be a finite algebra similar to $\cc{W}$. Suppose $\algd$ has a binary term, $\cdot$, and a congruence, $\theta$ such that $\cdot$ is a $2$-semilattice operation on $\algd/\theta$, and each $\theta$-class as a subalgebra of $\algd$ is in $\cc{W}$. Also suppose the following hold:
\begin{enumerate}
\item $\cc{W}\vDash x\cdot y\approx x$
\item $\algd\vDash x\cdot(y\cdot z)\approx x\cdot(z\cdot y)$.
\end{enumerate}
Then $\algd$ is tractable.
\end{thm}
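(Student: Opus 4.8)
The plan is to give a polynomial-time algorithm for $\csp(\algd)$ which, on each instance, separates the behaviour of the variables modulo $\theta$ from their behaviour inside the $\theta$-classes: the quotient behaviour is governed by the $2$-semilattice operation $\cdot$ and will be handled by a local consistency algorithm, while the within-class behaviour is handled by the algorithm witnessing tractability of $\cc{W}$. Since $\cdot$ is a $2$-semilattice operation, $\algd/\theta$ lies in a congruence meet-semidistributive variety (as remarked in the Introduction), so $\csp(\algd/\theta)$ has bounded width and is decided by local consistency. Given an instance $\cc{I}$, the algorithm first runs a sufficiently strong local consistency procedure (e.g.\ singleton $(2,3)$-consistency) and outputs NO if it fails; otherwise each variable $x$ gets a nonempty reduced domain $D_x\subseteq D$. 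Looking at the quotient instance $\cc{I}/\theta$, the bounded-width conclusion gives that its reduced domains $P_x := D_x/\theta$ are exactly the projections of the (nonempty) solution set of $\cc{I}/\theta$, and that each $P_x$ is a subalgebra of the $2$-semilattice $\algd/\theta$.

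The core of the proof is a ``collapsing'' step for the quotient part. Using the digraph of the $2$-semilattice $\algd/\theta$, in which $[a]\dig[b]$ iff $[a]\cdot[b]=[b]$, each subalgebra $P_x$ has a nonempty terminal part $T_x$, namely the union of its sink strongly connected components. I would prove that, treating one variable $x$ at a time, replacing $P_x$ by $T_x$ (and then re-running the consistency procedure) preserves solvability of the \emph{original} instance $\cc{I}$, not just of $\cc{I}/\theta$. The point is that $\cdot$ is a polymorphism of $\algd$, so if $\hat\sigma,\hat\tau$ are solutions of $\cc{I}$ then so is the coordinatewise product $\hat\sigma\cdot\hat\tau$; starting from an arbitrary solution $\hat\sigma$ and combining it with a carefully chosen sequence of solutions — obtained by lifting to $\algd$ the quotient solutions that realise the elements of $P_x$ — one drives the $\theta$-class of the $x$-coordinate step by step along the digraph into $T_x$. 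Hypothesis (2), $x\cdot(y\cdot z)\approx x\cdot(z\cdot y)$, is exactly what is needed here: it forces the nested right-products that appear in this driving procedure to be independent of the order in which the auxiliary solutions are combined, which is what lets the modification be performed simultaneously at all coordinates and then lifted coherently from $\algd/\theta$ to $\algd$. I expect this collapsing/lifting step to be the main obstacle: it is the one place where the $2$-semilattice quotient and the $\cc{W}$-fibres genuinely interact, and the delicate points are establishing that the required auxiliary solutions lift, that the order-independence really does follow from (2) together with the $2$-semilattice laws on $\algd/\theta$, and that re-establishing consistency afterwards destroys no solution of $\cc{I}$.

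Once the collapsing step has been iterated over all variables, each $P_x$ is a single sink strongly connected component, and one further reduction of the same kind pins every variable $x$ to a single $\theta$-class $C_x$. The residual instance then involves only the subalgebras $C_x$ and subalgebras of finite products of $\theta$-classes of $\algd$; since these $\theta$-classes all lie in the tractable variety $\cc{W}$ and there are only finitely many of them, this multi-sorted instance is solvable in polynomial time using $\cc{W}$'s algorithm, and the overall algorithm outputs YES exactly when this final call succeeds. Correctness is immediate from the solvability-preservation proved at each step, and the running time is polynomial: at most $|X|$ collapsing rounds, each consisting of a polynomial-time consistency re-run and a polynomial-time domain reduction, followed by one polynomial-time call to the $\cc{W}$-algorithm.
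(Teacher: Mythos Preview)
Your overall architecture matches the paper's: pass to the quotient $\cc{I}/\theta$, use the $2$-semilattice structure there to pin each variable to a single $\theta$-class, then hand the residual instance to the $\cc{W}$-algorithm. The ``solution-moving'' idea---if $\cc{I}$ has a solution through $\varphi$ and $\varphi\dig\psi$ in the quotient, then it has one through $\psi$---is exactly the paper's Lemma~\ref{solutionsmoveacrossarrows}, and hypothesis~(2) is indeed what makes the map $a\mapsto a\cdot b$ depend only on $b/\theta$ (the paper isolates this as Lemma~\ref{functionslem}).

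The genuine gap is your collapsing step. Replacing each $P_x$ by its sink strongly connected component does \emph{not} reduce $P_x$ to a singleton: a finite $2$-semilattice can be strongly connected with more than one element (take the three-element algebra with $a\cdot b=b$, $b\cdot c=c$, $c\cdot a=a$; the digraph is a $3$-cycle). So your assertion that ``one further reduction of the same kind pins every variable $x$ to a single $\theta$-class'' is false, and without it the hand-off to $\cc{W}$ never happens. The paper gets past this with a second, structurally different reduction (Definition~\ref{decompdfn} and Lemma~\ref{nonsimplepotato}): once all potatoes and relations are strongly connected, one picks a maximal congruence $\alpha_u$ on some $\algp_u$ with $|P_u|>1$ and splits the instance along the $\alpha_u$-classes. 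Showing each piece is again a \adj{} uses Bulatov's Lemma~\ref{bul3.8} on ternary subdirect products, and showing that a solution can be walked into any chosen piece uses the strong connectivity of $\algp_u/\alpha_u$ (Lemma~\ref{inequalitiesarrows}(2)). Alternating the two reductions is what produces the chain of Definition~\ref{bulatovsolutions} terminating in singleton potatoes; your proposal has only the first of these two reductions.

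A smaller point: your phrase ``lifting to $\algd$ the quotient solutions'' suggests you need an actual solution of $\cc{I}$ sitting over each auxiliary $\psi$, but no such lift need exist a priori (if it always did, the problem would be trivial). What is actually used is weaker: for each pair $x,y$ there is \emph{some} $(a,b)\in R_{xy}$ with $a/\theta=\psi(x)$, $b/\theta=\psi(y)$, and hypothesis~(2) makes $s(x)\cdot a$ independent of which such $a$ you choose, so the local choices glue to a global solution through $\psi$.
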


Theorem~\ref{mainthm} can be used to prove the following more concrete corollary.

\begin{cor}\label{maincor}
Suppose $\cc{W}$ and $\cc{T}$ are similar idempotent varieties. If $\cc{W}$ has an edge term and $\cc{T}$ is term equivalent to the variety of $2$-semilattices, then $\cc{W}\join\cc{T}$ is tractable.
\end{cor}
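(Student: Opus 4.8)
The plan is to derive Corollary~\ref{maincor} from Theorem~\ref{mainthm}. Fix a binary term $s$ witnessing that $\cc{T}$ is term equivalent to the variety of $2$-semilattices, so that $\cc{T}\vDash s(x,x)\approx x$, $\cc{T}\vDash s(x,y)\approx s(y,x)$ and $\cc{T}\vDash s(x,s(x,y))\approx s(x,y)$, and fix an edge term $e$ of $\cc{W}$ of some arity $m\ge 3$. Since $\cc{W}$ has an edge term it is tractable by the few subpowers algorithm, so it may serve as the tractable variety in Theorem~\ref{mainthm}. Because the one-element algebra lies in both $\cc{W}$ and $\cc{T}$, the class $\cc{W}\cup\cc{T}$ is contained in $\{\alga\times\algb:\alga\in\cc{W},\ \algb\in\cc{T}\}$, and that class is in turn contained in $\cc{W}\join\cc{T}$; hence $\cc{W}\join\cc{T}=\hsp(\{\alga\times\algb:\alga\in\cc{W},\ \algb\in\cc{T}\})$. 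As tractability is preserved by $\hs$ \cite{bjk}, and as a standard reduction (using that $\cc{T}$ is locally finite) shows that every finite member of $\cc{W}\join\cc{T}$ is a homomorphic image of a finite subalgebra $\algc$ of some product $\alga\times\algb$ with $\alga\in\cc{W}$ and $\algb\in\cc{T}$ finite, it suffices to prove every such $\algc$ tractable.

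So fix $\algc\leq\alga\times\algb$ as above and let $\theta$ be the kernel of the restricted projection $\algc\to\algb$. Then $\algc/\theta$ embeds in $\algb\in\cc{T}$, and each $\theta$-class, regarded as a subalgebra of $\algc$, is contained in the subalgebra $\alga\times\{b\}\cong\alga$ of $\alga\times\algb$ (a subalgebra because $\cc{T}$ is idempotent), so it belongs to $\cc{W}$. Thus the structural hypotheses of Theorem~\ref{mainthm} hold with tractable variety $\cc{W}$, and it remains only to produce a binary term $\cdot$ with $\cc{W}\vDash x\cdot y\approx x$ (which is condition~(1)) and $\cc{T}\vDash x\cdot y\approx s(x,y)$. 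The second identity makes $\cdot$ a $2$-semilattice operation throughout $\cc{T}$, hence on $\algc/\theta$; and then condition~(2) holds on $\algc$, since on the $\alga$-coordinate $x\cdot(y\cdot z)\approx x\approx x\cdot(z\cdot y)$ by condition~(1), while on the $\algb$-coordinate $\cdot$ is commutative.

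It remains to build $\cdot$. The set $\{x,s(x,y)\}$ is a subalgebra of the free algebra $F_{\cc{T}}(\{x,y\})$, isomorphic to the two-element semilattice with $s(x,y)$ as least element (indeed $s(x,y)$ is the absorbing element of $F_{\cc{T}}(\{x,y\})$). Hence the restriction of the $m$-ary term $e$ to this subalgebra is a meet of coordinates $\bigwedge_{i\in S}v_i$, with $S\neq\emptyset$ because $e$ is idempotent. Pick $i_0\in S$. One of the defining identities of an edge term places its ``$x$''-pattern on a set $M$ of coordinates with $i_0\in M$ (namely $M=\{1,2\}$, $M=\{1,3\}$, or $M=\{i_0\}$, according to the value of $i_0$). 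Define $x\cdot y:=e(w_1,\dots,w_m)$ where $w_i=s(x,y)$ for $i\in M$ and $w_i=x$ for $i\notin M$. Evaluating the chosen edge identity with $s(x,y)$ in the ``$x$''-role and $x$ in the ``$y$''-role gives $\cc{W}\vDash x\cdot y\approx x$. On the other hand, in $F_{\cc{T}}(\{x,y\})$ every $w_i$ lies in $\{x,s(x,y)\}$ and $w_{i_0}=s(x,y)$ is least, so $\bigwedge_{i\in S}w_i=s(x,y)$, whence $\cc{T}\vDash x\cdot y\approx s(x,y)$. Theorem~\ref{mainthm} now applies to $\algc$, and the corollary follows.

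The step I expect to be hardest is the construction of $\cdot$ in the last paragraph: although $\cc{T}$ is a variety of $2$-semilattices with respect to $s$, the term $s$ itself need not restrict to a projection on $\cc{W}$, and the content is to trade it for a term that does while preserving its behaviour on $\cc{T}$. This is exactly where the edge term of $\cc{W}$ is used; for the variety of semilattices, which has no edge term, no such $\cdot$ exists. The only other slightly delicate point is the reduction of the first paragraph when $\cc{W}$ is not locally finite, which should be handled by a routine compactness argument producing a finite $\alga$.
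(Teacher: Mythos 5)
Your second and third paragraphs are sound: the construction of $\cdot$ from the edge identities is essentially the paper's Lemma~\ref{edgetermlem} (your version, which picks a single coordinate $i_0$ on which $e$ acts nontrivially on the two-element semilattice $\{x,s(x,y)\}\leq F_{\cc{T}}(\{x,y\})$ and then chooses the matching edge identity, is if anything cleaner than the paper's case analysis on the dependence set). One small slip there: you must put $s(x,y)$ into the \emph{minority} positions of the chosen identity (the role of $y$ in $e(y,y,x,\dots,x)\approx x$, etc.) and $x$ into the majority positions; your explicit sets $M=\{1,2\},\{1,3\},\{i_0\}$ do exactly this, but the sentence ``with $s(x,y)$ in the `$x$'-role and $x$ in the `$y$'-role'' has the roles reversed and, read literally, would give $\cc{W}\vDash x\cdot y\approx s(x,y)$.

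The genuine gap is the reduction in your first paragraph. You need every finite $\algd\in\cc{W}\join\cc{T}$ to be a homomorphic image of a \emph{finite} subalgebra $\algc$ of some $\alga\times\algb$ with $\alga\in\cc{W}$ and $\algb\in\cc{T}$ finite, and you defer this to ``a routine compactness argument.'' It is not routine, and in general statements of the form ``every finite member of $V(\cc{K})$ is a homomorphic image of a finite member of $\isp(\cc{K})$'' are false (e.g.\ $\bb{Z}_2\in V(\bb{Z})$ is not a quotient of any finite subgroup of a power of $\bb{Z}$); since edge-term varieties are typically not locally finite, the finitely generated $\algc\leq F_{\cc{W}}(n)\times F_{\cc{T}}(n)$ that the standard argument produces has no reason to be finite, and there is no evident way to replace it by a finite one. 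This missing step is precisely where the real content of the corollary beyond the term construction lives: the paper handles it by proving that the Maltsev product $\cc{W}^{(2)}\circ\cc{T}$ is closed under homomorphic images (Proposition~\ref{maltsevprodvariety}, whose proof needs both the term $\cdot$ and the passage to the two-variable axiomatization $\cc{W}^{(2)}$ of $\cc{W}$), so that every finite member of $\cc{W}\join\cc{T}$ \emph{itself} carries the canonical congruence $\theta_{\algd}$ of Definition~\ref{specialcongruence} and Theorem~\ref{mainthm} applies to it directly --- no realization as a finite subalgebra of a product is needed, and indeed such a realization may not exist (Maltsev-product members need not split as subdirect products). To repair your argument you would have to either prove your finiteness claim or, as the paper does, route the homomorphic-image step through the Maltsev product; note also that the paper's route only places the $\theta$-classes in $\cc{W}^{(2)}$ rather than $\cc{W}$, which is why the preliminary replacement of $\cc{W}$ by $\cc{W}^{(2)}$ (still an edge-term, hence tractable, variety) is made at the outset.
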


The proof of Theorem~\ref{mainthm} closely follows Bulatov's proof from \cite{bulatov1} that finite $2$-semilattices are tractable.

Section~\ref{algebrasection} is meant to be an introduction to universal algebra. If the reader finds it necessary, a much more comprehensive introduction to universal algebra can be found in either \cite{bergman} or \cite{burrissanka}. Both are well written introductions to the subject, but the notation in this paper will more closely follow the former than the latter. Section~\ref{cspsection} is a formal introduction to the constraint satisfaction problem ($\csp$). Hopefully it will indicate how the algebraic approach to the $\csp$ is a very natural way to study the purely combinatorial problem vaguely introduced in the first paragraph of this introduction. Section~\ref{bulatovsection} goes through a proof of Bulatov's result from \cite{bulatov1}, and Section~\ref{bulatovsolutionssection} describes what we have called ``Bulatov solutions" to certain $\csp$ instances. Section~\ref{mainresult} includes a proof of Theorem~\ref{mainthm}, and Section~\ref{applicationsection} contains a proof of Corollary~\ref{maincor}.

\section{Algebra}\label{algebrasection}
For a set, $A$, an $n$-ary operation on $A$ is a function, $f:A^n\to A$. The integer $n$ is called the arity of $f$, and we say that $f$ is $n$-ary. When $n=1,2$, or $3$, we use the words unary, binary, and ternary, respectively. An algebra is a pair, $\alga=(A;F)$ where $A$ is a nonempty set called the universe of ${\bf A}$, and $F$ is a set of operations on $A$. We will always denote the universe by a capital letter and the algebra by the corresponding bold letter. The algebras in this paper will usually be ``indexed", which means the set of operations is indexed in the following sense. We fix a symbol set, $\cc{F}$, and a function $\rho:\cc{F}\to\bb{N}$ which assigns arty. An algebra is said to have similarity type $\rho$ if its set of operations is $\cc{F}^{\alga}=( f^{\alga}:f\in\cc{F})$ where $f^{\alga}$ is some $n$-ary operation on $A$ where $\rho(f)=n$. We will often omit the word ``similarity" and simply say ``type". As well, we will often think of a similarity type as a set of symbols with built-in arities and say things like ``$f$ is in the type of $\alga$" rather than ``$f$ is in the domain of the type of $\alga$". The superscripts will usually be omitted, unless the algebra is not clear from context. An algebra, $\alga$, is said to be finite if $A$ is finite, and $\alga$ is said to have finite type if $\cc{F}$ is finite. In the case that $\cc{F}$ is finite, we will often denote an algebra of type $\rho$ simply by listing the operations after the semicolon. For example, if $\cc{F}=\{f,g,h\}$, we may write $\alga=(A;f^{\alga},g^{\alga},h^{\alga})$ or $\alga=(A;f,g,h)$ rather than $\alga=(A,(f^{\alga},g^{\alga},h^{\alga}))$.

We say that two algebras are similar if they have the same similarity type. For example, all groups are similar since their similarity type can be thought to have symbol set $\cc{F}=\{\cdot,^{-1},e\}$ with $\rho(\cdot)=2$, $\rho(^{-1})=1$, and $\rho(e)=0$. Each individual group then has its own binary, unary, and zero-ary operations. For an algebra, $\alga$, of type $\rho:\cc{F}\to\bb{N}$, we call $\cc{F}^{\alga}$ its set of basic operations. Its term operations informally consist of the basic operations, all projections, and all operations obtained by composition of these. More precisely, a term operation is the natural interpretation as an operation of some term in the symbol set from the similarity type. For example, if $\rho$ has a binary symbol, $f$, and a ternary symbol, $g$, then the symbol set from $\rho$ has a term $fx_4gx_1x_3fx_5gx_2x_2x_3$. This gives rise to a $5$-ary operation called a term operation, $h^{\alga}$ defined by $h^{\alga}(x_1,x_2,x_3,x_4,x_5)=f^{\alga}(x_4,g^{\alga}(x_1,x_3,f^{\alga}(x_5,g^{\alga}(x_2,x_2,x_3))))$ for every algebra, $\alga$, of type $\rho$.

For an algebra of type $\rho:\cc{F}\to\bb{N}$, a subuniverse of $\alga$ is a subset, $B\subseteq A$ which is closed under all operations in $\cc{F}^{\alga}$. A subalgebra of $\alga=(A;\cc{F}^{\alga})$ is an algebra $\algb=(B;\cc{F}^{\algb})$, similar to $\alga$, where $B$ is a nonempty subuniverse of $\alga$, and $f^{\algb}=f^{\alga}\!\!\upharpoonright_{B}$ for each symbol, $f\in\cc{F}$. We write $B\leq \alga$ if $B$ is a subuniverse of $\alga$, and $\algb\leq\alga$ if $\algb$ is a subalgebra of $\alga$. If $\alga$ and $\algb$ are similar algebras, a homomorphism from $\alga$ to $\algb$ is a function $\varphi:A\to B$ such that for every $n$-ary operation symbol, $f$ of arity $n$, and any $a_1,\dots,a_n\in A$, we have $\varphi(f^{\alga}(a_1,\dots,a_n))=f^{\algb}(\varphi(a_1),\dots,\varphi(a_n))$. In the case that $\varphi$ is surjective, we call $\algb$ a homomorphic image of $\alga$, and if $\varphi$ is a bijection, we call it an isomorphism and say that $\alga$ and $\algb$ are isomorphic. For a family, $(\alga_u:u\in U)$ of similar algebras, the product $\alga=\prod_{u\in U}\alga_u$ has universe $\prod_{u\in U}A_u$, which is formally the set of functions, $\sigma:U \to \bigcup_{u\in U} A_u$ with $\sigma(u)\in A_u$ for each $u$. For each $n$-ary operation symbol, $f$ in the type of the $\alga_u$, the operation $f^{\alga}$ is defined by $f^{\alga}(\sigma_1,\dots,\sigma_n)(u)=f^{\alga_u}(\sigma_1(u),\dots,\sigma_n(u))$. All products explicitly mentioned in this paper will be finite, but arbitrary products are important to the theory of universal algebra. When taking the product of finitely many algebras, the universe can be visualized as the usual cartesian product consisting of $n$-tuples, and the operations are coordinate-wise. We say that $\algr$ is a subdirect product of $(\alga_u:u\in U)$ if $\algr\leq\prod_{u\in U}\alga_u$ and $\pr_u(R)=A_u$ for every $u\in U$. In this case, we write $\algr\leqsd\prod_{u\in U}\alga_u$.

A congruence on an algebra, $\alga$, is an equivalence relation on its domain so that the operations of $\alga$ act in a well defined way on its equivalence classes. This means that an equivalence relation, $\theta$, on $A$ is a congruence on $\alga$ if for any $n$-ary operation, $f$ of $\alga$ and any $(a_1,b_1),\dots,(a_n,b_n)\in\theta$, we have that $(f(a_1,\dots,a_n),f(b_1,\dots,b_n))\in\theta$ as well. We will often use the notation $a\overset{\theta}{\equiv} b$ to denote $(a,b)\in\theta$. Congruences will usually be denoted by Greek letters, with a notable exception in the next sentence. Every algebra, $\alga$, has two congruences, $0_A=\{(a,a):a\in A\}$ and $1_A=A\times A$. In the case that $|A|=1$, these congruences coincide. Because of how congruences are defined, we can take a quotient of $\alga$ by a congruence, $\theta$, and obtain an algebra, $\alga/\theta$, similar to $\alga$. The universe is $A/\theta$ and for an $n$-ary operation symbol, $f$, the operation $f^{\alga/\theta}$ is defined by $f^{\alga/\theta}(a_1/\theta,\dots,a_n/\theta)=f^{\alga}(a_1,\dots,a_n)/\theta$. The operations being well-defined is precisely the condition that distinguishes congruences from other equivalence relations. It's not hard to see that for a homomorphism $\varphi:\alga\to\algb$, the kernel, defined by $\ker(\varphi)=\{(a_1,a_2)\in A^2:\varphi(a_1)=\varphi(a_2)\}$, is a congruence on $\alga$. Furthermore, if $\varphi$ is surjective, $\alga/\ker(\theta)\cong\algb$. As well, for any congruence $\theta$ on $\alga$, the kernel of the canonical homomorphism $\varphi:\alga\to\alga/\theta$ is $\theta$. Therefore, we get the usual correspondence between quotients and homomorphic images. For congruences, $\alpha$ and $\beta$ on an algebra, we define $\alpha\meet\beta=\alpha\cap\beta$ and $\alpha\join\beta$ to be the transitive closure of $\alpha\cup\beta$. Both are congruences, and the set of congruences forms a complete bounded lattice, known as the congruence lattice, with respect to these operations.

A variety in universal algebra is a class of similar algebras which is closed under taking products, subalgebras, and homomorphic images (quotients). By a result of Birkhoff \cite{birkhoff}, a variety is defined by some set of equations. This means that every variety is the class of algebras satisfying some fixed set of identities in its similarity type, and vice versa. An identity is a universally quantified equation. For example, consider an algebra, $\alga$ whose similarity type yields terms $s$ and $t$. We say that $\alga$ satisfies $s\approx t$ and write $\alga\vDash s\approx t$ if $s^{\alga}=t^{\alga}$. This may seem nonsensical as $s^{\alga}$ and $t^{\alga}$ need not even be functions of the same variables. For example, perhaps $s=fx_3fx_1x_2$ and $t=fx_3fx_5x_4$. In this case, $s^{\alga}\approx t^{\alga}$ means for any assignment $(a_1,a_2,a_3,a_4,a_5)\in A^5$ of $(x_1,x_2,x_3,x_4,x_5)$, we have that $s^{\alga}(a_1,a_2,a_3)=t^{\alga}(a_3,a_4,a_5)$. We say that a variety, $\cc{V}$, satisfies $s\approx t$ and write $\cc{V}\vDash s\approx t$ if every algebra in $\cc{V}$ satisfies it. Groups are a familiar example of a variety axiomatized by a short list of identities. The class of all groups is a variety since it is closed under taking products, subgroups, and homomorphic images. It is also precisely the class of algebras with similarity type given by the symbol set $\{\cdot,^{-1},e\}$ having arities $2,1, $ and $0$, respectively, that satisfy the group identities.

Also in \cite{birkhoff}, Birkhoff proved that the variety generated by a set, $\cc{K}$ of similar algebras (defined to be the intersection of all varieties containing $\cc{K}$) is exactly the class of homomorphic images of subalgebras of products of members of $\cc{K}$. The variety generated by $\cc{K}$ will be denoted by $V(\cc{K})$. For similar varieties, $\cc{V}$ and $\cc{W}$, we denote by $\cc{V}\join\cc{W}$ the variety generated by their union.

A $2$-semilattice operation is a binary operation, $\cdot$, satisfying
\begin{enumerate}
\item $x\cdot x \approx x$,
\item $x\cdot y \approx y\cdot x$, and
\item $x\cdot(x\cdot y) \approx x\cdot y$.
\end{enumerate}
We call an algebra a $2$-semilattice if its only basic operation is a $2$-semilattice operation. The origin of $2$-semilattices seems somewhat mysterious. They were mentioned by Quackenbush in 1995 in \cite{quackenbush}, where he alluded to \cite{jezekquack} which is an earlier paper about the variety of directoids. The variety of directoids is strictly between the variety of semilattices and the variety of $2$-semilattices, but \cite{jezekquack} does not include the word ``$2$-semilattice" anywhere. Both \cite{jezekquack} and \cite{quackenbush} are about minimal clones. It seems that some of the earliest interest in $2$-semilattices was because of their relevance in clone theory.

The Maltsev product of two varieties is an idea introduced by Maltsev in \cite{maltsev}. The following is less general than Maltsev's original definition.

\begin{dfn}
Let $\cc{A}$ and $\cc{B}$ be idempotent varieties of the same type. The Maltsev product of the two varieties, denoted $\cc{A}\circ\cc{B}$, is the class of all idempotent algebras which have a congruence whose classes (as subalgebras) are all in $\cc{A}$, and the quotient by it is in $\cc{B}$. If $\alga$ is an algebra and $\theta\in\conn(\alga)$ has these properties, we will say that it witnesses $\alga\in\cc{A}\circ\cc{B}$.
\end{dfn}

Maltsev's original definition did not require the classes to be varieties and made no mention of idempotence. Idempotence guarantees that congruence classes are subuniverses, so it makes the definition easier to work with.

To finish off this section, we will state, without proof, a deep result of Barto and Kozik which first appeared as Theorem~2.3 in \cite{bkcyclic}. Before stating Theorem~\ref{absorptiontheorem}, commonly known as the Absorption Theorem, we need to introduce some terminology.

Taylor operations, which came up in the statement of the algebraic dichotomy conjecture, were first described by Taylor in \cite{taylor}. We now define them precisely.

\begin{dfn}\label{taylordfn}
Let $A$ be a set and $n\geq 2$. A Taylor operation of arity $n$ is an idempotent operation $t:A^n\to A$ which, for each $i\leq n$, satisfies an identity $t(x_1,x_2,\dots,x_n)\approx t(y_1,y_2,\dots,y_n)$ where $x_k,y_k\in\{x,y\}$ and $x_i\neq y_i$. We say that $\alga$ has a Taylor term operation if there is some term, $t$, in its similarity type such that $t^{\alga}$ is a Taylor operation on $A$. We say that a term, $t$, is a Taylor term for a variety, $\cc{V}$, if it is idempotent in $\cc{V}$ and there is a set of identities of the kind described above that hold in $\cc{V}$.
\end{dfn}

The next definition is that of an absorbing subuniverse. Examples of absorbing subuniverses are ideals of rings without 1 and every singleton subuniverse of a lattice.

\begin{dfn}\label{absorptiondfn}
Let $\alga$ be an algebra and $B\leq \alga$ be a nonempty subuniverse of $\alga$. We say that $B$ is absorbing if there is an $n$-ary term operation, $f$ of $\alga$ with $n\geq 2$ such that $f(b_1,\dots,b_n)\in B$ whenever at least $n-1$ of the $b_i$ are in $B$. When $B$ is an absorbing subuniverse of $\alga$, we write $B\vartriangleleft \alga$.
\end{dfn}

Of course, the set $A$ is an absorbing subuniverse of $\alga$, so we call $B$ a proper absorbing subuniverse if it is not all of $A$. As mentioned, every singleton subset of a lattice is absorbing. If $\algl$ is a bounded lattice with bottom element $0$ and $|L|\geq 2$, then $\{0\}$ is a proper absorbing subuniverse with respect to the term $\meet$. Similarly, the top element is absorbing with respect to $\join$. In fact, every singleton subuniverse is absorbing with respect to the term operation $(x\meet y)\join(y\meet z)\join(z\meet x)$. We say that an algebra $\alga$ is absorption free if it has no proper absorbing subuniverse.

Let $\alga$ and $\algb$ be finite algebras with $R\leqsd\alga\times\algb$. We can visualize $R$ as a bipartite graph whose vertex set is $A\cup B$, and which has an edge between $a$ and $b$ exactly when $(a,b)\in R$. We say that $R$ is linked when this graph is connected.

Now we state the Absorption Theorem of Barto and Kozik.

\begin{thm}[Theorem 2.3 in \cite{bkcyclic}]\label{absorptiontheorem}
Suppose $\alga$ and $\algb$ are finite, absorption free, and in an idempotent variety with a Taylor term. If $R\leqsd\alga\times\algb$ is linked, then $R=A\times B$.
\end{thm}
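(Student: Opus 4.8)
The plan is to establish a stronger dichotomy and then specialise: if $\algr\leqsd\alga\times\algb$ is linked and $\algr\neq A\times B$, then $\alga$ or $\algb$ has a proper absorbing subuniverse. Granting this, the hypotheses of the theorem --- that $\alga$ and $\algb$ are both absorption free --- rule out the second alternative and force $\algr=A\times B$. I would prove the dichotomy by contradiction: among all counterexamples $(\alga,\algb,\algr)$ (with $\alga,\algb$ finite, absorption free, in a fixed idempotent variety $\cc{V}$ with a Taylor term, $\algr\leqsd\alga\times\algb$ linked, and $\algr\neq A\times B$), choose one with $|A|+|B|$ least, and among those one with $|\algr|$ least.

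The engine of the argument is the Cyclic Terms Theorem of Barto and Kozik from \cite{bkcyclic}: a finite idempotent algebra lies in a variety with a Taylor term if and only if, for every prime $p$ larger than its cardinality, it has a $p$-ary term $c$ satisfying $c(x_1,x_2,\dots,x_p)\approx c(x_2,x_3,\dots,x_p,x_1)$. Fix a prime $p>\max\{|A|,|B|\}$ and a cyclic term $c$ of arity $p$ holding in $\cc{V}$. The point of $c$ is that it closes up configurations that are invariant under cyclic rotation: applied coordinatewise to the $p$ rotations of such a configuration it returns an output all of whose coordinates agree, and the inputs can be arranged so that all but one coordinate of each lies in a prescribed subuniverse. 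This is the mechanism that converts connectivity data --- a path or cycle witnessing that $\algr$ is linked --- into an absorption identity.

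The induction then runs roughly as follows. First, minimality disposes of the easy cases: if some neighbourhood $\algr_b=\{a:(a,b)\in\algr\}$ or $\algr^a=\{b:(a,b)\in\algr\}$ carries a strictly smaller linked subdirect sub-relation, apply the induction hypothesis there and transport the resulting absorbing subuniverse back to $\alga$ or $\algb$, using the standard facts that absorption is transitive, that $B_1\vartriangleleft\algb$ implies $\algr\cap(A\times B_1)\vartriangleleft\algr$ via the same term, and that a projection of an absorbing subuniverse of a subdirect product absorbs the corresponding factor. Similarly one may replace $\algr$ by $\algr\circ\algr^{-1}\circ\algr$, or pass to a quotient by a suitable congruence, without destroying linkedness, and again invoke minimality. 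After these reductions one is left with a rigid configuration in which, fixing $a\in A$ with $\algr^a\subsetneq B$, one builds a closed walk through the bipartite graph of $\algr$ based at $a$ whose length is a multiple of $p$, and feeds its cyclic rotations through $c$: cyclicity collapses the $B$-side output to a single element while keeping all but one coordinate of each input inside $\algr^a$, which exhibits $\algr^a\vartriangleleft\algb$, contradicting that $\algb$ is absorption free. Since $a$ was an arbitrary element whose $\algr$-neighbourhood is not all of $B$, this gives $\algr=A\times B$.

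The main obstacle is precisely that last extraction step. A single pass of $c$ along one walk does not by itself produce the ``at least $n-1$ entries in the subuniverse'' form of an absorbing term, so in the genuine argument one must combine several walks, keep careful track of which coordinates remain inside the target neighbourhood, and handle the degenerate possibility that all relevant neighbourhoods are already full by pushing the problem onto $\algr\circ\algr^{-1}$, onto $\algr^{-1}\circ\algr$, or onto a quotient, each time re-invoking minimality to clear the resulting sub-instance. I also expect to need a small preparatory toolkit on how absorbing subuniverses behave under subdirect products, composition of binary relations, and quotients before the induction can be made to close.
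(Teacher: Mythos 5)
The paper does not prove this statement: it is quoted verbatim as Theorem~2.3 of \cite{bkcyclic} and explicitly introduced with ``we will state, without proof, a deep result of Barto and Kozik.'' It is used throughout as a black box (e.g.\ in Lemma~\ref{edgerelations}), so there is no in-paper argument to compare yours against; the appropriate ``proof'' here is simply the citation.

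Taken on its own terms, your sketch is not yet a proof, and it has one structural problem worth flagging. First, you concede the decisive step yourself: a single application of the cyclic term $c$ along one closed walk does not produce the ``all but one coordinate in the subuniverse'' pattern required by Definition~\ref{absorptiondfn}, and the reductions (passing to $\algr\circ\algr^{-1}\circ\algr$, to neighbourhoods, to quotients, transporting absorption back and forth) are only named, not carried out; these are precisely the places where the genuine argument is long and delicate. Second, and more seriously, your engine is the Cyclic Terms Theorem, but in \cite{bkcyclic} that theorem is \emph{derived from} the Absorption Theorem, not the other way around. Using cyclic terms to prove absorption is therefore circular relative to the cited source unless you supply an independent proof that finite idempotent Taylor algebras have cyclic terms of all large prime arities --- itself a result of comparable depth. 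The actual proof of the Absorption Theorem works directly from a Taylor term via a minimal-counterexample induction on $|A|+|B|$ and a careful analysis of the linking congruence, manufacturing the absorbing term by iterated composition rather than by appeal to cyclicity. If you want to include an argument at this point in the paper, the honest options are to reproduce that argument in full or, as the author does, to cite it.
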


\section{Constraint satisfaction problems}\label{cspsection}
In this section, we will give precise definitions regarding constraint satisfaction problems.

\subsection{Three versions of $\csp$}\label{cspsec1}
\begin{dfn}\label{cspdef}\hspace{1cm}
\begin{enumerate}
\item An instance is a triple $\cc{I}=(X,D,\cc{C})$ where $X$ is a finite set of variables, $D$ is a finite set, and $\cc{C}$ is a finite set of constraints. A constraint, $C\in\cc{C}$ is a pair $({\bf x},R)$ where ${\bf x}\in X^n$ and $R\subseteq D^n$ for some $n$.
\item A solution to an instance is a function $\varphi:X\to D$ with  $(\varphi(x_1),\dots,\varphi(x_n))\in R$ for every $({\bf x},R)\in\cc{C}$.
\item The constraint satisfaction problem, abbreviated $\csp$, is the decision problem whose input is an instance, $\cc{I}$, and output is YES if a solution exists, and NO otherwise.
\end{enumerate}
\end{dfn}

It's not hard to see that $\csp$ is in ${\bf NP}$, and it was mentioned in the introduction that graph $3$-colourability can be encoded in this way. Since $3$-colourability is known to be ${\bf NP}$-complete, it follows that $\csp$ is ${\bf NP}$-complete. The question of complexity is more interesting for versions of $\csp$ where the allowed instances are restricted.

\begin{dfn}
A relational structure is a pair $\bb{D}=(D,\cc{R})$ where $D$ is a set and $\cc{R}$ is a set of relations on $D$. We say $\bb{D}$ is finite if $D$ is finite, and has finite type if $\cc{R}$ is finite.
\end{dfn}

With this in mind, we can define a less general version of $\csp$.

\begin{dfn}
Fix a finite relational structure, $\bb{D}=(D,\cc{R})$, of finite type. The decision problem $\csp(\bb{D})$ is the same as the general $\csp$, except the input is restricted to instances, $\cc{I}=(X,D,\cc{C})$ where $D$ is the domain of $\bb{D}$, and for each constraint, $({\bf x},R)\in\cc{C}$, the relation $R$ is in $\cc{R}$.
\end{dfn}

While the general $\csp$ is easily seen to be ${\bf NP}$ complete, more interesting results are known about this restricted $\csp$. For example, Schaeffer proved in \cite{schaefer} that if $\bb{D}=(D,\cc{R})$ has $|D|=2$, then $\csp(\bb{D})$ is either in ${\bf P}$ or is ${\bf NP}$-complete. A similar result was obtained by Hell and Ne\v{s}et\v{r}il in \cite{hell-nesetril} when $\cc{R}$ consists of a single binary relation which is the edge relation of a simple graph (symmetric and irreflexive). This version of $\csp$ was the subject of the dichotomy conjecture of Feder and Vardi: For a finite relational structure, $\bb{D}$, of finite type, either $\csp(\bb{D})$ is in ${\bf P}$ or it is ${\bf NP}$-complete.

We have two versions of $\csp$ so far: the general $\csp$ and $\csp(\bb{D})$ for a fixed relational structure, $\bb{D}$. The next one is a special case of $\csp(\bb{D})$.

\begin{dfn}\label{algcsp}
Fix a finite idempotent algebra, $\algd$, and a positive integer, $n$. Define $\cc{R}_n(\algd)=\{A:A\leq \algd^m\text{ for some }m\leq n\}$. The decision problem $\csp(\algd,n)$ is the decision problem $\csp(\bb{D})$ where $\bb{D}=(D,\cc{R}_n)$.
\end{dfn}

From Definition~\ref{algcsp}, we can now precisely define what it means for an algebra to be tractable.

\begin{dfn}\label{tractabledfn}
A finite idempotent algebra is tractable if for every $n\geq 2$, there is a polynomial time algorithm which solves $\csp(\algd,n)$.
\end{dfn}

The algebraic dichotomy conjecture from \cite{bjk} states that if an idempotent algebra, $\algd$, has a Taylor operation, then it is tractable. We give a brief explanation as to why this conjecture implies the dichotomy conjecture of Feder and Vardi from \cite{feder-vardi}. For a more detailed explanation of this, see \cite{bjk}.

For any finite relational structure, $\bb{D}$, there is known to be another relational structure, $\bb{D}'$ which is a ``core", so that $\csp(\bb{D})$ and $\csp(\bb{D}')$ reduce to one another in polynomial time. The structure, $\bb{D}'$ is essentially unique, and if $\bb{D}$ is already a core, then $\bb{D}=\bb{D}'$. For a relational structure, $\bb{D}=(D,\cc{R})$, we denote by $\bb{D}^{\text{c}}$ the structure $(D,\cc{R}\cup\{\{c\}:c\in D\})$. If $\bb{D}$ is a core, then $\csp(\bb{D})$ and $\csp(\bb{D}^{\text{c}})$ are polynomial time reducible to one another. It is at this point that we can pass to algebras. For a relation, $R$ on a set $D$, we say that a function $f:D^k\to D$ preserves $R$ if for any ${\bf x}_1,\dots,{\bf x}_k\in R$ we have $f({\bf x}_1,\dots,{\bf x}_k)\in R$ as well. In the previous sentence, $f$ is applied coordinate-wise. We call such $f$ a polymorphism of $\bb{D}$ if it preserves all of its relations. Notice that if $\bb{D}=\bb{D}^{\text{c}}$, then every polymorphism of $\bb{D}$ is idempotent. If we let $\algd$ be the algebra with domain $D$ and whose operations are the polymorphisms of $\bb{D}$, it can be shown that $\csp(\bb{D})$ and $\csp(\algd,n)$ are polynomial time equivalent where $n$ is the maximum arity of the relations in $\bb{D}$. To summarize, if we start with any finite relational structure of finite type, $\bb{D}$, there is a finite idempotent algebra, $\algd$, and an integer $n\geq 2$ such that $\csp(\bb{D})$ is polynomial time equivalent to $\csp(\algd,n)$. In the case that $\algd$ has no Taylor operation, Bulatov, Jeavons, and Krokhin showed in \cite{bjk} that $\csp(\algd,n)$ is ${\bf NP}$-complete for all $n\geq 2$, so $\csp(\bb{D})$ must be ${\bf NP}$-complete. Their conjecture says that if $\algd$ has a Taylor operation, then $\csp(\algd,n)$ has a polynomial time algorithm for all $n\geq 2$. This would imply that $\csp(\bb{D})$ has a polynomial time algorithm, as well.

\subsection{$(2,3)$-consistency}
We now introduce a subproblem of $\csp(\algd,2)$. After this section, it will be the only version of $\csp$ dealt with in this paper. No generality is lost by considering only this sub-problem. The following proposition is stated without proof, but a proof can be found in \cite{bkbounded}.

\begin{prop}\label{reducetotwo}
Let $\algd$ be a finite idempotent algebra and $n\geq 2$ be an integer. Then $\csp(\algd,n)$ is polynomial time equivalent to $\csp(\algd^{\lceil\frac{n}{2}\rceil},2)$.
\end{prop}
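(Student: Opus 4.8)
The plan is to exhibit polynomial-time reductions in both directions between $\csp(\algd,n)$ and $\csp(\algd^{\lceil n/2\rceil},2)$. The easy direction is that $\csp(\algd^{\lceil n/2\rceil},2)$ reduces to $\csp(\algd,n)$: any relation $S\leq(\algd^{\lceil n/2\rceil})^2$ is, after reshuffling coordinates, a subalgebra of $\algd^{m}$ for some $m\leq 2\lceil n/2\rceil$, hence of $\algd^{m'}$ for $m'\le n$ when $n\ge 2$ (note $2\lceil n/2\rceil\le n+1$, so one must be slightly careful for odd $n$; this is handled by padding with a dummy coordinate, which is harmless since constants are available in an idempotent algebra). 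Thus an instance of $\csp(\algd^{\lceil n/2\rceil},2)$ is literally an instance of $\csp(\algd,n)$ after regrouping variables into blocks of size $\lceil n/2\rceil$.

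The substantive direction is the reduction of $\csp(\algd,n)$ to $\csp(\algd^{k},2)$ where $k=\lceil n/2\rceil$. Given an instance $\cc{I}=(X,D,\cc{C})$ of $\csp(\algd,n)$, first I would preprocess so that every constraint has arity exactly $k$ or exactly $2k$: a constraint of arity $r\le k$ is padded up to arity $k$ by repeating a variable (this keeps the relation a subpower of $\algd$), and a constraint of arity $r$ with $k<r\le n\le 2k$ is padded up to arity exactly $2k$ the same way. Now introduce, for each (unordered) $k$-element multiset of variables of $X$ that occurs as the support of some constraint, a single new ``super-variable'' ranging over $D^{k}$; call the resulting variable set $Y$. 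Each original arity-$k$ constraint becomes a unary constraint on one super-variable (a unary constraint being encodable as a binary one on a fixed pair), and each arity-$2k$ constraint, whose scope splits into two $k$-blocks, becomes a binary constraint between the two corresponding super-variables, with constraint relation the original relation viewed inside $\algd^{k}\times\algd^{k}=(\algd^{k})^{2}$. Finally, for every pair of super-variables that share original variables, add a binary ``equality'' constraint forcing the shared coordinates to agree; such an equality relation is a subalgebra of $(\algd^{k})^{2}$ because $\algd$ is idempotent (the diagonal on the shared part, all of $D$ on the rest). This yields an instance $\cc{I}'$ of $\csp(\algd^{k},2)$ whose size is polynomial in that of $\cc{I}$, and the consistency constraints guarantee that solutions of $\cc{I}'$ correspond exactly to solutions of $\cc{I}$.

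The main obstacle, and the point requiring the most care, is the bookkeeping that makes the equality/consistency gadgets work: one must be sure that every variable of $X$ appears in at least one chosen $k$-block and that the network of shared-coordinate equality constraints is strong enough that a solution of $\cc{I}'$ induces a single well-defined value in $D$ for each $x\in X$. If some variable appears only in low-arity constraints, padding its constraints and, if necessary, adding a trivial constraint on a block containing it fixes this. One also checks that all the relations produced — padded originals, the ``viewed-as-binary'' relations, and the equality gadgets — are genuinely subuniverses of the appropriate power of $\algd$, which is where idempotence is used. Once these verifications are in place, both reductions are plainly computable in polynomial time, and since a proof along these lines is given in \cite{bkbounded}, I would at this point simply cite it rather than belabour the routine details.
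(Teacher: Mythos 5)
The paper does not actually prove this proposition---it is stated without proof and delegated to \cite{bkbounded}---so there is no in-paper argument to compare against. Your substantive direction, $\csp(\algd,n)\leq_p\csp(\algd^{\lceil n/2\rceil},2)$ via padding constraints to arity $k$ or $2k$, introducing block super-variables, and wiring overlapping blocks together with coordinate-agreement constraints, is the standard reduction and is correct in outline. (Two small points: the diagonal and the variable-repetition paddings are subuniverses of the relevant powers for \emph{any} algebra, not just idempotent ones, so idempotence is not really what is being used there; and unary constraints are already permitted in $\csp(\algd^k,2)$ by Definition~\ref{algcsp}, so no binary encoding of them is needed.)

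There is, however, a genuine gap in what you call the easy direction when $n$ is odd. A binary relation on $D^{k}$ with $k=\lceil n/2\rceil$ is a subalgebra of $\algd^{2k}$, and for odd $n$ we have $2k=n+1>n$, so its arity over $D$ \emph{exceeds} the bound allowed in $\csp(\algd,n)$. Your proposed fix, ``padding with a dummy coordinate,'' goes in the wrong direction: padding only increases arity, whereas here you would need to \emph{lower} it from $n+1$ to $n$, and an $(n+1)$-ary subpower of $\algd$ cannot in general be rewritten as a conjunction of $\le n$-ary subpowers over the same domain (this is precisely the obstruction that forces one to pass to powers of $\algd$ in the first place). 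So for odd $n$ this direction needs an actual argument rather than a parenthetical. Note that the paper only ever uses the forward direction---Proposition~\ref{reducetotwo} serves to justify restricting attention to binary instances, and tractability in Definition~\ref{tractabledfn} quantifies over all $n$, so the forward reductions suffice for everything that follows---but if you claim the two-way equivalence as stated, the odd case of the converse is where the real work (or the appeal to \cite{bkbounded}) has to happen.
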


Because of Proposition~\ref{reducetotwo}, we will only consider instances of $\csp(\algd,2)$. Before proceeding, we introduce the notation $R^{-1}$ to denote the binary relation $\{(b,a):(a,b)\in R\}$ for a binary relation, $R$.

\begin{dfn}\label{ourcsp}
Let $\algd$ be a finite idempotent algebra and define $\csp(\algd)$, a subproblem of $\csp(\algd,2)$ as follows: We restrict inputs, $(X,D,\cc{C})$, to have the property that $\cc{C}=\{(x,P_x):x\in X\}\cup\{((x,y):R_{x,y}):(x,y)\in X\times X\}$ for some subuniverses, $P_x$ and $R_{x,y}$ of $D$ and $D\times D$, respectively, so that $R_{x,y}\subseteq P_x\times P_y$ for every $(x,y)\in X\times X$. We will refer to an instance as $\cc{I}=(X,\cc{P},\cc{R})$ where, $\cc{P}=(P_x:x\in X)$ and $\cc{R}=(R_{x,y}:(x,y)\in X^2)$. That is, we identify it by its set $\cc{P}$ of {\it potatoes} and $\cc{R}$ of {\it relations}. We call $\cc{I}$ a \adj{} if it also satisfies the following four conditions.
\begin{enumerate}
\item[(P1)] For each $x\in X$, $R_{x,x}=0_{P_x}$,
\item[(P2)] For $x,y,z\in X$ and any $(a,b)\in R_{x,y}$, there is a $c\in P_z$ such that $(a,c)\in R_{x,z}$ and $(b,c)\in R_{y,z}$,
\item[(P3)] For each $x,y\in X$, $R_{x,y}\leqsd\algp_x\times\algp_y$ if $P_x$ and $P_y$ are both non-empty.
\item[(P4)] $R_{y,x}=R_{x,y}^{-1}$ for each $x,y\in X$.
\end{enumerate}
\end{dfn}

We note that the set of solutions to a \adj{}, if nonempty, can be identified as a subuniverse of the product $\prod_{x\in X}\algp_x$. For this reason, when $\cc{I}$ has at least one solution, we will sometimes refer to its algebra of solutions and use the fact that it is in the variety generated by $\algd$. It is not hard to see that for a \adj{}, there is an empty potato if and only if all potatoes (and hence, relations) are empty. We call such an instance empty. It is worth noting that (P3) and (P4) follow from (P1) and (P2). Therefore, to prove an instance is a \adj{}, we need only verify (P1) and (P2). We leave (P3) and (P4) in the definition because they are part of the intuition behind \adj s and we would need to derive them for later use anyway.

The $(2,3)$-consistency checking algorithm (Algorithm~\ref{23minalg}) takes any instance of $\csp(\algd,2)$ as input and outputs a \adj{} of $\csp(\algd)$. The output \adj{} has the same solutions as the input instance. It's not hard to see that $(2,3)$-consistency checking runs in polynomial time. For more explanation on this, see \cite{bulatov1}, \cite{feder-vardi}, or \cite{larose-zadori}. Because of this, if we wish to find a polynomial time algorithm for $\csp(\algd,2)$, it is enough to find one for $\csp(\algd)$ with inputs restricted to \adj s.

From now on we will write $\algr_{xy}$ and $R_{xy}$ (omitting the comma) when talking about relations. When two instances, $\cc{I}$ and $\cc{J}$ have the same set of variables, $X$, we use a superscript to indicate the instance to which a potato or relation belongs. When $\algp_{x}^{\cc{J}}\leq\algp_{x}^{\cc{I}}$ and $\algr_{xy}^{\cc{J}}\leq\algr_{xy}^{\cc{I}}$ for every $x,y\in X$, we say that $\cc{J}$ is a subinstance of $\cc{I}$.

\begin{algorithm}
\caption{$(2,3)$-consistency checking}\label{23minalg}
\begin{algorithmic}[1]
\State {\bf Input}: An instance, $\cc{I}=(X,D,\cc{C})$ of $\csp(\algd,2)$
\For {$(x,y)\in X^2$}
\If {$x\neq y$}
\State $R_{xy} \gets D\times D$
\Else
\State $R_{xy} \gets 0_D$
\EndIf
\EndFor

\For {$((x,y),R)\in\cc{C}$}
\State $R_{xy} \gets R_{xy}\cap R$
\State $R_{yx} \gets R_{yx}\cap R^{-1}$
\EndFor

\State $\text{flag} \gets 1$
\While {$\text{flag} = 1$}
\State $\text{flag} \gets 0$
\For {$(x,y)\in X^2$}
\For {$z\in X$}
\For {$(a,b)\in R_{xy}$}
\If {there is no $c\in D$ with $(a,c)\in R_{xz}$ and $(b,c)\in R_{yz}$}
\State $R_{xy} \gets R_{xy}\setminus\{(a,b)\}$
\State $R_{yx} \gets R_{yx}\setminus\{(b,a)\}$
\State $\text{flag} \gets 1$
\EndIf
\EndFor
\EndFor
\EndFor
\EndWhile

\For {$x\in X$}
\State $P_x=\pr_1(R_{xx}))$
\EndFor

\State {\bf Output}: $(X,(P_x:x\in X),(R_{xy}:(x,y)\in X^2))$
\end{algorithmic}
\end{algorithm}

\begin{prop}
The output of Algorithm~\ref{23minalg} is a \adj{} of $\csp(\algd)$ with exactly the same set of solutions as the input instance.
\end{prop}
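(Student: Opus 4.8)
The plan is to verify three things about the output instance $\cc{J}=(X,(P_x:x\in X),(R_{xy}:(x,y)\in X^2))$ of Algorithm~\ref{23minalg}: that the algorithm halts, that $\cc{J}$ has exactly the same solutions as the input instance $\cc{I}$, and that $\cc{J}$ is a genuine \adj{} of $\csp(\algd)$. For the last of these, by the remark following Definition~\ref{ourcsp} it is enough to establish (P1) and (P2), provided we also confirm what is built into $\csp(\algd)$ itself, namely that each $P_x$ is a subuniverse of $\algd$, each $R_{xy}$ a subuniverse of $\algd^2$, and $R_{xy}\subseteq P_x\times P_y$. I expect the subuniverse claim to be the only point that is not purely formal.

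\textbf{Halting and solutions.} Each $R_{xy}$ stays inside $D\times D$, pairs are only ever deleted from it, and the flag is set to $1$ only when a deletion actually occurs; since $X$ and $D$ are finite, the main loop iterates only finitely often. For the solutions I would argue by induction along the run that every solution $\varphi$ of $\cc{I}$ satisfies $(\varphi(x),\varphi(y))\in R_{xy}$ at all times: this holds after initialization (when $R_{xy}$ is $D\times D$ or $0_D$), it survives the intersections with constraint relations $R$ and $R^{-1}$ because $\varphi$ is a solution, and in the propagation loop $(\varphi(x),\varphi(y))$ is never deleted because $c:=\varphi(z)$ is a valid witness, by the inductive hypothesis applied to $R_{xz}$ and $R_{yz}$. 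Hence $\varphi(x)\in\pr_1(R_{xx})=P_x$, so $\varphi$ solves $\cc{J}$. Conversely, for each constraint $((x,y),R)$ of $\cc{I}$ the final $R_{xy}$ is contained in $R$ by the second loop and monotonicity, so every solution of $\cc{J}$ solves $\cc{I}$; the two solution sets coincide.

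\textbf{Conditions (P1)--(P4).} Condition (P4), $R_{yx}=R_{xy}^{-1}$, is an invariant of the run, since initialization is symmetric and every intersection and every deletion is performed simultaneously on $R_{xy}$ and $R_{yx}$. For (P1), $R_{xx}$ starts as $0_D$ and only shrinks, so it ends equal to $0_S$ with $S=\pr_1(R_{xx})=P_x$. For (P2), when the main loop terminates no further deletion is possible, i.e.\ for every $x,y,z$ and every $(a,b)\in R_{xy}$ there is a $c\in D$ with $(a,c)\in R_{xz}$ and $(b,c)\in R_{yz}$; to upgrade $c\in D$ to $c\in P_z$, apply this at the triple $(x,z,z)$ to the pair $(a,c)$ and use that $R_{zz}$ is diagonal, giving $(c,c)\in R_{zz}$ and hence $c\in P_z$. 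For (P3), applying (P2) at the triples $(x,y,x)$ and $(x,y,y)$ and using that $R_{xx}$ and $R_{yy}$ are diagonal gives $R_{xy}\subseteq P_x\times P_y$, while applying (P2) at the triple $(x,x,y)$ to a pair $(a,a)\in R_{xx}$ shows $\pr_1(R_{xy})\supseteq P_x$ (and symmetrically for the second coordinate, via (P4)); thus $R_{xy}\leqsd\algp_x\times\algp_y$ whenever $P_x$ and $P_y$ are nonempty.

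\textbf{The main obstacle: preserving subuniverses.} The one step needing care is that the output relations are subuniverses of the appropriate powers of $\algd$, as $\csp(\algd)$ requires. The initial relations $D\times D$ and $0_D$ are subuniverses, and each constraint relation lies in $\cc{R}_2(\algd)$ by Definition~\ref{algcsp}, so after the intersection steps every $R_{xy}$ is a subuniverse of $\algd^2$. For the propagation loop I would show that one complete pass of its innermost loop over a fixed triple $(x,y,z)$ replaces $R_{xy}$ by the projection onto the first two coordinates of $T=\{(a,b,c):(a,b)\in R_{xy},\ (a,c)\in R_{xz},\ (b,c)\in R_{yz}\}$, which is a subuniverse of $\algd^3$ as an intersection of preimages of subuniverses under coordinate projections; the projection of a subuniverse being a subuniverse, $R_{xy}$ stays a subuniverse. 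Because the algorithm deletes pairs one at a time rather than all at once, I would also record the small case check that makes this harmless: if $z\in\{x,y\}$ no deletion can occur (using that $R_{zz}$ is diagonal and, for $z=x$, that $R_{yx}=R_{xy}^{-1}$), and if $z\notin\{x,y\}$ then the relations $R_{xz},R_{yz}$ consulted by the test are not altered inside that loop, so the set of surviving pairs is exactly the projection of $T$ regardless of deletion order. Since each such pass, and each intersection, preserves being a subuniverse, so does the entire run, and $P_x=\pr_1(R_{xx})$ is then a subuniverse of $\algd$. Assembling the three parts proves the proposition.
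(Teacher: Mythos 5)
The paper does not actually supply a proof of this proposition (it is stated after the remark that the claim ``is not hard to see,'' with pointers to \cite{bulatov1}, \cite{feder-vardi}, and \cite{larose-zadori}), so there is nothing internal to compare against; your write-up is the standard argument, and its overall structure --- halting, a run-invariant showing solutions are preserved in both directions, deriving (P1)--(P4) from the termination condition, and tracking closure under the operations of $\algd$ through each modification step --- is sound and complete enough to stand as a proof.

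One parenthetical claim is false, though the conclusion it supports survives. You assert that when $z\in\{x,y\}$ no deletion can occur in the propagation loop. For $z=x$ the test for $(a,b)\in R_{xy}$ asks for $c$ with $(a,c)\in R_{xx}$ and $(b,c)\in R_{yx}$; since $R_{xx}$ is diagonal this forces $c=a$, so the test succeeds iff $(a,a)\in R_{xx}$. Mid-run this can fail: $(a,a)$ may already have been deleted from $R_{xx}$ (say while processing a triple $(x,x,w)$ with $a\notin\pr_1(R_{xw})$) while $(a,b)$ still survives in $R_{xy}$, and then the pass over $(x,y,x)$ does delete $(a,b)$. The repair is one line: because deletions from $R_{xy}$ and $R_{yx}$ are synchronized, the net effect of the pass with $z=x$ is to replace $R_{xy}$ by $R_{xy}\cap\bigl(\pr_1(R_{xx})\times D\bigr)$, and with $z=y$ by $R_{xy}\cap\bigl(D\times\pr_1(R_{yy})\bigr)$; both are intersections of subuniverses of $\algd^2$, so subuniverses are still preserved and the outcome is still order-independent. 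With that substitution your subuniverse-preservation argument is correct in all cases. A smaller presentational point: the input is an instance of $\csp(\algd,2)$, whose constraints may be unary; as written, both the algorithm's constraint loop and your solution-equivalence argument silently assume all constraints are binary, so you should either say that unary constraints $(x,R)$ are first recoded as $((x,x),0_R)$ or note that this is how the paper intends them to be read.
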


\section{$2$-semilattices and Maltsev Products}\label{varsection}
We begin this section with a basic result on $2$-semilattices. It is really a list of observations about a useful digraph structure that a finite $2$-semilattice admits. Most of Lemma~\ref{digraphproperties} appeared either implicitly or explicitly at some point in \cite{bulatov1}. The variety of $2$-semilattices, defined in Section~\ref{algebrasection}, has only a binary operation symbol, $\cdot$, in its similarity type. We denote this variety by $\cc{S}$.

\begin{dfn}\label{digraphdfn}
For each $\alga\in\cc{S}$, define a digraph relation on $A$ by $a\overset{\alga}{\dig} b$ if $a\cdot b=b$. Since the digraph relation depends on the operation, we indicate the algebra above the arrow. However, we will omit this whenever the algebra is clear from context.
\end{dfn}

\begin{lem}\label{digraphproperties}
Let $\alga\in\cc{S}$ be finite. The following hold for the digraph $(A,\overset{\alga}{\dig})$.
\begin{enumerate}
\item For any $a,b\in A$, $a\dig a$, $a\dig a\cdot b$ and $b\dig a\cdot b$.
\item If the strongly connected components are quasi-ordered by $U\geq V$ iff $u\dig v$ for some $u\in U$ and $v\in V$, there is a unique minimal component, $A'$. This component has the property that for any $b\in A$ there is $a\in A'$ such that $b\dig a$.
\item With $A'$ as above, $a\in A'$ if and only if for every $b\in A$ there is a directed walk from $b$ to $a$.
\item $A'$ is an absorbing subuniverse of $\alga$ with respect to $\cdot$.
\item If $a,b\in A$ with $a\dig b$, then $\la\{a,b\};\cdot\ra$ is a semilattice with absorbing element $b$.
\item Let $\alpha$ be a congruence on $\alga$. If $(A,\overset{\alga}{\dig})$ is strongly connected, then $(A/\alpha,\overset{\alga/\alpha}{\dig})$ is strongly connected.
\end{enumerate}
\end{lem}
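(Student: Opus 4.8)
The plan is to verify the six claims more or less in order, leaning on the three defining identities of a $2$-semilattice operation: $x\cdot x\approx x$, $x\cdot y\approx y\cdot x$, and $x\cdot(x\cdot y)\approx x\cdot y$. Claim (1) is immediate: $a\cdot a=a$ gives $a\dig a$; then $a\cdot(a\cdot b)=a\cdot b$ says $a\dig a\cdot b$, and by commutativity $b\cdot(a\cdot b)=b\cdot(b\cdot a)=b\cdot a=a\cdot b$, so $b\dig a\cdot b$ as well. Claim (1) is the workhorse: it says $a\cdot b$ is a common ``upper bound'' (in the digraph sense) of $a$ and $b$, so every pair of vertices has a common out-neighbour reachable in one step.

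For claim (2), I would first note that the quasi-order on strongly connected components is well defined (standard digraph fact) and, being a finite poset after collapsing, has at least one minimal component. To see it is unique: given two minimal components $U$ and $V$, pick $u\in U$, $v\in V$, and consider $u\cdot v$. By (1), $u\dig u\cdot v$ and $v\dig u\cdot v$, so the component $W$ containing $u\cdot v$ satisfies $U\geq W$ and $V\geq W$; minimality forces $U=W=V$. The same argument shows that for any $b\in A$, $b\dig b\cdot a$ for $a\in A'$, and $b\cdot a$ lies in a component $\leq A'$, hence (by minimality of $A'$) in $A'$ itself; so there is $a'=b\cdot a\in A'$ with $b\dig a'$, which is the stated reachability property. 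Claim (3) is then a reformulation: if $a\in A'$ then for every $b$ there is $a'\in A'$ with $b\dig a'$ (a directed walk $b\to a'$), and since $a,a'$ lie in the same strongly connected component there is a directed walk $a'\to a$, giving a walk $b\to a$; conversely if every $b$ reaches $a$, then in particular the component of $a$ is $\leq$ every component, so it equals $A'$.

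Claim (4) is where I expect the main work. I want to show $A'\vartriangleleft\alga$ with respect to $\cdot$, i.e. if $a\in A'$ and $b\in A$ then $a\cdot b\in A'$ (and symmetrically $b\cdot a\in A'$, which is the same by commutativity). By claim (2)/(3), $a\cdot b$ lies in a component $\leq A'$ (since $a\dig a\cdot b$ puts the component of $a\cdot b$ below that of $a$, which is $A'$), so $a\cdot b\in A'$ by minimality of $A'$; in particular $A'$ is a subuniverse (closed under $\cdot$ applied to two of its own elements), and the absorption inequality holds with $n=2$. For claim (5): if $a\dig b$, i.e. $a\cdot b=b$, then on $\{a,b\}$ the operation is idempotent and commutative, and $a\cdot b=b=b\cdot b$ makes $b$ absorbing; associativity on this two-element set follows since every product of $a$'s and $b$'s that involves a $b$ collapses to $b$ (using $a\cdot b=b$ and idempotence repeatedly), so $\la\{a,b\};\cdot\ra$ is a semilattice with absorbing element $b$. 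Finally, claim (6): if $\alpha$ is a congruence and $(A,\dig)$ is strongly connected, take any $a/\alpha, b/\alpha\in A/\alpha$; strong connectivity gives a directed walk $a=c_0\dig c_1\dig\cdots\dig c_k=b$ in $A$, and since $c_i\cdot c_{i+1}=c_{i+1}$ implies $(c_i/\alpha)\cdot(c_{i+1}/\alpha)=c_{i+1}/\alpha$, this projects to a directed walk $a/\alpha\dig\cdots\dig b/\alpha$ in the quotient; hence $(A/\alpha,\dig)$ is strongly connected. The only real subtlety throughout is being careful that ``component $\leq A'$ implies equals $A'$'' genuinely uses minimality together with the one-step reachability from (1); everything else is a direct manipulation of the three identities.
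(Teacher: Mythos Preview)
Your proposal is correct and matches the paper's approach. The paper only writes out proofs for (2) and (3), and your arguments for those parts are essentially identical to the paper's: uniqueness of the minimal component via $u\cdot v$ being a common out-neighbour, and the reachability characterization of $A'$ via strong connectivity plus the one-step property from (2). Your additional proofs of (1), (4), (5), (6) are all correct and are the natural arguments the paper leaves to the reader.
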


We include a proof for (2) and (3)

\begin{proof}
\hspace{1cm}
\begin{enumerate}
\item[(2)] The relation $\psi=\{(a,b):a\text{ and }b\text{ are on a directed cycle}\}$ is an equivalence relation on $A$ whose equivalence classes are exactly the strongly connected components. Because of the way $\geq$ is defined, if $U\geq V_1\geq V_2\geq\cdots\geq V_n\geq U$, then $U=V_1=\cdots=V_n$. Therefore, by finiteness, there are minimal components in $(A,\dig)$. By minimality, any such component, $U$, has the property that if $u\in U$ and $u\dig v$, then $v\in U$. Suppose $U$ and $V$ are minimal components. Fix $u\in U$ and $v\in v$. From (1), $u\dig u\cdot v$ and $v\dig u\cdot v$. From the previous remark, we have $u\cdot v\in U\cap V$. Since $U$ and $V$ are classes of an equivalence relation, we get $U=V$. Now pick any $b\in A$ and $a'\in A'$ and set $a=b\cdot a'$. By the previous remark, $a\in A'$, and $b\dig a$ by (1).

\item[(3)] If $a\in A'$ and $b\in A$, there is $c\in A'$ such that $b\dig c$ by (2). Since $A'$ is strongly connected, there is a directed walk from $c$ to $a$, so there is a directed walk from $b$ to $a$. Conversely, suppose there is a directed walk from every vertex to $a$. In particular, for any $b\in A'$ there is a directed walk from $b$ to $a$. From the proof of (2), we have that any out-neighbour of a member of $A'$ is itself a member of $A'$, so $a$ is in $A'$.
\end{enumerate}
\end{proof}

The strongly connected component guaranteed by (2) will be referred to as the ``smallest strongly connected component" of $\alga$, and will be denoted by adding the superscript, $'$, to the universe as in the Lemma. We now continue with more facts about $2$-semilattices.

\begin{prop}
Every binary term, $t$ in the type of $\cc{S}$ which depends on both of its variables satisfies $\cc{S}\vDash t(x,y)\approx x\cdot y$.
\end{prop}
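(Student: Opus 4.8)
The plan is to induct on the number of operation symbols used to build the term $t$, using the three $2$-semilattice identities as rewriting rules. Since the type of $\cc{S}$ has the single binary symbol $\cdot$, any binary term $t(x,y)$ that depends on both variables is a nontrivial composition of copies of $\cdot$ applied to the variables $x,y$ (the projections $x$ and $y$ themselves do not depend on both variables, so the base case never arises among terms we must consider). I would first record that by idempotence (identity (1)) any subterm of the form $s\cdot s$ collapses to $s$, and by commutativity (identity (2)) we may freely reorder arguments of any $\cdot$. Hence, working modulo $\cc{S}$, every binary term in $x,y$ is equivalent to one of the four terms $x$, $y$, $x\cdot y$, or something I will show reduces to $x\cdot y$; the claim is that the last possibility is the only one when $t$ depends on both variables.

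The key step is a normal-form argument. Writing $t$ as a binary tree with leaves labelled by $x$ or $y$, I would argue that modulo commutativity the root is $t_1\cdot t_2$ where each $t_i$ is again a binary term in $x,y$. If $t$ depends on both variables then at least one of the $t_i$ — say by commutativity $t_1$ — has both $x$ and $y$ among its leaves; by induction $t_1\equiv_{\cc{S}} x\cdot y$. Now I split on $t_2$: if $t_2\equiv x$ then $t\equiv (x\cdot y)\cdot x \equiv x\cdot(x\cdot y)\equiv x\cdot y$ using commutativity and identity (3); symmetrically if $t_2\equiv y$; if $t_2\equiv x\cdot y$ then $t\equiv (x\cdot y)\cdot(x\cdot y)\equiv x\cdot y$ by idempotence; and if $t_2$ also depends on both variables, induction gives $t_2\equiv x\cdot y$ and we are in the previous case. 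In every case $t\equiv_{\cc{S}} x\cdot y$, completing the induction.

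The main subtlety — not really an obstacle but the point needing care — is the bookkeeping that justifies ``$t_i$ depends on both variables $\Rightarrow t_i\equiv x\cdot y$'' as a genuine induction: I must set up the induction on term complexity (number of occurrences of $\cdot$, or tree depth) so that each $t_i$ is strictly simpler than $t$, handle the small cases where $t_i$ is a single variable, and make sure identity (3), $x\cdot(x\cdot y)\approx x\cdot y$, is always applied in the orientation that shrinks the term after a commutativity step puts the repeated variable on the outside. One should also note the statement is about terms, so the identities (1)–(3) are available as formal consequences in the equational theory of $\cc{S}$; no appeal to concrete $2$-semilattices is needed. The argument is entirely syntactic and short once the induction is framed correctly.
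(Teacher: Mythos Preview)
Your approach is exactly the one the paper indicates (the paper merely says ``by induction on term height''), so the overall strategy is correct. There is, however, one small but genuine slip in your inductive step: the sentence ``If $t$ depends on both variables then at least one of the $t_i$ \dots\ has both $x$ and $y$ among its leaves'' is false. The term $t=x\cdot y$ itself is a counterexample, and more generally $t_1$ could be built entirely from $x$ while $t_2$ is built entirely from $y$. Your case split on $t_2$ then never fires, because you have already assumed $t_1\equiv x\cdot y$. The clean fix is to strengthen the inductive claim to ``every binary term in $x,y$ is $\cc{S}$-equivalent to one of $x$, $y$, or $x\cdot y$'' and induct on that: by the inductive hypothesis each of $t_1,t_2$ lands in $\{x,y,x\cdot y\}$ modulo $\cc{S}$, and the nine resulting products are each easily reduced to one of the three using identities (1)--(3). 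The desired statement then follows since a term depending on both variables cannot be equivalent to $x$ or to $y$.
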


\begin{proof}
The proof can be carried out by induction on term ``height", in the sense of \cite{bergman}.
\end{proof}

\begin{cor}\label{termequivcor}
Suppose $\cc{T}$ is an idempotent variety which is term equivalent to $\cc{S}$ and $*$ is the binary term of $\cc{T}$ which is the image of $\cdot$ under this term equivalence. If $t$ is a binary term in the similarity type of $\cc{T}$ which depends on both variables, then $\cc{T}\vDash t(x,y)\approx x*y$.
\end{cor}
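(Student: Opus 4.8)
The plan is to transport the preceding Proposition across the term equivalence. Recall that $\cc{T}$ being term equivalent to $\cc{S}$ provides a mutually inverse pair of interpretations (clone homomorphisms): a map $\tau$ sending each term in the type of $\cc{T}$ to a term in the type of $\cc{S}$ on the same variables, and a map $\sigma$ going the other way, such that both compositions are the identity up to the equational theories (i.e.\ $\cc{T}\vDash s\approx\sigma(\tau(s))$ and $\cc{S}\vDash u\approx\tau(\sigma(u))$), both maps commute with substitution of variables, and both preserve and reflect identities (so $\cc{T}\vDash s_1\approx s_2$ iff $\cc{S}\vDash\tau(s_1)\approx\tau(s_2)$, and dually). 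By hypothesis $*$ is the $\cc{T}$-term with $\sigma(x\cdot y)=x*y$.

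First I would fix a binary $\cc{T}$-term $t$ depending on both its variables and set $u:=\tau(t)$, a binary $\cc{S}$-term in the variables $x,y$. The first step is to verify that $u$ also depends on both variables. Note that, for a binary term, failing to depend on variable $1$ is equivalent to the identity $t(x,y)\approx t(y,y)$, and failing to depend on variable $2$ to $t(x,y)\approx t(x,x)$. If $\cc{S}\vDash u(x,y)\approx u(y,y)$ held, then applying $\sigma$ (which preserves identities and commutes with substitution) together with $\cc{T}\vDash t\approx\sigma(\tau(t))=\sigma(u)$ would give $\cc{T}\vDash t(x,y)\approx t(y,y)$, contradicting dependence of $t$ on variable $1$; the argument for variable $2$ is identical. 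Hence $u$ depends on both variables.

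Next I would apply the Proposition to $u$, obtaining $\cc{S}\vDash u(x,y)\approx x\cdot y$. Applying $\sigma$, and again using that it preserves identities and commutes with variable substitution, yields $\cc{T}\vDash\sigma(u)(x,y)\approx\sigma(x\cdot y)(x,y)$, that is $\cc{T}\vDash\sigma(u)(x,y)\approx x*y$. Since $\cc{T}\vDash t\approx\sigma(\tau(t))=\sigma(u)$, we conclude $\cc{T}\vDash t(x,y)\approx x*y$, as required.

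The only real subtlety — and the step I would be most careful about — is the bookkeeping around term equivalence: one must pin down the formalization of ``term equivalent'' (a clone isomorphism between the free algebras, or equivalently a pair of mutually inverse interpretations) and extract from it precisely the two facts used above, namely that it transports equational consequences in both directions and that it commutes with substitution of variables, so that ``depends on both variables'' is preserved. Granting those facts the argument is immediate; and since $\cc{S}$ has a single basic operation, $\tau$ and $\sigma$ can be written down explicitly if a fully concrete version of the proof is preferred.
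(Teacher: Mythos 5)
Your argument is correct and is precisely the intended route: the paper gives no explicit proof, but labels the statement a corollary of the preceding proposition, meaning it is to be obtained exactly by transporting that proposition across the term equivalence, as you do. Your careful handling of why dependence on both variables is preserved under the equivalence is the only point requiring any thought, and you handle it correctly.
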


The following is an observation of Ross Willard.

\begin{prop}\label{maltsevprodvariety}
Let $\cc{A}$ and $\cc{B}$ be idempotent varieties of the same type and suppose the following hold:
\begin{enumerate}
\item There is a binary term, $t$ in the type of $\cc{A}$ and $\cc{B}$ so that $\cc{A}\vDash t(x,y)\approx x$ and $\cc{B}\vDash t(x,y)\approx t(y,x)$.
\item $\cc{A}$ has an axiomatization consisting of at most $2$-variable identities.
\end{enumerate}
Then $\cc{A}\circ\cc{B}$ is a variety.
\end{prop}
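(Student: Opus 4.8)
To show that $\cc{A}\circ\cc{B}$ is a variety, by Birkhoff's theorem it suffices to show that the class is closed under subalgebras, homomorphic images, and products. The definition of $\cc{A}\circ\cc{B}$ already says the class consists of idempotent algebras of the common type, so closure under these operations must be established by tracking the witnessing congruence. So the strategy is: given $\algd\in\cc{A}\circ\cc{B}$ with witnessing congruence $\theta$, produce an appropriate witnessing congruence on a subalgebra, quotient, or product.

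First I would handle \textbf{products}. Suppose $(\algd_u : u\in U)$ are in $\cc{A}\circ\cc{B}$ with witnessing congruences $\theta_u$, and let $\algd = \prod_u \algd_u$. The natural candidate is $\theta = \prod_u \theta_u$ (i.e., $\sigma\mathrel{\theta}\tau$ iff $\sigma(u)\mathrel{\theta_u}\tau(u)$ for all $u$), which is a congruence on $\algd$. Its classes are products of $\theta_u$-classes, hence products of algebras in $\cc{A}$; since $\cc{A}$ is a variety it is closed under products, so each $\theta$-class is in $\cc{A}$. And $\algd/\theta \cong \prod_u (\algd_u/\theta_u)$, a product of algebras in $\cc{B}$, hence in $\cc{B}$. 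So $\theta$ witnesses $\algd\in\cc{A}\circ\cc{B}$. This step uses neither hypothesis (1) nor (2).

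Next, \textbf{subalgebras}: let $\algd\in\cc{A}\circ\cc{B}$ with witness $\theta$, and $\alge\leq\algd$. The candidate is $\theta' = \theta\cap(E\times E) = \theta\!\upharpoonright_E$. This is a congruence on $\alge$, its quotient $\alge/\theta'$ embeds in $\algd/\theta$, so it is in $\cc{B}$. The trouble is the $\theta'$-classes: a $\theta'$-class of $\alge$ is $C\cap E$ where $C$ is a $\theta$-class of $\algd$. We know $C\in\cc{A}$ and $C\cap E$ is a subuniverse of $C$ (it is nonempty because it is a $\theta'$-class), so $C\cap E$, as a subalgebra of $C\in\cc{A}$, lies in $\cc{A}$. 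So this works too — and again neither hypothesis was needed. That is the tip-off that \emph{the real work is in homomorphic images}, which is where both hypotheses should be used.

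For \textbf{homomorphic images}: let $\algd\in\cc{A}\circ\cc{B}$ with witnessing congruence $\theta$, and let $\varphi:\algd\to\algd/\beta$ be a quotient map. I want a congruence $\gamma$ on $\algd/\beta$ witnessing membership in $\cc{A}\circ\cc{B}$. The obvious candidate is $\gamma = (\theta\join\beta)/\beta$. Then $(\algd/\beta)/\gamma \cong \algd/(\theta\join\beta)$, which is a quotient of $\algd/\theta\in\cc{B}$, hence in $\cc{B}$ — fine. The hard part is showing the $\gamma$-classes are in $\cc{A}$; a $\gamma$-class corresponds to a $(\theta\join\beta)$-class of $\algd$, and such a class need \emph{not} be a single $\theta$-class — it is a union of $\theta$-classes glued together by $\beta$. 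Here is where I expect to invoke the term $t$: on $\algd/\theta$ the operation $t$ is commutative ($\cc{B}\vDash t(x,y)\approx t(y,x)$), so within a $(\theta\join\beta)$-class of $\algd$ one can use $t$ to control how $\beta$ moves between $\theta$-classes. Concretely, I would argue that for a $(\theta\join\beta)$-class $K$, the quotient $\algb = K/(\theta\!\upharpoonright_K)$ is in $\cc{A}\circ\cc{B}$ with a \emph{trivial} $\cc{B}$-part — i.e. the $\theta\!\upharpoonright_K$-classes are in $\cc{A}$ and $K/(\theta\!\upharpoonright_K)$ should be forced, using commutativity of $t$ and $\cc{A}\vDash t(x,y)\approx x$, to be a one-element algebra, so that $K$ itself collapses appropriately and the corresponding $\gamma$-class, which is $K/(\beta\!\upharpoonright_K)$, is a quotient of a single $\theta$-class — but $\cc{A}$ is not assumed closed under quotients, so this is not yet enough. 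This is exactly the point at which hypothesis (2) enters: since $\cc{A}$ is axiomatized by identities in at most two variables, to check that the $\gamma$-class lies in $\cc{A}$ I only need to verify each such $2$-variable identity on pairs of its elements, and any two elements of the $\gamma$-class have $\beta$-preimages that I can connect, via a $t$-zigzag inside $K$, back into a common $\theta$-class of $\algd$ where the identity holds; the $2$-variable restriction means only one such pair needs to be controlled at a time, which a single application of $t$ can achieve.

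I expect the main obstacle to be precisely this last verification: making rigorous the claim that commutativity of $t$ on $\algd/\theta$ together with $t$ being a projection on $\cc{A}$ lets one ``straighten out'' a $(\theta\join\beta)$-class enough that its image under $\beta$ satisfies all $2$-variable $\cc{A}$-identities. The bookkeeping — showing that for any two elements $\bar a,\bar b$ of the candidate $\gamma$-class one can pick representatives $a,b$ with $a\mathrel{\theta} b$ after applying $t$, and that this suffices because $\cc{A}$-identities only involve two variables — is where the proof's content lies, and it is the only place both hypotheses (1) and (2) are genuinely used.
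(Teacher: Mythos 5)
Your reduction to homomorphic images is the right move and is exactly the paper's route: products and subalgebras are handled as you describe (no hypotheses needed), the candidate congruence on $\algd/\beta$ is $(\theta\join\beta)/\beta$, its quotient lies in $\cc{B}$, and hypothesis (2) is used to check the classes one $2$-variable identity at a time. But the proposal stops precisely where the content is. The step you defer --- ``any two elements of the $\gamma$-class have $\beta$-preimages that I can connect, via a $t$-zigzag, back into a common $\theta$-class'' --- is the assertion that $\theta\join\beta=\beta\circ\theta\circ\beta$ (relational product), and it does not follow from anything you have written: a priori a $(\theta\join\beta)$-class is a union of $\theta$-classes linked by arbitrarily long alternating $\beta,\theta$-chains, and one must show every such chain collapses to one of length three. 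The missing lemma is the shuffle $\theta\circ\beta\circ\theta\subseteq\beta\circ\theta\circ\beta$, proved by the only computation in which hypothesis (1) is actually used: if $a\overset{\theta}{\equiv}b$, $b\overset{\beta}{\equiv}c$, $c\overset{\theta}{\equiv}d$, then
$$a=t(a,b)\overset{\beta}{\equiv}t(a,c)\overset{\theta}{\equiv}t(b,d)\overset{\theta}{\equiv}t(d,b)\overset{\beta}{\equiv}t(d,c)=d,$$
where the outer equalities use $\cc{A}\vDash t(x,y)\approx x$ on $\theta$-classes and the middle $\theta$-step uses commutativity of $t$ modulo $\theta$ (since $\algd/\theta\in\cc{B}$). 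This containment makes $\beta\circ\theta\circ\beta$ transitive, hence equal to $\theta\join\beta$. With $(a,d)\in\theta\join\beta$ written as $a\overset{\beta}{\equiv}b\overset{\theta}{\equiv}c\overset{\beta}{\equiv}d$, a $2$-variable identity $u\approx v$ of $\cc{A}$ is then verified on the $\gamma$-class by $u(a,d)\overset{\beta}{\equiv}u(b,c)=v(b,c)\overset{\beta}{\equiv}v(a,d)$, completing the argument you sketched.

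Two smaller corrections to the middle of your homomorphic-image paragraph: $K/(\theta\!\upharpoonright_K)$ is not forced to be a one-element algebra (it is an entire $(\theta\join\beta)/\theta$-class of $\algd/\theta$ and can be large), and $\cc{A}$ is certainly closed under quotients, being a variety --- the genuine obstruction is only that a $\gamma$-class is not presented as a subalgebra or quotient of a single $\theta$-class, which is what the shuffle above repairs.
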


There is an unfortunate conflict of standard notation in this proof. For two binary relations, $\alpha$ and $\beta$ on a set, $A$, the relational product, $\alpha\circ\beta$ is the set $$\{(a,c):\text{there is $b\in A$ such that }(a,b)\in\alpha\text{ and }(b,c)\in\beta\}.$$

\begin{proof}
It is straightforward to show that $\cc{A}\circ\cc{B}$ is closed under taking products and subalgebras even if (1) and (2) do not hold, so we need only show that $\cc{A}\circ\cc{B}$ is closed under taking quotients. We begin with a claim that is inspired by Corollary~7.13 from \cite{hobbymckenzie}.
\begin{cla}
Let $\alga\in\cc{A}\circ\cc{B}$ and suppose $\theta\in\conn(\alga)$ witnesses this. For any $\alpha\in\conn(\alga)$, we have $\theta\circ\alpha\circ\theta\subseteq\alpha\circ\theta\circ\alpha$. Here, $\circ$ refers to the relational product.
\end{cla}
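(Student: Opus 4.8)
The plan is to prove the claim by taking an arbitrary pair $(a,d) \in \theta \circ \alpha \circ \theta$ and constructing, with the help of the special term $t$ from hypothesis (1), a ``zig-zag'' witnessing membership in $\alpha \circ \theta \circ \alpha$. So suppose $(a,d)$ lies in $\theta \circ \alpha \circ \theta$: there exist $b,c \in A$ with $a \mathrel{\theta} b$, $b \mathrel{\alpha} c$, and $c \mathrel{\theta} d$. I want to produce an element $e \in A$ with $a \mathrel{\alpha} e$, $e \mathrel{\theta} f$ (for some further $f$), and $f \mathrel{\alpha} d$ — or, more economically, a single intermediate pattern $\alpha \circ \theta \circ \alpha$. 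The natural candidates to feed into $t$ are the four points $a, b, c, d$; since $t$ is binary I expect to use expressions like $t(a,b)$, $t(b,a)$, $t(c,d)$, $t(d,c)$, and compare them against $a,b,c,d$ using the two faces of hypothesis (1).

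The key mechanism is this: inside any single $\theta$-class, which lies in $\cc{A}$, the identity $t(x,y) \approx x$ holds, so $t$ acts as first projection there; whereas modulo $\theta$ the algebra lies in $\cc{B}$, so $t$ is symmetric there, i.e. $t(x,y) \mathrel{\theta} t(y,x)$ always. First I would record the consequences: since $a \mathrel{\theta} b$ we get $t(a,b) = a$ and $t(b,a) = b$ (evaluating $t$ within the $\theta$-class of $a$, using that $\cc{A} \vDash t(x,y) \approx x$ and that the class is a subalgebra in $\cc{A}$); similarly $t(c,d) = c$ and $t(d,c) = d$. Next, because $\theta$ is a congruence and $b \mathrel{\alpha} c$ hence $b$ and $c$ agree mod $\theta \join \alpha$... more directly: apply $t$ coordinatewise to the pairs $(a,b) \mathrel{(\alpha)} (?,?)$. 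The cleanest route: from $b \mathrel{\alpha} c$ we get $t(b, x) \mathrel{\alpha} t(c, x)$ and $t(x,b) \mathrel{\alpha} t(x,c)$ for any fixed $x$, since $\alpha$ is a congruence. Combining, I would aim to show $a = t(a,b) \mathrel{\alpha} t(a,c)$, then that $t(a,c) \mathrel{\theta} t(c,a)$ (symmetry of $t$ mod $\theta$), then $t(c,a) \mathrel{\alpha} t(c,b) \cdot(\text{something})$ — I'll need to push $t(c,a)$ back toward $d$. Using $c \mathrel{\theta} d$ and $a \mathrel{\theta} b$ carefully with the congruence $\alpha$ and the projection identity, I expect to land at $d$, giving the chain $a \mathrel{\alpha} t(a,c) \mathrel{\theta} t(c,a) \mathrel{\alpha} d$, which is exactly membership of $(a,d)$ in $\alpha \circ \theta \circ \alpha$.

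Concretely the chain I would try to verify is:
\[
a = t(a,b) \overset{\alpha}{\equiv} t(a,c) \overset{\theta}{\equiv} t(c,a) \overset{\alpha}{\equiv} t(c,b)?
\]
and separately $t(c,b)$ or $t(d,c) = d$; one of these comparisons will need both $a\mathrel\theta b$ and $c \mathrel\theta d$ simultaneously together with the fact that mod $\theta$ the operation $t$ depends symmetrically, so that $t(c,a) \mathrel{\theta} t(c,b)$ (congruence in second coordinate, $a\mathrel\theta b$) and $t(c,b) \mathrel{\alpha} t(d,b)$... I should be careful: I need the last element reached to be $\theta$-related or $\alpha$-related to $d$ in the right pattern. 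The honest expectation is that I will need to try a couple of arrangements of the four points in $t$ before one closes up; the hypothesis (2), that $\cc{A}$ is axiomatized by at most $2$-variable identities, is presumably what is actually used later (to close $\cc{A} \circ \cc{B}$ under quotients given the claim), not in the claim itself, so for the claim I only expect to use (1).

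The main obstacle I anticipate is bookkeeping the direction of the projection identity: $t(x,y) \approx x$ collapses a $\theta$-class to its \emph{first} argument, so whether I should be computing $t(a,b)$ or $t(b,a)$ at each step is determined by which point I want to ``survive,'' and getting all four evaluations consistently oriented so that the $\alpha$-steps (which require matching coordinates to be $\alpha$-related) line up is the delicate part. A secondary subtlety is making sure that whenever I invoke $t(x,y) = x$ I genuinely have $x \mathrel{\theta} y$ so that the evaluation takes place inside a single $\theta$-class (a subalgebra in $\cc{A}$); this forces me to interleave the $\theta$-steps and $t$-evaluations in the right order rather than applying $t$ to an arbitrary pair. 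Once the correct four-term zig-zag is found, each individual step is immediate from ``$\alpha,\theta$ are congruences'' plus one of the two faces of hypothesis (1), so the remaining work is routine.
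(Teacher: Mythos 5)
Your overall strategy is exactly the paper's: evaluate $t$ on pairs drawn from $a,b,c,d$, use $t(x,y)=x$ inside $\theta$-classes to pin down the endpoints, use $t(x,y)\overset{\theta}{\equiv}t(y,x)$ for the middle $\theta$-step, and use that $\alpha$ and $\theta$ are congruences to move between these terms. You also correctly identify the two anchors $a=t(a,b)$ and $d=t(d,c)$ and the correct first step $a=t(a,b)\overset{\alpha}{\equiv}t(a,c)$. However, the concrete chain you write down does not close, and this is a genuine gap rather than mere bookkeeping: your proposed middle, $t(a,c)\overset{\theta}{\equiv}t(c,a)$, uses only commutativity mod $\theta$ and never brings in $a\overset{\theta}{\equiv}b$ and $c\overset{\theta}{\equiv}d$, so from $t(c,a)$ there is no legal $\alpha$-step to $d$ (the only $\alpha$-information available is $b\overset{\alpha}{\equiv}c$, which lets you change a coordinate from $b$ to $c$ or back, never from $c$ to $d$). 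Indeed your tentative step $t(c,b)\overset{\alpha}{\equiv}t(d,b)$ would require $c\overset{\alpha}{\equiv}d$, which you do not have; following your chain honestly lands you at $b$, not $d$.

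The missing move is to shift \emph{both} coordinates of $t(a,c)$ through $\theta$ simultaneously (using $a\overset{\theta}{\equiv}b$ and $c\overset{\theta}{\equiv}d$ at once) and only then commute, so that the surviving first coordinate becomes $d$ and the second becomes $b$, which $\alpha$ can then move to $c$:
$$a=t(a,b)\overset{\alpha}{\equiv}t(a,c)\overset{\theta}{\equiv}t(b,d)\overset{\theta}{\equiv}t(d,b)\overset{\alpha}{\equiv}t(d,c)=d.$$
Here the two consecutive $\theta$-steps compose by transitivity, giving the required pattern $\alpha\circ\theta\circ\alpha$. You anticipated that ``both $a\overset{\theta}{\equiv}b$ and $c\overset{\theta}{\equiv}d$ simultaneously'' would be needed, but without the term $t(b,d)$ the argument is not complete. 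You are right that hypothesis (2) of the surrounding proposition plays no role in the Claim itself.
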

\begin{proof}[Proof of Claim]
Suppose $(a,d)\in\theta\circ\alpha\circ\theta$, which means there are $b,c\in A$ satisfying $(a,b)\in\theta,(b,c)\in\alpha$, and $(c,d)\in\theta$. Then
$$a=t(a,b)\overset{\alpha}{\equiv}t(a,c)\overset{\theta}{\equiv}t(b,d)\overset{\theta}{\equiv}t(d,b)\overset{\alpha}{\equiv}t(d,c)=d,$$
so $(a,d)\in\alpha\circ\theta\circ\alpha$.
\end{proof}
Since $\theta\join\alpha$ is transitive and contains both $\alpha$ and $\theta$, we have $\alpha\circ\theta\circ\alpha\subseteq\theta\join\alpha$. The opposite inclusion is implied by the Claim. Therefore, $\alpha\circ\theta\circ\alpha=\alpha\join\theta$.

We now wish to choose an arbitrary congruence, $\alpha\in\conn(\alga)$ and prove that $\alga/\alpha\in\cc{A}\circ\cc{B}$. The congruence $(\theta\join\alpha)/\alpha\in\conn(\alga/\alpha)$ has the property that $(\alga/\alpha)/((\theta\join\alpha)/\alpha)\cong \alga/(\theta\join\alpha)$, which is a homomorphic image of $\alga/\theta$, so it is in $\cc{B}$. To finish the proof, we will show that for any binary terms $u$ and $v$ such that $\cc{A}\vDash u(x,y)\approx v(x,y)$, we have that each $(\alpha\join\theta)/\alpha$-block satisfies $u(x,y)\approx v(x,y)$. This amounts to showing for any $(a,d)\in\theta\join\alpha$ that $u(a,d)\overset{\alpha}{\equiv}v(a,d)$. From the previous paragraph, there are $b,c\in A$ such that $(a,b)\in\alpha,(b,c)\in\theta$, and $(c,d)\in\alpha$. We then have
$$u(a,d)\overset{\alpha}{\equiv}u(b,c)=v(b,c)\overset{\alpha}{\equiv} v(a,d).$$
Since $\cc{A}$ has an axiomatization consisting of two variable identities, we have that each $(\alpha\join\theta)/\alpha$ block satisfies the identities in this axiomatization, so it is in $\cc{A}$.
\end{proof}

When similar idempotent varieties, $\cc{A}$ and $\cc{B}$ satisfy hypothesis (1) from Proposition~\ref{maltsevprodvariety}, and $\alga\in\cc{A}\circ\cc{B}$, there is a unique congruence on $\alga$ that witnesses it. The existence and definition of this congruence will be important in Section~\ref{mainresult}.

\begin{dfn}\label{specialcongruence}
Suppose $\cc{A}$ and $\cc{B}$ are similar idempotent varieties whose similarity type has a binary term, $t$, satisfying (1) from Proposition~\ref{maltsevprodvariety}. For $\alga\in\cc{A}\circ\cc{B}$, define $$\theta_{\alga} = \{(a,b)\in A^2:t(a,b)=a \text{ and } t(b,a)=b\}.$$
The subscript will be omitted whenever possible.
\end{dfn}

\begin{lem}\label{specialcongruencelem}
Let $\cc{A}$ and $\cc{B}$ be as in Definition~\ref{specialcongruence} and $\alga\in\cc{A}\circ\cc{B}$. Then $\theta_{\alga}\in\conn(\alga)$ and it is the unique congruence which witnesses $\alga\in\cc{A}\circ\cc{B}$.
\end{lem}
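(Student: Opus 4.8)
The plan is to first verify that $\theta := \theta_{\alga}$ is a congruence, and then show it coincides with any witnessing congruence $\psi$, which simultaneously proves it is a witness and that the witness is unique. Let me think about what needs to happen.

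Reflexivity of $\theta$: $t(a,a) = a$ by idempotence (both $\cc{A}$ and $\cc{B}$ are idempotent, so $t$ is idempotent). Symmetry is immediate from the symmetric form of the definition. For transitivity and the compatibility with operations, I expect the cleanest route is to first prove the key inclusion $\psi \subseteq \theta$ and $\theta \subseteq \psi$ for a witnessing congruence $\psi$, and deduce everything from there — but that is circular unless I know a witness exists. Since $\alga \in \cc{A}\circ\cc{B}$, a witness $\psi$ does exist by hypothesis. So the real content is: (a) if $\psi$ witnesses $\alga \in \cc{A}\circ\cc{B}$, then $\psi = \theta$; (b) therefore $\theta$ (being equal to the given witness $\psi$) is itself a congruence and witnesses $\alga \in \cc{A}\circ\cc{B}$; and uniqueness follows since any two witnesses both equal $\theta$.

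For (a), suppose $\psi$ witnesses $\alga\in\cc{A}\circ\cc{B}$. To see $\psi\subseteq\theta$: if $(a,b)\in\psi$, then $a$ and $b$ lie in a common $\psi$-class, which as a subalgebra is in $\cc{A}$, so it satisfies $t(x,y)\approx x$; hence $t(a,b)=a$ and $t(b,a)=b$, i.e. $(a,b)\in\theta$. To see $\theta\subseteq\psi$: suppose $(a,b)\in\theta$, so $t(a,b)=a$ and $t(b,a)=b$. Pass to the quotient $\alga/\psi\in\cc{B}$, which satisfies $t(x,y)\approx t(y,x)$. Applying the canonical map: $a/\psi = t(a,b)/\psi = t(a/\psi, b/\psi) = t(b/\psi, a/\psi) = t(b,a)/\psi = b/\psi$, so $(a,b)\in\psi$. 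This gives $\psi = \theta$.

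The anticipated main obstacle is purely a matter of bookkeeping: strictly, before invoking ``the $\psi$-class is a subalgebra in $\cc{A}$'' one should recall that idempotence guarantees each congruence class is a subuniverse (this is remarked right after the definition of the Maltsev product in the excerpt), so the class really is an algebra in $\cc{A}$ and the identity $t(x,y)\approx x$ applies to its elements. Beyond that, there is essentially no difficulty. With $\psi=\theta$ established for the witness $\psi$ that exists by assumption, $\theta\in\conn(\alga)$ and ``$\theta$ witnesses $\alga\in\cc{A}\circ\cc{B}$'' are immediate; and if $\psi_1,\psi_2$ both witness it, then $\psi_1 = \theta = \psi_2$, giving uniqueness.
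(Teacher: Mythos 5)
Your proposal is correct and follows essentially the same route as the paper: fix a witnessing congruence (which exists by hypothesis), show it contains $\theta_{\alga}$ because $t$ is first projection on its classes, and show the reverse inclusion by using commutativity of $t$ in the quotient, concluding that every witness equals $\theta_{\alga}$. The only difference is that you spell out the bookkeeping (idempotence making classes subalgebras, reflexivity/symmetry) that the paper leaves implicit.
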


\begin{proof}
Using the assumption that $\alga\in\cc{A}\circ\cc{B}$, there is some congruence, $\alpha$, which witnesses $\alga\in\cc{A}\circ\cc{B}$. To prove the lemma, all we need to show is that $\theta=\alpha$. That $\alpha\subseteq\theta$ follows from the definition of $\theta$ and the fact that $t$ is the first projection on $\alpha$-blocks. If $(a,b)\in\theta$, then $a=t(a,b)\overset{\alpha}{\equiv}t(b,a)=b$, so $(a,b)\in\alpha$, which gives the other inclusion.
\end{proof}

For the next Lemma, $\cc{A}$ and $\cc{B}$ are similar varieties with a binary term, $\cdot$ which is a $2$-semilattice operation in $\cc{B}$, and the first projection in $\cc{A}$. By Lemma~\ref{specialcongruencelem}, any algebra, $\alga\in\cc{A}\circ\cc{B}$ has a unique congruence, $\theta_{\alga}$ witnessing $\alga\in\cc{A}\circ\cc{B}$. Furthermore, if $\alga$ is finite, then $\alga/\theta_{\alga}$ has a digraph structure defined in the same way as Definition~\ref{digraphdfn}. That is, for $a,b\in A$, $a/\theta\dig b/\theta$ when $a/\theta\cdot b/\theta=b/\theta$.

\begin{lem}\label{functionslem}
Suppose $\alga\in\cc{A}\circ\cc{B}$ is finite and satisfies $x\cdot(y\cdot z)\approx x\cdot(z\cdot y)$. For $a,b\in A$, if $a/\theta\dig b/\theta$, then there is a function $f_{a/\theta,b/\theta}:a/\theta\to b/\theta$ given by $f_{a/\theta,b/\theta}(x) = x\cdot b$. Moreover, this function is well defined in the sense that if $b\overset{\theta}{\equiv} b'$ then $f_{a/\theta,b/\theta}(x) = f_{a/\theta,b'/\theta}(x)$. 
\end{lem}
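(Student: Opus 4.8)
The plan is to verify two things: first, that the claimed map actually lands in $b/\theta$, i.e.\ that $x\cdot b \in b/\theta$ whenever $x\in a/\theta$ and $a/\theta\dig b/\theta$; and second, that the value $x\cdot b$ does not depend on the representative $b$ chosen from its $\theta$-class. Both reductions pass through the quotient $\algd/\theta$ (I will write $\algd$ for the algebra called $\alga$ in the statement), using that $\theta=\theta_{\alga}$ is a congruence and that $\cdot$ is a $2$-semilattice operation on $\algd/\theta$ together with the first projection on each $\theta$-class.

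First I would check well-definedness of the target. Since $a/\theta\dig b/\theta$ means $a/\theta\cdot b/\theta = b/\theta$ in $\algd/\theta$, and since $\theta$ is a congruence, for $x\in a/\theta$ we have $(x\cdot b)/\theta = (x/\theta)\cdot(b/\theta) = (a/\theta)\cdot(b/\theta) = b/\theta$, so indeed $x\cdot b\in b/\theta$. This is immediate and uses no hypothesis beyond $\theta\in\conn(\alga)$. Next, for independence of the representative, suppose $b\rel{\theta} b'$. I want $x\cdot b = x\cdot b'$ for $x\in a/\theta$. The key move is to write $b' = b\cdot b'$: since $b\rel{\theta}b'$ and $\cdot$ is the first projection on $\theta$-classes (hypothesis (1) of the theorem, i.e.\ $\cc{W}\vDash x\cdot y\approx x$, applied inside the $\theta$-class containing $b$ and $b'$, which is a subalgebra in $\cc{W}$), we get $b\cdot b' = b$ — wait, that gives $b$, not $b'$; so I would instead use $b'\cdot b = b'$, i.e.\ $b' = b'\cdot b$. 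Then
\[
x\cdot b' = x\cdot(b'\cdot b) = x\cdot(b\cdot b')
\]
by the hypothesis $\algd\vDash x\cdot(y\cdot z)\approx x\cdot(z\cdot y)$, and $x\cdot(b\cdot b') = x\cdot b$ since $b\cdot b' = b$ (again first projection on the $\theta$-class of $b,b'$). This chain gives $x\cdot b' = x\cdot b$, as required.

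The only genuinely delicate point is making sure the applications of ``$\cdot$ is the first projection'' are legitimate: this identity holds on each $\theta$-class viewed as a subalgebra of $\algd$ in $\cc{W}$, so it applies to pairs of elements lying in a \emph{common} $\theta$-class. The pairs $(b,b')$ and $(b',b)$ qualify since $b\rel{\theta}b'$, and the pair $(b\cdot b', b')$ qualifies because $(b\cdot b')/\theta = (b/\theta)\cdot(b'/\theta) = b/\theta = b'/\theta$, so $b\cdot b'\rel{\theta}b'\rel{\theta}b$ and all the relevant elements sit in the one $\theta$-class. I expect this bookkeeping — tracking which elements are $\theta$-related so that the flattening identity $x\cdot y\approx x$ may be invoked — to be the main (though modest) obstacle; everything else is a direct substitution using the two displayed hypotheses of Theorem~\ref{mainthm} and the fact that $\theta$ is a congruence.
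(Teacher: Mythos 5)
Your proposal is correct and follows essentially the same route as the paper: the containment $x\cdot b\in b/\theta$ comes from $\theta$ being a congruence together with $a/\theta\dig b/\theta$, and well-definedness comes from the chain $x\cdot b' = x\cdot(b'\cdot b) = x\cdot(b\cdot b') = x\cdot b$, using $b'\cdot b=b'$, the hypothesis $x\cdot(y\cdot z)\approx x\cdot(z\cdot y)$, and $b\cdot b'=b$. The paper justifies the two projection equalities directly from the definition of $\theta_{\alga}$ in Definition~\ref{specialcongruence} rather than by invoking the first-projection identity on the $\theta$-class as a subalgebra, but these are the same fact, so the extra bookkeeping you flag at the end is not actually needed.
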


\begin{proof}
If $x\overset{\theta}{\equiv} a$, then $x\cdot b\overset{\theta}{\equiv} a\cdot b\overset{\theta}{\equiv} b$ where the second equivalence is because $a/\theta\dig b/\theta$. This shows that $f_{a/\theta,b/\theta}$ as defined is a function from $a/\theta\to b/\theta$. It remains to show that the function is well defined. Using the definition of $\theta$, if $b\overset{\theta}{\equiv}b'$, then
\begin{eqnarray*}
x\cdot b &=& x\cdot (b\cdot b') \\
&=& x\cdot(b'\cdot b) \\
&=& x\cdot b'.
\end{eqnarray*} 
\end{proof}

We will use Lemma~\ref{functionslem} to take advantage of Bulatov's proof that every nonempty \adj{} of $\csp(\algd)$ has a solution when $\algd$ has a $2$-semilattice operation. This will allow us to prove Theorem~\ref{mainthm} by first considering it as an instance where $\algd$ has a $2$-semilattice operation by taking quotients of each potato and relation. This solution is an assignment of congruence classes, and hence, subuniverses of the original potatoes. We can then check the induced subinstance on these congruence classes using the algorithm for $\cc{W}$. Lemma~\ref{functionslem} allows us to show that this is sufficient to determine whether or not the input \adj{} has a solution.

\section{Bulatov's work from \cite{bulatov1}}\label{bulatovsection}
In this section, we go through the proof of Bulatov's main result from \cite{bulatov1}. The first Lemma is one of Bulatov's observations, translated to our context.

\begin{lem}\label{23minstrong}
Let $\algd\in\cc{S}$ and suppose $\cc{I}=(X,\cc{P},\cc{R})$ is a nonempty \adj{} of $\csp(\algd)$. The instance, $\cc{I}'=(X,\{P_x':x\in X\},\{R_{xy}':(x,y)\in X^2\})$, is a nonempty \adj{}.
\end{lem}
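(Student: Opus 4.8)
The plan is to verify conditions (P1) and (P2) for $\cc{I}'$ directly, since by the remark following Definition~\ref{ourcsp} these suffice to conclude $\cc{I}'$ is a \adj{}; nonemptiness of $\cc{I}'$ follows from Lemma~\ref{digraphproperties}(2), which guarantees each $P_x'$ (the smallest strongly connected component of $\algp_x$) is nonempty whenever $P_x$ is. Since $\cc{I}$ is nonempty, no potato is empty, so no $P_x'$ is empty. Condition (P1) for $\cc{I}'$ is immediate: $R_{xx}' = R_{xx}\cap(P_x'\times P_x') = 0_{P_x}\cap(P_x'\times P_x') = 0_{P_x'}$.

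The content is in (P2). Here I would first pin down what $R_{xy}'$ should be: the natural definition making $\cc{I}'$ a subinstance of $\cc{I}$ is $R_{xy}' = R_{xy}\cap(P_x'\times P_y')$, and one should check this is a subuniverse of $\algp_x'\times\algp_y'$, which it is as an intersection of subuniverses. Now fix $x,y,z\in X$ and $(a,b)\in R_{xy}'$, so $a\in P_x'$, $b\in P_y'$, and $(a,b)\in R_{xy}$. Applying (P2) for $\cc{I}$ to the pair $(a,b)$ and variable $z$ gives some $c_0\in P_z$ with $(a,c_0)\in R_{xz}$ and $(b,c_0)\in R_{yz}$. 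The problem is that $c_0$ need not lie in $P_z'$. The key idea — this is the main obstacle — is to ``push" $c_0$ into $P_z'$ using the $2$-semilattice operation while dragging $a$ and $b$ along via elements already in their smallest components. By Lemma~\ref{digraphproperties}(2) applied in $\algp_z$, pick $c\in P_z'$ with $c_0\dig c$; I would then consider the element $c' := c_0\cdot c$, or more to the point replace $c_0$ by a suitable combination so that the new witness lands in $P_z'$. The cleanest route: since $a\in P_x'$ and $P_x'$ is strongly connected, and similarly for $b$, one uses that $R_{xz}$ and $R_{yz}$ are subuniverses closed under $\cdot$, together with Lemma~\ref{digraphproperties}(1) and the absorption property (4), to produce from $(a,c_0),(b,c_0)$ and appropriate idempotent/absorbing manipulations a new witness $\hat c\in P_z'$ with $(a,\hat c)\in R_{xz}$ and $(b,\hat c)\in R_{yz}$. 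Concretely I expect: take any $d\in P_z'$, form $c_1 = c_0 \cdot d$; since $P_z'$ absorbs, $c_1\in P_z'$; and since $(a,c_0)\in R_{xz}$ with $(a,d')\in R_{xz}$ for a suitable $d'$ obtained from (P2) applied to a pair inside $P_x'$, closure of $R_{xz}$ under $\cdot$ gives $(a, \text{something in }P_z')\in R_{xz}$, and one checks the two ``somethings'' (for $x$ and for $y$) can be chosen equal.

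I would then organize the argument so that the replacement is done in one symmetric step: apply (P2) for $\cc{I}$ once more to a pair $(a',b')\in R_{xy}'$ chosen cleverly (e.g.\ using strong connectedness of $P_x'$ to route back to $a$) so that the witness it produces is forced by absorption of the smallest components into $P_z'$. The main obstacle, as indicated, is exactly this: (P2) is a statement about the original instance and gives no control over which component the witness falls in, so the whole lemma hinges on the interplay between absorption (Lemma~\ref{digraphproperties}(4)), strong connectedness of the smallest components, and closure of the $R_{xy}$ under the $2$-semilattice operation. Once a witness in $P_z'$ is produced, (P4) and (P3) for $\cc{I}'$ either follow formally (as noted in the paper) or are inherited from $\cc{I}$ by intersecting with the smallest components, completing the proof that $\cc{I}'$ is a nonempty \adj{}.
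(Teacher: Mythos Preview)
Your proposal rests on a misidentification of $R_{xy}'$. In this paper the prime denotes the smallest strongly connected component of the algebra in question (see Lemma~\ref{digraphproperties}(2) and Definition~\ref{inequalitiesdfn}), so $R_{xy}'$ is the minimal strongly connected component of the $2$-semilattice $\algr_{xy}$, \emph{not} the intersection $R_{xy}\cap(P_x'\times P_y')$. These are in general different subuniverses, and the point of the lemma is precisely that taking minimal components of both the potatoes \emph{and} the relations yields a \adj{}. With the correct definition, your one-line argument for (P1) no longer applies, and the containment $R_{xy}'\subseteq P_x'\times P_y'$ is not automatic: it requires the walk argument the paper gives, namely that any element of $R_{xy}$ admits a directed walk to any element of $R_{xy}'$, and projecting such a walk to either coordinate witnesses membership in $P_x'$ or $P_y'$ via Lemma~\ref{digraphproperties}(3).

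Even granting your intended definition, the sketch for (P2) does not close. You want to replace the witness $c_0\in P_z$ by something in $P_z'$ while keeping the first coordinates fixed at $a$ and $b$. Multiplying $(a,c_0)$ by some $(a',d)\in R_{xz}$ with $d\in P_z'$ gives $(a\cdot a',\,c_0\cdot d)$; to keep the first coordinate equal to $a$ you would need $a\cdot a'=a$, and simultaneously you would need the analogous manipulation on the $y$-side to produce the \emph{same} element of $P_z'$. Your outline does not explain how to achieve this coherence. The paper handles (P2) differently: it introduces the triangle algebra $T\leq\algp_x\times\algp_y\times\algp_z$, first produces a single element $(a,b,c)\in S$ (the ``prime'' triangles) by multiplying three triangles each of which has one prime face, and then, for an arbitrary $(d,e)\in R_{xy}'$, lifts a directed walk from $(a,b)$ to $(d,e)$ in $R_{xy}$ to a walk in $T$ starting at $(a,b,c)$. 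Since $S$ is closed under out-neighbours, the endpoint $(d,e,f)$ lies in $S$, giving the required $f\in P_z'$. This walk-lifting step is the missing idea in your approach.
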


\begin{proof}
Since $P_x$ and $R_{xy}$ are nonempty for each $x,y\in X$, we also have that $P_x'$ and $R_{xy}'$ are nonempty for all $x,y\in X$. We first show that $R_{xy}'\subseteq P_x'\times P_y'$. To see this, choose $(a,b)\in R_{xy}'$. We need to show that $a\in P_x'$ and $b\in P_y'$. We will show only the first since the second is similar to it. Pick $a'\in P_x$. Since $\cc{I}$ is a \adj{}, (P3) guarantees some $b'\in P_y$ with $(a',b')\in R_{xy}$. By Lemma~\ref{digraphproperties}~(3), there is a directed walk from $(a',b')$ to $(a,b)$ in $R_{xy}$. Restricting to the first coordinates, we get a walk in $P_x$ from $a'$ to $a$. Since $a'\in P_x$ was arbitrary, we have that $a\in P_x'$, again by Lemma~\ref{digraphproperties}~(3).

To see that $\cc{I}'$ satisfies (P1), Observe that (P1) for $\cc{I}$ implies that $\algp_x$ and $\algr_{xx}$ are isomorphic via the map given by $a\mapsto (a,a)$ for $a\in P_x$. This means $R_{xx}'$ will be precisely $\{(a,a):a\in P_x'\}$.
We now show that $\cc{I}'$ satisfies (P2). First, we define the algebra of triangles on $(x,y,z)$ by
$$T=\{(a,b,c)\in P_x\times P_y\times P_z:(a,b)\in R_{xy},(a,c)\in R_{xz},(b,c)\in R_{yz}\}$$
and show that its subuniverse,
$$S=\{(a,b,c)\in P_x'\times P_y'\times P_z':(a,b)\in R_{xy}',(a,c)\in R_{xz}',(b,c)\in R_{yz}'\},$$
is nonempty. If we take $(a_1,b_1)\in R_{xy}'$, by (P2) for $\cc{I}$, it extends to some $(a_1,b_1,c_1)\in T$. Similarly, there is $(a_2,b_2,c_2)\in T$ with $(a_2,c_2)\in R_{xz}'$, and $(a_3,b_3,c_3)\in T$ with $(b_3,c_3)\in R_{yz}'$. If we set $a=a_1\cdot (a_2\cdot a_3)$, $b=b_1\cdot(b_2\cdot b_3),$ and $c=c_1\cdot(c_2\cdot c_3)$, then an application of Lemma~\ref{digraphproperties}~(4) shows that $(a,b,c)$ is in $S$, so $S$ is nonempty. Now we take $(d,e)\in R_{xy}'$ and find $f\in P_z'$ such that $(d,e,f)\in S$. Again, we can use (P2) of $\cc{I}$ to find $f'\in P_z$ so that $(d,e,f')\in T$. Since $(d,e)\in R_{xy}'$, Lemma~\ref{digraphproperties}~(3) guarantees a walk in $R_{xy}$ from $(a,b)$ to $(d,e)$. Using this and (P2) of $\cc{I}$, we can find a sequence, $\{(u_i,v_i,w_i)\}_{i=1}^n$ of elements of $T$ so that
$$(a,b)\overset{\algr_{xy}}{\dig}(u_1,v_1)\overset{\algr_{xy}}{\dig}\cdots\overset{\algr_{xy}}{\dig}(u_n,v_n)\overset{\algr_{xy}}{\dig}(d,e).$$
Now define $w_1^*=c\cdot w_1$, and $w_i^*=w_{i-1}^*\cdot w_i$ for $2\leq i\leq n$. As well, set $f=w_n^*\cdot f'$. It follows from Lemma~\ref{digraphproperties}~(1) that
$$(a,b,c)\overset{\algt}{\dig}(u_1,v_1,w_1^*)\overset{\algt}{\dig}\cdots\overset{\algt}{\dig}(u_n,v_n,w_n^*)\overset{\algt}{\dig}(d,e,f).$$
Since each of $R_{xy}',R_{xz}'$, and $R_{yz}'$ is closed with respect to taking out neighbours, it follows that $S$ is as well. Therefore, since $(a,b,c)\in S$, the entire walk, including $(d,e,f)$, is in $S$. 
\end{proof}

Lemma~\ref{23minstrong} in the context of Bulatov's proof of Theorem~\ref{bulatovsresult} allows us to pass from an arbitrary \adj{} to one where every potato and relation is strongly connected. We can also get away with a different reduction when this one is not possible and some potato has more than one element. These two reductions together make up the bulk of the proof of Theorem~\ref{bulatovsresult}. Following \cite{bulatov1}, we now go through an in depth study of the subdirect relations between strongly connected potatoes. Using the theory of absorption developed by Barto and Kozik in \cite{bkcyclic}, we can simplify some of Bulatov's reasoning.

\begin{lem}\label{stronglyconnectedabsorptionfree}
Let $\alga\in\cc{S}$ be finite and strongly connected. Then $\alga$ has no proper absorbing subuniverse.
\end{lem}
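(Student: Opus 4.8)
The plan is to prove the contrapositive: if $\alga$ is a finite $2$-semilattice with a proper absorbing subuniverse $B \vartriangleleft \alga$, then $\alga$ is not strongly connected. The key fact I would use is Lemma~\ref{digraphproperties}~(4), which says the smallest strongly connected component $A'$ is itself absorbing. So the real content is to understand how an arbitrary absorbing subuniverse interacts with the digraph structure, and in particular to show that any absorbing subuniverse must contain $A'$. If I can show $A' \subseteq B$ for every absorbing subuniverse $B$, then strong connectedness (i.e. $A' = A$) forces $B = A$, contradicting properness.

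First I would fix the witnessing term: let $f$ be an $n$-ary term operation of $\alga$ with $n \ge 2$ such that $f(b_1,\dots,b_n) \in B$ whenever at least $n-1$ of the $b_i$ lie in $B$. Since $\alga$ is a $2$-semilattice, every term operation depends on a subset of its variables, and on the variables it genuinely depends on it agrees with iterated $\cdot$ (by the Proposition following Lemma~\ref{digraphproperties}, every binary term depending on both variables equals $x \cdot y$, and a routine extension handles higher arity). The absorption condition forces $f$ to depend on at least two variables. The heart of the argument is then: take any $a \in A'$ and any $b \in B$; I want to produce, using $f$ and the directed-walk structure from Lemma~\ref{digraphproperties}~(2)--(3), an element of $B$ that lies in $A'$, and then use the fact that $A'$ is a single strongly connected component (plus the "out-neighbours of $A'$ stay in $A'$" observation from the proof of part~(2)) together with closure of $B$ under $\cdot$ to sweep all of $A'$ into $B$.

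Concretely, I would first show that $A' \cap B \ne \varnothing$: pick any $b \in B$, use Lemma~\ref{digraphproperties}~(2) to find $a' \in A'$ with $b \dig a'$, so $b \cdot a' = a' \in A'$; but to land in $B$ I instead set things up with the absorbing term — e.g. plug $b$ into $n-1$ slots and a suitable element reaching $A'$ into the last, using that $f$ on its essential variables is iterated $\cdot$ and Lemma~\ref{digraphproperties}~(1) to control where the product goes. Once $A' \cap B \ne \varnothing$, fix $a_0 \in A' \cap B$. For any other $c \in A'$, strong connectedness of $A'$ gives a directed walk $a_0 \dig \cdots \dig c$ inside $A'$; since $B$ is closed under $\cdot$ and $x \dig y$ means $x \cdot y = y$, and one endpoint of each edge is already shown to be in $B$, induction along the walk (using $B$ closed under $\cdot$: if $u \in B$ and $u \dig v$ then $v = u \cdot v$, but $v$ need not be in $B$ unless $v \in B$ too — so here I actually need the absorbing term $f$, not just $\cdot$, to push a single known element of $B$ along an edge). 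This is where I'd lean on the $n$-ary absorbing $f$: given $u \in B$ with $u \dig v$, I can write $v$ as $f$ applied to a tuple that is mostly $u$'s and whose product lands on $v$ (exploiting idempotence and Lemma~\ref{digraphproperties}~(1), that $u \dig u \cdot v$ and the walk structure), concluding $v \in B$. Iterating along the walk gives $c \in B$, hence $A' \subseteq B$, hence $B = A$, a contradiction.

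The main obstacle I anticipate is exactly that last maneuver: $B$ being closed under $\cdot$ is too weak to propagate membership along digraph edges (knowing $u \in B$ and $u \dig v$ does not give $v \in B$), so the whole argument has to be routed through the genuinely $n$-ary absorbing term $f$ and a careful choice of which coordinates to fill with elements of $B$ versus elements chosen to steer the iterated-$\cdot$ product to the desired target in $A'$. Making that combinatorial choice precise — and checking that $f$, once restricted to its essential coordinates, really does behave like iterated $\cdot$ so that Lemma~\ref{digraphproperties}'s walk lemmas apply — is the technical crux; everything else (nonemptiness of $B$, reduction to the contrapositive, the final contradiction) is straightforward given Lemma~\ref{digraphproperties}.
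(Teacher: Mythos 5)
Your core mechanism is the right one and is the same as the paper's: plug $n-1$ copies of an element of $B$ and one target element into the absorbing term, and use the semilattice structure on a digraph edge to force the output to be the target. The ``technical crux'' you flag --- showing that $f(v,u,\dots,u)=v$ when $u\in B$ and $u\dig v$ --- is exactly the one piece of work in the paper's proof, and it is discharged in a line by Lemma~\ref{digraphproperties}~(5): since $u\dig v$, the set $\{u,v\}$ is a subuniverse on which $\cdot$ is a semilattice operation with absorbing element $v$, so any term that syntactically mentions the coordinate holding $v$ evaluates to $v$ there. The only preparation required is the observation (made at the start of the paper's proof) that the witnessing term may be taken to depend on, hence mention, all of its variables, by identifying the dummy variables with an essential one; your heavier remark that $f$ ``behaves like iterated $\cdot$ on its essential variables'' is more than you need. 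Where you genuinely diverge is in the global organization: the contrapositive through $A'$, the step $A'\cap B\neq\varnothing$, and the induction along directed walks are all unnecessary under the hypotheses of the lemma, because $A$ strongly connected and $B$ nonempty and proper immediately yield a single edge $b\dig c$ with $b\in B$ and $c\notin B$, and the computation above applied to that one edge already gives the contradiction $c=f(c,b,\dots,b)\in B$. Your propagation step ($u\in B$ and $u\dig v$ imply $v\in B$) is correct and, in an arbitrary finite $2$-semilattice, shows that every absorbing subuniverse is closed under out-neighbours and hence contains $A'$ --- a slightly more general fact --- but for the lemma as stated it is extra machinery around the same calculation.
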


\begin{proof}
First, we note that if $B\vartriangleleft\alga$, then there is some term, $t$, witnessing it that depends on all of its variables. If $u$ is an $n$-ary term witnessing $B\vartriangleleft\alga$ that depends on variables $x_1,\dots,x_k$, then $t(x_1,\dots,x_k)$ defined by $u(x_1,\dots,x_{k-1},x_k,x_k,\dots,x_k)$ has these properties.
Suppose $B\vartriangleleft \alga$ with respect to some $n$-ary term, $t$, which depends on all of its variables. Since $B$ is proper and $(A,\dig)$ is strongly connected, there are $b\in B$ and $c\in A-B$ with $b\dig c$. By Lemma~\ref{digraphproperties}~(5), the algebra $(\{b,c\};\cdot)$ is a semilattice with absorbing element $c$. Now consider $t(c,b,\dots,b)$. Since $t$ depends on $x_0$, it must mention $x_0$ syntactically. It follows that $t(c,b,\dots,b)=c$ since $c$ is the absorbing element of the semilattice. This is a contradiction, so there could have been no such $B$.
\end{proof}

We can now apply Theorem~\ref{absorptiontheorem} to narrow down the possibilities of the $R_{xy}$ in \adj s.

\begin{lem}\label{edgerelations}
Let $\alga,\algb\in\cc{S}$ be finite and strongly connected with $\algb$ simple. If $R\leq_{\sd}\alga\times\algb$ then either $R=A\times B$ or there is a surjective homomorphism $\varphi:\alga\to\algb$ such that $R=\{(a,\varphi(a)):a\in A\}$. In other words, $R$ is the graph of $\varphi$.
\end{lem}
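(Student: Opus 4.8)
The dichotomy we want is typical absorption-style reasoning: either the subdirect relation $R \leq_{\sd} \alga \times \algb$ is "fat" (all of $A \times B$) or it is "thin" (the graph of a surjection). The natural tool is the Absorption Theorem (Theorem~\ref{absorptiontheorem}), and the point of Lemma~\ref{stronglyconnectedabsorptionfree} is precisely that finite strongly connected $2$-semilattices are absorption-free, so the hypothesis of that theorem is met for both $\alga$ and $\algb$. The Absorption Theorem then says: if $R$ is linked, then $R = A \times B$. So the plan is to split on whether $R$ is linked.

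\emph{Case 1: $R$ is linked.} Then the Absorption Theorem applied directly to $R \leq_{\sd} \alga \times \algb$ (using that $\alga$ and $\algb$ are finite, absorption-free by Lemma~\ref{stronglyconnectedabsorptionfree}, and lie in the idempotent Taylor variety $\cc{S}$) gives $R = A \times B$, which is the first alternative.

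\emph{Case 2: $R$ is not linked.} Here I would argue that the linking (i.e. "connected bipartite graph") equivalence relation forces $R$ to be the graph of a homomorphism. Concretely, define on $A$ the relation $a \sim a'$ iff $a$ and $a'$ are in the same connected component of the bipartite graph of $R$; one checks this is a congruence on $\alga$ (it is the kernel of $\pr_1$ restricted to the "linking components", and subdirectness plus the fact that $R$ is a subalgebra make it compatible with $\cdot$). Similarly get a congruence on $\algb$; since $\algb$ is simple, that congruence is either $0_B$ or $1_B$. If it were $1_B$, then all of $B$ is in one component, and since $R$ is subdirect, all of $A$ is too, contradicting "not linked" (as long as $|A|,|B|\geq 2$; the degenerate cases where a factor is trivial are immediate). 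So the congruence on $\algb$ is $0_B$, meaning each $b \in B$ sits in its own component. Subdirectness of $R$ then says that over each such singleton component $\{b\}$, the set $\{a : (a,b) \in R\}$ is a full $\sim$-class $C_b$ of $A$, and the $C_b$ partition $A$; so $R$ is the union of the "boxes" $C_b \times \{b\}$, i.e. $R$ is (the graph of) a surjective map $A/{\sim} \to B$. Finally, $A/{\sim}$ is a quotient of $\alga$ and the induced map $\alga/{\sim} \to \algb$ is a homomorphism because $R$ is a subalgebra of $\alga \times \algb$; composing with the quotient map $\alga \to \alga/{\sim}$ gives the desired surjective homomorphism $\varphi : \alga \to \algb$ with $R = \{(a,\varphi(a)) : a \in A\}$. (One should double-check $\varphi$ is well defined as a function $A \to B$, which is exactly the statement that each $C_b$ is an entire $\sim$-class and these are disjoint.)

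\emph{Where the difficulty lies.} The Absorption Theorem does the heavy lifting in Case~1, so the real work is Case~2: verifying that the "linking component" relation is genuinely a congruence (not merely an equivalence relation), and extracting from "$R$ not linked" plus "$\algb$ simple" that $R$ collapses to a graph. The subtle point is that a priori the fibre $\{a : (a,b)\in R\}$ over a single $b$ need not be a single $\sim$-class — the argument that it is uses both that $b$ forms its own component (hence $\algb/{\sim_B} = 0_B$ by simplicity) and that $R$ is subdirect. I would also be slightly careful about trivial edge cases ($|A| = 1$ or $|B| = 1$), which are handled separately and trivially. Everything else — closure of $\sim$ under $\cdot$, that the induced map respects $\cdot$ — is routine from $R \leq \alga \times \algb$.
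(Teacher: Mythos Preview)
Your proof is correct and uses essentially the same approach as the paper's: the Absorption Theorem for the linked case, and the linking congruence on $\algb$ together with simplicity of $\algb$ for the complementary case. The only difference is organizational --- the paper assumes $R$ is not the graph of a function, shows this makes the linking congruence on $\algb$ nontrivial (hence $1_B$, hence $R$ linked, hence $R=A\times B$ by absorption), while you run the contrapositive; your Case~2 is also a bit more elaborate than needed, since once $\sim_B=0_B$ you get immediately that $(a,b_1),(a,b_2)\in R$ forces $b_1=b_2$, so $R$ is a graph without ever invoking the congruence $\sim$ on $A$ or the boxes $C_b$.
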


\begin{proof}
If $R$ is the graph of a function, it is easily checked that it is necessarily the graph of a homomorphism. Therefore, if we assume $R$ is not the graph of a homomorphism, subdirectness of $R$ guarantees that there is some $a\in A$ and distinct $b_1,b_2\in B$ with both $(a,b_1)$ and $(a,b_2)\in R$. The relation
$$\tau=\{(c,d)\in B^2:\text{ there is }a\in A\text{ such that }(a,c),(a,d)\in R\}$$
is a symmetric and reflexive subuniverse of $\algb^2$, so its transitive closure, $\alpha$, is a congruence. We also have that $(b_1,b_2)\in\tau\subseteq\alpha$, so $\alpha=B\times B$ because $b_1\neq b_2$ and $\algb$ is simple. It follows that $R$ is linked. We also have that $\alga$ and $\algb$ are absorption free by Lemma~\ref{stronglyconnectedabsorptionfree}, and since $\cdot$ is commutative and idempotent, it is a Taylor operation for $\cc{S}$. The conditions of Theorem~\ref{absorptiontheorem} are satisfied, so $R=A\times B$.
\end{proof}

\begin{lem}\label{simplescrelations}
Let $\alga,\algb\in\cc{S}$ be finite, strongly connected, and simple with $R\leq_{\sd}\alga\times\algb$. Either $R=A\times B$ or $R$ is the graph of a bijection.
\end{lem}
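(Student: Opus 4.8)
The statement to prove is Lemma~\ref{simplescrelations}: if $\alga,\algb\in\cc{S}$ are finite, strongly connected, and simple, and $R\leqsd\alga\times\algb$, then either $R=A\times B$ or $R$ is the graph of a bijection. The plan is to invoke Lemma~\ref{edgerelations} twice, once in each direction, and then argue that the two homomorphisms produced are mutually inverse.

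**Step 1.** Apply Lemma~\ref{edgerelations} with $\algb$ in the role of the simple factor. Since $R\leqsd\alga\times\algb$ with $\algb$ simple (and strongly connected), the lemma gives two cases: either $R=A\times B$, in which case we are done, or $R=\{(a,\varphi(a)):a\in A\}$ for a surjective homomorphism $\varphi:\alga\to\algb$. So assume the second case.

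**Step 2.** Now consider $R^{-1}=\{(\varphi(a),a):a\in A\}\leqsd\algb\times\alga$. Here $\alga$ plays the role of the simple factor, and it is also strongly connected, so Lemma~\ref{edgerelations} applies again: either $R^{-1}=B\times A$ or $R^{-1}$ is the graph of a surjective homomorphism $\psi:\algb\to\alga$. But $R^{-1}=B\times A$ would force $R=A\times B$, handled already; moreover $R^{-1}$ being the graph of $\varphi^{-1}$-style relation already tells us $R$ itself is the graph of a function on $A$, i.e. $\varphi$ is injective, hence bijective (it is already surjective). One can see this directly: $R^{-1}$ being a graph of a function $\psi:B\to A$ means that for each $b\in B$ there is exactly one $a$ with $\varphi(a)=b$, which is precisely injectivity of $\varphi$. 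Then $\psi=\varphi^{-1}$, and $\varphi$ is a bijective homomorphism, so $R$ is the graph of a bijection.

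**Main obstacle.** The only subtlety is verifying that the second case of Lemma~\ref{edgerelations} applied to $R^{-1}$ genuinely forces $R$ to be a graph of a bijection and not merely that $R^{-1}=B\times A$; but $R^{-1}=B\times A$ is equivalent to $R=A\times B$, which contradicts the standing assumption that we are in the non-full case from Step~1 (indeed if $R$ is the graph of the function $\varphi$ and $|A|\geq 2$, then $R\neq A\times B$ unless $|B|=1$, and if $|B|=1$ then $\varphi$ is trivially a bijection onto a one-element algebra only when $|A|=1$ too, which is a degenerate subcase one checks separately). So the argument reduces to bookkeeping of these small cases, and there is no real difficulty: the whole proof is essentially "apply Lemma~\ref{edgerelations} to $R$ and to $R^{-1}$."
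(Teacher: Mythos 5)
Your proposal is correct and follows essentially the same route as the paper: apply Lemma~\ref{edgerelations} to both $R\leqsd\alga\times\algb$ and $R^{-1}\leqsd\algb\times\alga$, and conclude that in the non-full case the surjective homomorphism must be a bijection. The paper finishes with the cardinality observation that surjections in both directions between finite sets force $|A|=|B|$, whereas you argue injectivity directly from $R^{-1}$ being the graph of a function; both closings are immediate and equivalent.
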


\begin{proof}
By Lemma~\ref{edgerelations}, if $R\neq A\times B$, then it is both the graph of a surjective homomorphism from $\alga$ to $\algb$ and vice versa. Since $A$ and $B$ are finite, this means $|A|=|B|$, so each of these surjections is a bijection.
\end{proof}

The next Lemma is Lemma~3.8 from \cite{bulatov1}. The proof is technical and can be found there.

\begin{lem}[Lemma~3.8 from \cite{bulatov1}]\label{bul3.8}
Let $\alga_1,\alga_2,\alga_3\in\cc{S}$ be finite and strongly connected. Suppose $T\leqsd\alga_1\times\alga_2\times\alga_3$ satisfies the following:
\begin{enumerate}
\item $\alga_3$ is simple,
\item $\pr_{1,2}(T)$ is strongly connected,
\item $\pr_{i,3}(T) = A_i\times A_3$ for $i=1,2$.
\end{enumerate}
Then $T=\pr_{1,2}(T)\times A_3$. 
\end{lem}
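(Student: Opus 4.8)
\emph{Proof idea.} The statement is vacuous if $|A_3|=1$, so assume $|A_3|\geq 2$. Put $\algs=\pr_{1,2}(T)$ and regard $T\subseteq(\alga_1\times\alga_2)\times\alga_3$ as a binary relation $R$ between $\algs$ and $\alga_3$. Then $R\leqsd\algs\times\alga_3$: its first projection is $S$ by the definition of $\algs$, and its second projection is $\pr_3(T)=A_3$ because $\pr_{1,3}(T)=A_1\times A_3$. By hypothesis $\algs$ and $\alga_3$ are finite strongly connected members of $\cc{S}$, hence absorption free by Lemma~\ref{stronglyconnectedabsorptionfree}, and $\alga_3$ is simple; so Lemma~\ref{edgerelations} applies to $R$. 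Either $R=S\times A_3$ --- which is precisely the desired conclusion $T=\pr_{1,2}(T)\times A_3$ --- or $R$ is the graph of a surjective homomorphism $\varphi\colon\algs\to\alga_3$. The plan is to show the second alternative is impossible.

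So assume $T=\{(a,b,\varphi(a,b)):(a,b)\in S\}$. Write $\pi_i\colon\algs\to\alga_i$ for the coordinate projections and $\theta_i=\ker\pi_i$, so $\theta_1\wedge\theta_2=0_S$. I would first observe that $\ker\varphi\not\supseteq\theta_1$: if $\ker\varphi\supseteq\theta_1$, then $\varphi$ factors as $\psi\circ\pi_1$ for a homomorphism $\psi\colon\alga_1\to\alga_3$, so $\pr_{1,3}(T)$ is the graph of $\psi$ and hence a proper subset of $A_1\times A_3$ (as $|A_3|\geq 2$), contradicting condition~(3); by the same argument $\ker\varphi\not\supseteq\theta_2$. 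Unwinding condition~(3) once more, $\varphi$ restricts to a surjection onto $A_3$ on each fibre $\pi_1^{-1}(a)$ and on each fibre $\pi_2^{-1}(b)$. A convenient reformulation leading into the hard step is to view $T$ instead as a subdirect relation $R'\leqsd\alga_1\times(\alga_2\times\alga_3)$: here both factors are finite strongly connected $2$-semilattices --- using Lemma~\ref{digraphproperties}(1), a finite product of strongly connected $2$-semilattices is again strongly connected --- hence both are absorption free, so if $R'$ happens to be \emph{linked} then the Absorption Theorem (Theorem~\ref{absorptiontheorem}) forces $T=A_1\times A_2\times A_3$, again giving the conclusion. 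Thus it suffices to derive a contradiction from the situation in which $R$ is the graph of $\varphi$ and $R'$ is not linked.

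This last step is the technical heart of the statement (it is where Bulatov's intricate argument in \cite{bulatov1} does its work), and I expect it to be the main obstacle. The approach I would take is to exhibit a proper absorbing subuniverse inside a \emph{strongly connected} subalgebra of $\alga_1\times\alga_2$ and then invoke Lemma~\ref{stronglyconnectedabsorptionfree}: pick $c_0\neq c_1$ in $A_3$ with $c_0\dig c_1$ --- possible since $\alga_3$ is strongly connected with at least two elements --- so that by Lemma~\ref{digraphproperties}(5) the subalgebra $\la\{c_0,c_1\};\cdot\ra$ is a two-element semilattice with absorbing element $c_1$; pulling $\{c_1\}$ back along the surjection $\varphi^{-1}(\{c_0,c_1\})\twoheadrightarrow\la\{c_0,c_1\};\cdot\ra$ produces a proper absorbing subuniverse $\varphi^{-1}(c_1)\vartriangleleft\varphi^{-1}(\{c_0,c_1\})$, which by condition~(3) projects onto both $A_1$ and $A_2$; one then has to descend to the smallest strongly connected component of a suitable subdirect subalgebra of $\alga_1\times\alga_2$ and use the non-linkedness of $R'$ together with the simplicity of $\alga_3$ to keep that absorbing set proper there. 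The delicacy --- and the reason the argument is intricate --- is precisely that the absorbing set tends to be swallowed by the smallest component, so the bookkeeping of components, projections, and the graph map $\varphi$ must be done carefully to extract a genuine contradiction with absorption freeness.
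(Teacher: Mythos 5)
Your reductions are correct as far as they go: viewing $T$ as a subdirect binary relation $R\leqsd\pr_{1,2}(T)\times\alga_3$ and invoking Lemma~\ref{edgerelations} does reduce the lemma to excluding the case where $R$ is the graph of a surjective homomorphism $\varphi\colon\pr_{1,2}(T)\to\alga_3$; your observations that $\ker\varphi$ contains neither projection kernel, that $\varphi$ is onto $A_3$ on every $\pi_i$-fibre, and that a linked $R'$ would finish matters via Theorem~\ref{absorptiontheorem} are all sound. But the proof stops exactly where the content of the lemma begins. Excluding the graph case \emph{is} Bulatov's Lemma~3.8 --- the paper deliberately does not reprove it and simply cites \cite{bulatov1} for the ``technical'' argument --- and your sketch does not dispose of it. Worse, the specific absorbing pair you propose cannot work: since $\varphi$ is a homomorphism, it carries directed walks to directed walks, so by Lemma~\ref{digraphproperties}(3) the smallest strongly connected component of $\varphi^{-1}(\{c_0,c_1\})$ is mapped into the smallest strongly connected component of the two-element semilattice $\{c_0,c_1\}$, which is $\{c_1\}$. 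Hence that component lies entirely inside $\varphi^{-1}(c_1)$, the intersection of your absorbing set with the component is the whole component, and Lemma~\ref{stronglyconnectedabsorptionfree} yields no contradiction. This is not a bookkeeping issue to be patched later; the proposed witness is always swallowed, so a genuinely different construction (this is where Bulatov's intricate case analysis, or an equivalent absorption argument exploiting that $\ker\varphi$ meets both $\theta_1$ and $\theta_2$ nontrivially together with simplicity of $\alga_3$, is needed) must replace it. As written, the proposal is an honest reduction plus an acknowledged hole at the decisive step, so it does not constitute a proof of the lemma.
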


Definition~\ref{decompdfn} is very similar to Definition~8.2 of a {\it decomposition} from \cite{bkbounded}. The idea is that when we have an instance in which every potato and constraint is strongly connected, if there is a potato with more than one element, we can decompose the instance into one instance for each class of a maximal congruence on that potato.

\begin{dfn}\label{decompdfn}
Let $\cc{I}$ be a \adj{} of $\csp(\algd)$ for some $\algd\in\cc{S}$ in which every potato and relation is strongly connected. Further suppose that there is some $u\in X$ such that $|P_u|>1$. Choose a maximal congruence, $\alpha_u$ of $\algp_u$. Let $W\subseteq X$ be the set of variables such that $\{(a,b/\alpha_u):(a,b)\in\algr_{xu}\}$ is the graph of a surjective homomorphism from $\algp_x$ to $\algp_u/\alpha_u$, and for each $x\in W$, let $\varphi_{xu}$ be this surjection. Now let $P_u^1,P_u^2,\dots,P_u^k$ be the $\alpha_u$ classes in $\algp_u$, and define instances $\cc{I}_1,\dots,\cc{I}_k$, each with variable set $X$ by
\begin{enumerate}
\item[-] $P_x^{\cc{I}_i} = \varphi_{xu}^{-1}(P_u^i)$ if $x\in W$, and $P_x$, otherwise.
\item[-] $R_{xy}^{\cc{I}_i} = R_{xy}\cap(P_x^{\cc{I}_i}\times P_y^{\cc{I}_i})$
\end{enumerate}
\end{dfn}

Since $R_{uu}=0_{P_u}$, we have that $u\in W$ and $P_u^{\cc{I}_i}=P_u^i$. We now state an easy Lemma based on Definition~\ref{decompdfn}.

\begin{lem}\label{itsadecomp}\hspace{1cm}
\begin{enumerate}
\item With $W$ as in Defintion~\ref{decompdfn}, if $x,\in W$, then $R_{xy}\cap (P_x^{\cc{I}_i}\times P_y)=R_{xy}\cap (P_x^{\cc{I}_i}\times P_y^{\cc{I}_i})=R_{xy}^{\cc{I}_i}$ for every $y\in X$. The second equality is by definition.
\item Let $\cc{I}$ be a \adj{} of $\csp(\algd)$ for $\algd\in\cc{S}$ with strongly connected potatoes and relations. Suppose $|P_u|>1$ and $\cc{I}_1,\dots,\cc{I}_k$ is the decomposition from Definition~\ref{decompdfn}. If $s$ is a solution to $\cc{I}$, then it is a solution to $\cc{I}_i$ for some $i$.
\item For $x\in W$, there is a congruence, $\alpha_x$ on $\algp_x$ such that $\algp_x/\alpha_x\cong\algp_u/\alpha_u$ via the isomorphism $P_x^{\cc{I}_i}\mapsto P_u^{\cc{I}_i}$.
\end{enumerate}
\end{lem}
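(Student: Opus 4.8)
All three parts are short; the only real content is part (1), and (2) and (3) follow quickly once it is in hand.

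For part (1), the second equality holds by the definition of $R_{xy}^{\cc{I}_i}$, and the inclusion $R_{xy}\cap(P_x^{\cc{I}_i}\times P_y^{\cc{I}_i})\subseteq R_{xy}\cap(P_x^{\cc{I}_i}\times P_y)$ is trivial since $P_y^{\cc{I}_i}\subseteq P_y$. So I only need: if $(a,b)\in R_{xy}$ with $a\in P_x^{\cc{I}_i}$, then $b\in P_y^{\cc{I}_i}$. If $y\notin W$ this is immediate because $P_y^{\cc{I}_i}=P_y$. If $y\in W$, apply (P2) to the variables $x,y,u$ and the pair $(a,b)\in R_{xy}$ to get $c\in P_u$ with $(a,c)\in R_{xu}$ and $(b,c)\in R_{yu}$. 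Since $x\in W$, the graph of $\varphi_{xu}$ is $\{(a',c'/\alpha_u):(a',c')\in R_{xu}\}$, so $\varphi_{xu}(a)=c/\alpha_u$; but $a\in P_x^{\cc{I}_i}=\varphi_{xu}^{-1}(P_u^i)$ forces $c/\alpha_u=P_u^i$, i.e.\ $c\in P_u^i$. Running the same argument with $y$ in place of $x$ (using $y\in W$ and $(b,c)\in R_{yu}$) gives $\varphi_{yu}(b)=c/\alpha_u=P_u^i$, hence $b\in\varphi_{yu}^{-1}(P_u^i)=P_y^{\cc{I}_i}$, as required.

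For part (2), let $s$ be a solution to $\cc{I}$. Since $u\in W$ and the $P_u^{\cc{I}_i}=P_u^i$ are exactly the $\alpha_u$-classes of $\algp_u$, there is a unique $i$ with $s(u)\in P_u^i$; I claim $s$ solves $\cc{I}_i$. For $x\notin W$ we have $s(x)\in P_x=P_x^{\cc{I}_i}$. For $x\in W$, the pair $(s(x),s(u))\in R_{xu}$ gives $\varphi_{xu}(s(x))=s(u)/\alpha_u=P_u^i$, so $s(x)\in\varphi_{xu}^{-1}(P_u^i)=P_x^{\cc{I}_i}$. Thus $s(x)\in P_x^{\cc{I}_i}$ for all $x$, and then for every $x,y$ the pair $(s(x),s(y))\in R_{xy}$ lies in $R_{xy}\cap(P_x^{\cc{I}_i}\times P_y^{\cc{I}_i})=R_{xy}^{\cc{I}_i}$, so $s$ is a solution to $\cc{I}_i$.

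For part (3), set $\alpha_x=\ker(\varphi_{xu})$, which is a congruence on $\algp_x$ since $\varphi_{xu}\colon\algp_x\to\algp_u/\alpha_u$ is a (surjective) homomorphism. By the first isomorphism theorem $\algp_x/\alpha_x\cong\algp_u/\alpha_u$, and under this isomorphism the class $\varphi_{xu}^{-1}(P_u^i)=P_x^{\cc{I}_i}$ is sent to $P_u^i=P_u^{\cc{I}_i}$, which is exactly the stated correspondence.

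I do not anticipate a genuine obstacle here; the one place to be careful is part (1), where one must invoke (P2) for the triple $(x,y,u)$ rather than for $(x,y)$ alone and then read off the $\alpha_u$-class of the apex $c$ from the fact that $\varphi_{xu}$ and $\varphi_{yu}$ are functions. Everything else is bookkeeping with the definitions in Definition~\ref{decompdfn}.
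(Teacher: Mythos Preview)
Your proof is correct and follows essentially the same route as the paper: for (1) you invoke (P2) on the triple $(x,y,u)$ to produce $c$ and then read off its $\alpha_u$-class via $\varphi_{xu}$ and $\varphi_{yu}$, for (2) you locate $s(u)$ in some $P_u^i$ and propagate via $\varphi_{xu}$, and for (3) you take $\alpha_x=\ker(\varphi_{xu})$ and apply the first isomorphism theorem. The paper's argument is the same in every substantive step.
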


\begin{proof}
For (1), the left to right inclusion is the interesting one. To see that it is true, we first note that if $y\notin W$, then $P_y^{\cc{I}_i}=P_y$, so there is nothing to prove. From now on, we assume that $y\in W$ as well. Suppose $(a,b)\in R_{xy}\cap (P_x^{\cc{I}_i}\times P_y)$. By (P2) of $\cc{I}$, we get some $c\in P_u$ such that $(a,c)\in R_{xu}$ and $(b,c)\in R_{yu}$. Since $x\in W$ and $a\in P_x^{\cc{I}_i}$, we have that $c\in P_u^{\cc{I}_i}$. This is because $(a,c)\in R_{xu}$ means $c\in \varphi_{xu}(a)=P_u^{\cc{I}_i}$. Since $y\in W$ and $(b,c)\in R_{yu}$, the same reasoning in reverse puts $b\in\varphi_{yu}^{-1}(P_u^{\cc{I}_i})= P_y^{\cc{I}_i}$. As for (2), because of the way the $R_{xy}^{\cc{I}_i}$ are defined, it suffices to show that there is some $i$ such that $s(x)\in P_x^{\cc{I}_i}$ for all $x\in X$. Since the $P_u^{\cc{I}_1},\dots,P_u^{\cc{I}_k}$ partition $P_u$, there is some $i$ such that $s(u)\in P_u^{{\cc{I}_i}}$. This $i$ does the trick. For $x\in W$, since $(s(x),s(u))\in R_{xu}$, we have that $s(x)\in\varphi_{xu}^{-1}(P_u^{\cc{I}_i})=P_x^{\cc{I}_i}$, and for $x\notin W$, $s(x)\in P_x^{\cc{I}_i}$ since it is all of $P_x$. Part (3) is simply because the $P_x^{\cc{I}_i}$ are chosen to be the classes of the kernel of $\varphi_{xu}:P_x\to P_u/\alpha_u$.
\end{proof}

\begin{lem}\label{nonsimplepotato}
Let $\cc{I}$ be a \adj{} of $\csp(\algd)$ for some $\algd\in \cc{S}$ in which each potato and relation is strongly connected and $u\in X$ is such that $|P_u|>1$. Choose a maximal congruence $\alpha_u$ on $\algp_u$ and construct $\cc{I}_1,\dots,\cc{I}_k$ as in Definition~\ref{decompdfn}. Then each $\cc{I}_i$ is \adj.
\end{lem}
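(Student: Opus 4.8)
The goal is to show each $\cc{I}_i$ satisfies the two conditions (P1) and (P2) that, by the remark following Definition~\ref{ourcsp}, suffice to make it a \adj{}. First I would dispose of (P1): since $R_{uu}=0_{P_u}$ forces $u\in W$ and $P_u^{\cc{I}_i}=P_u^i$, and since for every $x\in X$ the relation $R_{xx}=0_{P_x}$, the intersection $R_{xx}\cap(P_x^{\cc{I}_i}\times P_x^{\cc{I}_i})$ is again the equality relation on $P_x^{\cc{I}_i}$, which is exactly (P1) for $\cc{I}_i$.

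The real work is (P2): given $x,y,z\in X$ and $(a,b)\in R_{xy}^{\cc{I}_i}$, I must produce $c\in P_z^{\cc{I}_i}$ with $(a,c)\in R_{xz}^{\cc{I}_i}$ and $(b,c)\in R_{yz}^{\cc{I}_i}$. Using (P2) for $\cc{I}$ we already get some $c\in P_z$ with $(a,c)\in R_{xz}$ and $(b,c)\in R_{yz}$; the issue is forcing $c$ into the correct block $P_z^{\cc{I}_i}$. Here I would split on whether $z\in W$. If $z\in W$, then $\varphi_{zu}$ is defined, and Lemma~\ref{bul3.8} should be the tool: applying it to $T=\pr_{x,z,u}$ of the appropriate triangle algebra (with $\alga_3=\algp_u/\alpha_u$, which is simple since $\alpha_u$ is maximal) shows $T$ is a full product over the $u$-coordinate, so the walk/absorption machinery of Lemma~\ref{23minstrong}'s proof lets me adjust $c$ along the digraph so that $c\in\varphi_{xu}^{-1}(P_u^i)=P_z^{\cc{I}_i}$ while keeping $(a,c)\in R_{xz}$, hence $(a,c)\in R_{xz}\cap(P_x^{\cc{I}_i}\times P_z)=R_{xz}^{\cc{I}_i}$ by Lemma~\ref{itsadecomp}(1); then $(b,c)\in R_{yz}^{\cc{I}_i}$ follows from Lemma~\ref{itsadecomp}(1) applied with the roles of the variables swapped, once I check $b\in P_y^{\cc{I}_i}$ (which holds because $(a,b)\in R_{xy}^{\cc{I}_i}\subseteq P_x^{\cc{I}_i}\times P_y^{\cc{I}_i}$). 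If $z\notin W$, then $P_z^{\cc{I}_i}=P_z$ and the original $c$ already works, again invoking Lemma~\ref{itsadecomp}(1) to land the pairs in the right restricted relations.

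I also need to check that $\cc{I}_i$ is nonempty, i.e.\ that $P_z^{\cc{I}_i}\neq\emptyset$ for all $z$: for $z\in W$ this is because $\varphi_{zu}$ is onto $\algp_u/\alpha_u$, so $\varphi_{zu}^{-1}(P_u^i)\neq\emptyset$; for $z\notin W$ it is $P_z\neq\emptyset$. (Emptiness would not actually violate the definition of \adj, but it is worth noting the instances are nonempty.)

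**Main obstacle.** The delicate point is the case $z\in W$: showing that the witness $c$ from (P2) of $\cc{I}$ can be replaced by one in the prescribed $\alpha_u$-block. This is where I expect to lean hardest on the hypotheses — strong connectedness of all potatoes and relations, simplicity of $\algp_u/\alpha_u$, and Lemmas~\ref{edgerelations}, \ref{simplescrelations}, and \ref{bul3.8} — to argue that the relation among $x$-, $z$-, and $u$-coordinates is ``full enough'' over $u$ that the choice of $u$-coordinate (and hence of block) is unconstrained given a fixed $z$-coordinate, or else, using the directed-walk argument from the proof of Lemma~\ref{23minstrong}, that $c$ can be slid along $\overset{\algr_{xz}}{\dig}$ into the right block. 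Getting the bookkeeping of which projections are full products exactly right, and verifying the hypotheses of Lemma~\ref{bul3.8} hold for the triangle algebra in question, is the technical heart of the argument.
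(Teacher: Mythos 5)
Your treatment of (P1), the nonemptiness remark, and the case $z\notin W$ is fine, but the heart of the argument --- forcing the witness $c$ into the block $P_z^{\cc{I}_i}$ when $z\in W$ --- has a genuine gap. You propose to apply Lemma~\ref{bul3.8} to the triangle algebra on the coordinates $(x,z,u)$ with $\alga_3=\algp_u/\alpha_u$. But hypothesis (3) of Lemma~\ref{bul3.8} requires $\pr_{2,3}(T)=P_z\times(P_u/\alpha_u)$, and since $z\in W$ the projection of that triangle onto the $(z,u/\alpha_u)$-coordinates is contained in the graph of $\varphi_{zu}$, a proper subset of the full product (as $|P_u/\alpha_u|>1$). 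So that application of Lemma~\ref{bul3.8} is unavailable, and the subsequent ``slide $c$ along the digraph into the right block'' step has no support; indeed, when $x\in W$ the block of $c$ is completely determined by $a$ (via $\varphi_{xu}$ and the identification in Lemma~\ref{itsadecomp}(3)), so no adjustment of the block is possible at all.

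The repair requires a further case split on whether $x$ or $y$ lies in $W$, together with a preliminary fact you have not established. If $x\in W$ (or symmetrically $y\in W$), no adjustment is needed: $a\in P_x^{\cc{I}_i}$ gives $(a,c)\in R_{xz}\cap(P_x^{\cc{I}_i}\times P_z)=R_{xz}^{\cc{I}_i}$ by Lemma~\ref{itsadecomp}(1), hence $c\in P_z^{\cc{I}_i}$ automatically, and then $(b,c)\in R_{yz}^{\cc{I}_i}$ follows. If $x,y\notin W$, one applies Lemma~\ref{bul3.8} to $T=\{(a,b,c/\alpha_z):(a,b)\in R_{xy},\ (a,c)\in R_{xz},\ (b,c)\in R_{yz}\}\leqsd\algp_x\times\algp_y\times\algp_z/\alpha_z$, i.e.\ with third coordinate $z/\alpha_z$ rather than $u/\alpha_u$. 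Verifying hypothesis (3) there requires first proving that for $y\notin W$ and $z\in W$ the relation $R_{yz}^{\alpha_z}=\{(a,c/\alpha_z):(a,c)\in R_{yz}\}$ equals the full product $P_y\times(P_z/\alpha_z)$: by Lemma~\ref{edgerelations} and simplicity of $\algp_z/\alpha_z$ it is either full or the graph of a surjective homomorphism, and in the latter case a (P2) argument routed through the variable $u$ shows that $\{(a,c/\alpha_u):(a,c)\in R_{yu}\}$ is also the graph of a homomorphism, forcing $y\in W$, a contradiction. This preliminary full-product lemma is the missing ingredient; without it neither Lemma~\ref{bul3.8} nor any sliding argument places $c$ in the prescribed block.
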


\begin{proof}
For any $x\in W$, let $\alpha_x$ be the kernel of $\varphi_{xu}$ from Definition~\ref{decompdfn}. We first show that for any $x\in W$ and $y\notin W$ that $R_{yx}^{\alpha_x}:=\{(a,b/\alpha_x):(a,b)\in R_{yx}\}=P_y\times P_x/\alpha_x$. Since $R_{yx}\leq_{\sd}\algp_y\times\algp_x$, we get that $R_{yx}^{\alpha_x}\leq_{\sd}\algp_y\times\algp_x/\alpha_x$. By Lemma~\ref{itsadecomp}~(3), $\algp_x/\alpha_x\cong \algp_u/\alpha_u$ and since $\alpha_u$ is maximal, $\algp_x/\alpha_x$ is simple. It then follows from Lemma~\ref{edgerelations} that $R_{yx}^{\alpha_x}$ is either the graph of a surjective homomorphism or the full product. We will show that if it is the graph of a surjective homomorphism, then $y\in W$. Assume $R_{yx}^{\alpha_x}$ is the graph of a surjective homomorphism from $\algp_y$ to $\algp_x/\alpha_x$ and assume $(a,c_1),(a,c_2)\in R_{yu}$. Since (P2) holds in $\cc{I}$, there are $b_1,b_2\in P_x$ with $(a,b_1),(a,b_2)\in R_{yx}$ and $(b_1,c_1),(b_2,c_2)\in R_{xu}$. Since $R_{yx}^{\alpha_x}$ is the graph of a surjective homomorphism, the former implies $b_1/\alpha_x=b_2/\alpha_x$. From this and the latter, we get that $c_1/\alpha_u=c_2/\alpha_u$. This means $\{(a,b/\alpha_u):(a,b)\in R_{yu}\}$ is the graph of a surjective homomorphism, so $y\in W$. Therefore, if $y\notin W$ and $x\in W$, $R_{yx}^{\alpha_x}=P_y\times P_x/\alpha_x$, as desired. We will now show that each $\cc{I}_i$ is a \adj{}.

By definition, $R_{xy}^{\cc{I}_i}\subseteq P_x^{\cc{I}_i}\times P_y^{\cc{I}_i}$. Each $\cc{I}_i$ also satisfies (P1) because of the way the $R_{xy}^{\cc{I}_i}$ were defined. We now fix and $i$ and show that $\cc{I}_i$ satisfies (P2). This means we need to show that for any $z\in X$ and $(a,b)\in R_{xy}^{\cc{I}_i}$, there is $c\in P_z^{\cc{I}_i}$ such that $(a,c)\in R_{xz}^{\cc{I}_i}$ and $(b,c)\in R_{yz}^{\cc{I}_i}$. We do know that there is some $c\in P_z$ such that $(a,c)\in R_{xz}$ and $(b,c)\in R_{yz}$. If $z\notin W$, then $P_z^{\cc{I}_i}=P_z$ and there is nothing to show. Therefore, from now on, we assume $z\in W$. First, suppose $x\in W$ and $(a,b)\in R_{xy}^{\cc{I}_i}$. (P2) for $\cc{I}$ provides $c\in P_z$ such that $(a,c)\in R_{xz}$ and $(b,c)\in R_{yz}$. Since $(a,b)\in R_{xy}^{\cc{I}_i}$, we know that $a\in P_x^{\cc{I}_i}$, so $(a,c)\in R_{xz}\cap(P_x^{\cc{I}_i}\times P_z)$ which equals $R_{xz}^{\cc{I}_i}$ by Lemma~\ref{itsadecomp}~(1), so $c\in P_z^{\cc{I}_i}$. The case when $y\in W$ is similar, so the remaining case is when $x,y\notin W$. First we set
$$T=\{(a,b,c/\alpha_z):(a,b)\in R_{xy},(a,c)\in R_{xz},\text{ and }(b,c)\in R_{yz}\}.$$
Now we set $\alga_1=\algp_x,\alga_2=\algp_y$, and $\alga_3=\algp_z/\alpha_z$. Since $\alga_3\cong \algp_u/\alpha_u$, $\alga_3$ is simple. Since (P2) holds for $\cc{I}$ and $x,y\notin W$, we have $\pr_{1,2}(T)=R_{xy}$, which is strongly connected by assumption. By the first paragraph of this proof and the fact that $\cc{I}$ has (P2), we also get that $\pr_{i,3}=A_i\times A_3$ for $i=1,2$. The conditions of Lemma~\ref{bul3.8} are satisfied, so $T=R_{xy}\times (P_z/\alpha_z)$. Therefore, $(a,b,P_z^{\cc{I}_i})\in T$, so there is some $c\in P_z^i$ with $(a,c)\in R_{xz}$ and $(b,c)\in R_{yz}$. Since $a\in P_x^{\cc{I}_i}$, $b\in P_y^{\cc{I}_i}$, and $c\in P_z^{\cc{I}_i}$, we actually have $(a,c)\in R_{xz}^{\cc{I}_i}$ and $(b,c)\in R_{yz}^{\cc{I}_i}$. This completes the proof that $\cc{I}_i$ satisfies (P2).

\end{proof}

To finish off this section, we will give a proof of Bulatov's result from \cite{bulatov1}. The next definition gives notation to turn Bulatov's proof into a constructive one.

\begin{dfn}\label{inequalitiesdfn}
Let $\cc{I}$ be a \adj{} of $\csp(\algd)$ for some $\algd\in\cc{S}$, and suppose $\cc{J}$ is a subinstance of $\cc{I}$.
\begin{enumerate}
\item We write $\cc{I}\geqstrong\cc{J}$ if $P_x^{\cc{J}}=(P_x^{\cc{I}})'$ for each $x$, and $R_{xy}^{\cc{J}}=(R_{xy}^{\cc{I}})'$ for each $(x,y)\in X^2$.
\item We write $\cc{I}\geqcong\cc{J}$ if $\cc{J}$ is one of the subinstances $\cc{I}_1,\dots,\cc{I}_k$ from Definition~\ref{decompdfn}. We are implicitly assuming that every potato and relation in $\cc{I}$ is strongly connected, and for some $u\in X$, $|P_u|>1$.
\end{enumerate}
\end{dfn}

Here is the main result of \cite{bulatov1}, rephrased in our language.

\begin{thm}[Theorem 3.1 in \cite{bulatov1}]\label{bulatovsresult}
Let $\cc{I}$ be a \adj{} of $\csp(\algd)$ where $\algd\in\cc{S}$ and all potatoes are nonempty. Then $\cc{I}$ has a solution.
\end{thm}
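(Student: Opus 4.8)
The plan is to prove Theorem~\ref{bulatovsresult} by induction on a well-founded measure on \adj{}s, using the two reductions $\geqstrong$ and $\geqcong$ recorded in Definition~\ref{inequalitiesdfn}. Given a nonempty \adj{} $\cc{I}$, if it is not already the case that every potato and relation is strongly connected, we pass to $\cc{I}'$ via Lemma~\ref{23minstrong}, which is again a nonempty \adj{}; a solution of $\cc{I}'$ is certainly a solution of $\cc{I}$, since each $P_x' \subseteq P_x$ and $R_{xy}' \subseteq R_{xy}$. So we may assume every potato and every relation of $\cc{I}$ is strongly connected. If some $|P_u| > 1$, we choose a maximal congruence $\alpha_u$ on $\algp_u$ and form the decomposition $\cc{I}_1, \dots, \cc{I}_k$ of Definition~\ref{decompdfn}; each $\cc{I}_i$ is a \adj{} by Lemma~\ref{nonsimplepotato}, and each is nonempty because $P_u^{\cc{I}_i} = P_u^i \neq \emptyset$ forces (via (P2) and the definition) every $P_x^{\cc{I}_i}$ to be nonempty. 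A solution of any $\cc{I}_i$ is a solution of $\cc{I}$, so it suffices to solve one of them. The remaining base case is when every potato and relation is strongly connected \emph{and} every potato is a singleton; then the unique assignment $s(x) = $ the element of $P_x$ is forced to be a solution, because each $R_{xy}$ is a nonempty subdirect subuniverse of $\algp_x \times \algp_y = \{s(x)\}\times\{s(y)\}$, hence equals $\{(s(x),s(y))\}$.

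For the induction to go through I need a measure that strictly decreases under both $\geqstrong$ and $\geqcong$ (when these reductions are nontrivial). The natural choice is something like the lexicographically-ordered pair (or a sum) built from $\sum_{x}|P_x|$ together with $\sum_{x,y}|R_{xy}|$, or more robustly the pair $\bigl(\sum_x |P_x|, \sum_{x,y}|R_{xy}|\bigr)$ ordered lexicographically. The $\geqstrong$ reduction, when it does something, either shrinks some $P_x$ (passing to its smallest strongly connected component $P_x'$) or, if all potatoes are already strongly connected as digraphs, shrinks some relation $R_{xy}$ to $R_{xy}'$; in the latter situation I should check that if no potato \emph{and} no relation can be properly shrunk this way, then indeed everything is strongly connected, so that $\geqstrong$ is trivial exactly when the hypotheses of the $\geqcong$ step or of the base case are met. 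The $\geqcong$ reduction strictly decreases $\sum_x |P_x|$ because $|P_u^{\cc{I}_i}| = |P_u^i| < |P_u|$ (as $\alpha_u$ is a proper congruence, $k \geq 2$), while no potato grows. Hence the measure strictly drops, and well-foundedness gives the result.

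The bulk of the real content is already isolated in the two lemmas I will invoke, so the main obstacle is organizational rather than computational: I must verify that the case analysis is exhaustive and that the reductions interlock correctly — specifically, that after applying $\geqstrong$ maximally we are genuinely in the ``all potatoes and relations strongly connected'' regime where $\geqcong$ applies (this uses that $R_{xy}'$ being strongly connected for all $x,y$ forces, via Lemma~\ref{digraphproperties}, that $P_x'$ is strongly connected too, and conversely), and that the base case really is forced. A secondary point requiring care is that $\geqstrong$ applied to an already strongly connected instance is the identity: $(P_x)' = P_x$ and $(R_{xy})' = R_{xy}$ when these are strongly connected, so there is no risk of an infinite descent that never reaches a decomposable or base instance. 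Once these bookkeeping points are nailed down, the proof is a short recursion: \textbf{(i)} if not strongly connected everywhere, replace $\cc{I}$ by $\cc{I}'$ (Lemma~\ref{23minstrong}) and recurse; \textbf{(ii)} else if some $|P_u| > 1$, replace $\cc{I}$ by some $\cc{I}_i$ (Lemma~\ref{nonsimplepotato}) and recurse; \textbf{(iii)} else output the unique singleton assignment. In each recursive step the solution found lifts back to a solution of the original $\cc{I}$, and the measure strictly decreases in cases (i) and (ii), so the recursion terminates.
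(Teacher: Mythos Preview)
Your proposal is correct and follows essentially the same approach as the paper: alternate the reductions $\geqstrong$ (Lemma~\ref{23minstrong}) and $\geqcong$ (Lemma~\ref{nonsimplepotato}) until every potato is a singleton, at which point the unique assignment is a solution. The paper's proof is terser---it simply asserts the existence of the chain $\cc{I}=\cc{I}_0\geqstrong\cc{I}_1\geqcong\cdots\geqstrong\cc{I}_n$ terminating in singletons---while you have spelled out the well-founded measure and the bookkeeping that the paper leaves implicit.
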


\begin{proof}
Since all of the potatoes in $\cc{I}$ are finite, we can use Lemmas~\ref{23minstrong}~and~\ref{nonsimplepotato} repeatedly to construct a sequence of \adj s
$$\cc{I}=\cc{I}_0\geqstrong\cc{I}_1\geqcong\cc{I}_2\geqstrong\cdots\geqcong\cc{I}_{n-1}\geqstrong\cc{I}_n$$
where the potatoes, and hence, relations in $\cc{I}_n$ each have one element. A \adj{} whose potatoes are all singletons has a solution. In fact, it has exactly one solution.
\end{proof}

\section{Bulatov Solutions}\label{bulatovsolutionssection}
Bulatov solutions, the subject of Definition~\ref{bulatovsolutions}, are the special solutions we seek that were alluded to at the end of Section~\ref{varsection}. They are solutions to an instance like $\cc{I}_n$ occurring at the end of a chain as in the proof of Theorem~\ref{bulatovsresult}.

\begin{lem}\label{inequalitiesarrows}
Suppose $\cc{I}$ is a \adj{} of $\csp(\algd)$ with $\algd\in\cc{S}$ and $\cc{J}$ is a subinstance of $\cc{I}$.
\begin{enumerate}
\item If $\cc{I}\geqstrong \cc{J}$ and $s$ is a solution to $\cc{I}$, there is a solution $r$ to $\cc{J}$ with $s\dig r$.
\item Suppose every potato and relation of $\cc{I}$ is strongly connected, $|P_u|>1$, and $\cc{I}_1,\dots,\cc{I}_k$ are the instances from Definition~\ref{decompdfn}. Further, suppose $\cc{J}=\cc{I}_i$ for some $i$. In other words, $\cc{I}\geqcong\cc{J}$. If $s$ is a solution to $\cc{I}$, there is a directed walk from $s$ to some solution, $r_i$ of $\cc{J}=\cc{I}_i$.
\end{enumerate} 
\end{lem}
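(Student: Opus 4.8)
Let me look at what needs to be proven. We have two parts about how solutions behave under the two reduction operations $\geqstrong$ and $\geqcong$.

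For part (1): $\cc{I} \geqstrong \cc{J}$ means $P_x^{\cc{J}} = (P_x^{\cc{I}})'$ (the smallest strongly connected component) and $R_{xy}^{\cc{J}} = (R_{xy}^{\cc{I}})'$. Given a solution $s$ to $\cc{I}$ — which is an element of $\prod_x \algp_x^{\cc{I}}$ satisfying all constraints — I want to produce a solution $r$ to $\cc{J}$ with $s \dig r$ (digraph arrow in the product). The natural candidate: since the algebra of solutions $\algs$ of $\cc{I}$ is in $\cc{S}$ (it's a subalgebra of $\prod \algp_x^{\cc{I}} \le \algd^X$, hence in $V(\algd) = \cc{S}$... wait, need it idempotent/in $\cc{S}$ — yes, $\cc{S}$ is a variety so closed under subalgebras and products), apply Lemma~\ref{digraphproperties}(2) to $\algs$: there is $r$ in the smallest strongly connected component $\algs'$ with $s \dig r$ in $\algs$. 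The arrow $s \dig r$ in $\algs$ restricts coordinatewise to $s(x) \dig r(x)$ in each $\algp_x^{\cc{I}}$. I then need $r$ to be a solution to $\cc{J}$, i.e., $r(x) \in P_x^{\cc{I}})'$ and $(r(x), r(y)) \in (R_{xy}^{\cc{I}})'$. This should follow from Lemma~\ref{digraphproperties}(3): $r \in \algs'$ means every vertex of $\algs$ (in particular $s$, and more relevantly any solution) has a directed walk to $r$; projecting to coordinate $x$ or to a pair $(x,y)$, we get that $r(x)$ receives walks from everything in $\pr_x(\algs) = P_x^{\cc{I}}$ (using that $\cc{I}$ has solutions extending every potato element — here (P3)-type reasoning and the fact that $\cc{I}$ is a standard $(2,3)$-instance with nonempty potatoes ensures $\pr_x(\algs) = P_x$), hence $r(x) \in (P_x^{\cc{I}})'$ by Lemma~\ref{digraphproperties}(3), and similarly $(r(x),r(y)) \in (R_{xy}^{\cc{I}})'$. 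The one subtlety: I need $\pr_x(\algs) = P_x^{\cc{I}}$ and $\pr_{x,y}(\algs) = R_{xy}^{\cc{I}}$, which requires knowing $\cc{I}$ has enough solutions; this is exactly Bulatov's theorem (Theorem~\ref{bulatovsresult}) applied to the subinstances obtained by pinning a variable — or it can be quoted as a standard consequence of $(2,3)$-consistency plus Theorem~\ref{bulatovsresult}. I expect this projection-surjectivity to be the main point requiring care.

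For part (2): Here $\cc{I}$ has all potatoes and relations strongly connected, $|P_u| > 1$, and $\cc{I}_1, \dots, \cc{I}_k$ is the decomposition from Definition~\ref{decompdfn}. Given a solution $s$ to $\cc{I}$, I want a directed walk in (the product associated to) $\cc{I}$ from $s$ to some solution $r_i$ of $\cc{I}_i$. By Lemma~\ref{itsadecomp}(2), $s$ is already a solution to $\cc{I}_i$ for some $i$ — namely the $i$ with $s(u) \in P_u^i$. So we may simply take $r_i = s$ and the trivial (length-zero) walk $s \dig s$, which is valid since $a \dig a$ always holds by Lemma~\ref{digraphproperties}(1). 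Thus part (2) is essentially immediate from Lemma~\ref{itsadecomp}(2); the walk statement is there only to make it uniform with part (1) for use in the next section (chaining the reductions). I should state it this cleanly rather than constructing a nontrivial walk.

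The plan, then: prove (1) by passing to the solution algebra $\algs \in \cc{S}$, invoking Lemma~\ref{digraphproperties}(2) to get $r \in \algs'$ with $s \dig r$, then using Lemma~\ref{digraphproperties}(3) together with surjectivity of the projections $\pr_x(\algs) = P_x^{\cc{I}}$, $\pr_{x,y}(\algs) = R_{xy}^{\cc{I}}$ (a consequence of Theorem~\ref{bulatovsresult}) to verify $r$ is a solution to $\cc{J}$; prove (2) by citing Lemma~\ref{itsadecomp}(2) and taking the trivial walk. The main obstacle is the bookkeeping in (1) to be sure that membership of $r$ in the smallest component of the solution algebra translates, coordinate by coordinate and pair by pair, into membership in the smallest components of the potatoes and relations — which is where the surjectivity of projections (hence Bulatov's theorem) is genuinely needed.
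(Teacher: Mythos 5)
Your proposal for part (2) rests on a misreading of the statement, and this is the serious gap. In the lemma, $\cc{J}=\cc{I}_i$ is a \emph{given} piece of the decomposition --- the hypothesis fixes $\cc{J}$ first and then says it equals $\cc{I}_i$ for some $i$ --- and the conclusion asks for a walk from $s$ to a solution of that particular piece. Your argument (take the $i$ with $s(u)\in P_u^{\cc{I}_i}$ and the trivial walk) only reaches the one piece that already contains $s$. That weaker statement is useless for Corollary~\ref{solutionspushdown}: in the chain of Definition~\ref{bulatovsolutions} the next instance is whichever piece the algorithm selects (Algorithm~\ref{bulatovsolutionalgorithm} always takes $P_u^1$), not the piece containing $s$. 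The paper's proof does exactly the work you skipped: after noting via Lemma~\ref{itsadecomp}~(2) that $s$ solves some $\cc{I}_1$, it uses strong connectivity of $\algp_u/\alpha_u$ (Lemma~\ref{digraphproperties}~(6)) to get a directed walk $P_u^{\cc{I}_1}\dig\cdots\dig P_u^{\cc{I}_i}$ in the quotient, and at each step forms $r_j=s\cdot r_j'$ with $r_j'$ a solution of $\cc{I}_j$ (existing by Theorem~\ref{bulatovsresult}) to push $s$ one arrow along; concatenating these reaches a solution of the prescribed $\cc{I}_i$.

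Part (1) also has a gap, which you correctly flagged but did not close. Your argument needs $\pr_x(\algs)=P_x$ and $\pr_{x,y}(\algs)=R_{xy}$ for the solution algebra $\algs$, i.e.\ that every potato element and every constraint tuple extends to a global solution. That is not a consequence of the instance being a \adj{} together with Theorem~\ref{bulatovsresult}: pinning a variable to a value destroys (P2), and re-running $(2,3)$-consistency may shrink or empty the instance, so you cannot conclude the pinned value lies in a solution. Without this surjectivity, membership of your $r$ in $\algs'$ does not translate into $r(x)\in(P_x)'$. The paper sidesteps the issue entirely: take any solution $r'$ of $\cc{J}$ (which exists by Lemma~\ref{23minstrong} plus Theorem~\ref{bulatovsresult}) and set $r=s\cdot r'$; then $s\dig r$ by Lemma~\ref{digraphproperties}~(1), $r$ is a solution of $\cc{I}$ because the solution set is a subuniverse, and $r$ lands in each $(P_x)'$ and $(R_{xy})'$ because these are absorbing with respect to $\cdot$ (Lemma~\ref{digraphproperties}~(4)). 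I recommend adopting that multiplication-by-a-known-solution device for both parts.
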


Since the algebra of solutions is in the variety generated by $\algd$, it is a $2$-semilattice, so it makes sense to say $s_1\dig s_2$ for solutions $s_1$ and $s_2$. We are working towards showing that if $\cc{I}$ is a \adj{} with a solution, $s$ and $\cc{I}_n$ is the \adj{} from the proof of Theorem~\ref{bulatovsresult}, there is a directed walk from $s$ to the solution of $\cc{I}_n$. This will be formulated more precisely in Corollary~\ref{solutionspushdown}.

\begin{proof}
We prove (1) first. Suppose $\cc{I}\geqstrong\cc{J}$. We know from Lemma~\ref{23minstrong} that $\cc{J}$ is a \adj{}, so by Theorem~\ref{bulatovsresult}, it has a solution, $r'$. The solution $r=s\cdot r'$ has the property that $s\dig r$ by Lemma~\ref{digraphproperties}~(1). It can be seen using Lemma~\ref{digraphproperties}~(4) that $r$ is a solution to $\cc{J}$.

The proof of (2) is not quite as simple. First, by Lemma~\ref{itsadecomp}~(2), we have that $s$ is a solution to some $\cc{I}_j$. Since we can order the instances any way we like, we'll assume $s$ is a solution to $\cc{I}_1$. Now suppose, for some $j$, that $P_u^{\cc{I}_1}\cdot P_u^{\cc{I}_j}=P_u^{\cc{I}_j}$ (as elements of $P_u/\alpha_u$ from Definition~\ref{decompdfn}). Let $r_j=s\cdot r_j'$ where $r_j'$ is a solution to $\cc{I}_j$ which exists by Theorem~\ref{bulatovsresult}. By Lemma~\ref{digraphproperties}~(1), $s\dig r_j$. We now show that $r_j$ is a solution to $\cc{I}_j$. By the assumption on $j$, $P_u^{\cc{I}_1}\dig P_u^{\cc{I}_j}$ in $\algp_u/\alpha_u$, so by Lemma~\ref{itsadecomp}~(3), $P_x^{\cc{I}_1}\dig P_x^{\cc{I}_j}$ as well for any $x\in W$. It follows that for $x\in W$ that $r_j(x)\in P_x^{\cc{I}_j}$. Since $P_x^{\cc{I}_j}=P_x$ for $x\notin W$, we get that $r_j(x)\in P_x^{\cc{I}_j}$ for all $x\in X$. This shows that $r_j$ is, a solution to $\cc{I}_j$. To finish off the proof, note that $\algp_u/\alpha_u$ is strongly connected by Lemma~\ref{digraphproperties}~(6). Therefore, there is a directed walk, $P_u^{\cc{I}_1}\dig P_u^{\cc{I}_{k_1}}\dig\dots\dig P_u^{\cc{I}_{k_m}}=P_u^{\cc{I}_i}$. Each $\cc{I}_{k_{\ell}}$ has a solution, so the previous argument can be repeated to obtain a directed walk from $s$ to a solution through $\cc{I}_i=\cc{J}$.
\end{proof}

We now define the set of Bulatov solutions to a \adj{} of $\csp(\algd)$ where $\algd\in\cc{S}$. They are of importance in the proof of the main result of this paper.

\begin{dfn}\label{bulatovsolutions}
Let $\cc{I}$ be a \adj{} of $\csp(\algd)$ with $\algd\in\cc{S}$. A Bulatov solution to $\cc{I}$ is a solution, $r$ that arises as a solution to some \adj{} $\cc{I}_n$ whose potatoes are all singletons occurring at the end of a sequence of instances, $\cc{I}_0,\dots,\cc{I}_n$ with
$$\cc{I}=\cc{I}_0\geqstrong\cc{I}_1\geqcong\cc{I}_2\geqstrong\cdots\geqcong\cc{I}_{n-1}\geqstrong\cc{I}_n.$$
\end{dfn}

\begin{cor}\label{solutionspushdown}
Let $\cc{I}$ be a \adj{} of $\csp(\algd)$ with $\algd\in\cc{S}$. Suppose $s$ is a solution to $\cc{I}$, and $r$ is a Bulatov solution to $\cc{I}$. Then there is a directed walk from $s$ to $r$ in the algebra of solutions to $\cc{I}$.
\end{cor}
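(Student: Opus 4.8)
The plan is to induct on the length $n$ of the defining chain $\cc{I}=\cc{I}_0\geqstrong\cc{I}_1\geqcong\cc{I}_2\geqstrong\cdots\geqcong\cc{I}_{n-1}\geqstrong\cc{I}_n$ witnessing that $r$ is a Bulatov solution. The base case $n=0$ is vacuous: if $\cc{I}$ already has all potatoes singletons then $s=r$ and the empty walk works. For the inductive step, the idea is to peel off the first reduction in the chain, carry $s$ along that reduction using Lemma~\ref{inequalitiesarrows}, and then splice.

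\textbf{The two cases.} The first step of the chain is either $\cc{I}=\cc{I}_0\geqstrong\cc{I}_1$ or $\cc{I}=\cc{I}_0\geqcong\cc{I}_1$. In the first case, since $s$ is a solution to $\cc{I}_0$ and $\cc{I}_0\geqstrong\cc{I}_1$, part~(1) of Lemma~\ref{inequalitiesarrows} gives a solution $s_1$ to $\cc{I}_1$ with $s\dig s_1$ (a single edge, hence a directed walk of length $1$). In the second case, part~(2) of Lemma~\ref{inequalitiesarrows} gives a directed walk from $s$ to some solution $s_1$ of $\cc{I}_1$. Either way we obtain a solution $s_1$ to $\cc{I}_1$ together with a directed walk from $s$ to $s_1$. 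Now $r$ is a solution to $\cc{I}_n$ occurring at the end of the chain $\cc{I}_1\geqcong\cc{I}_2\geqstrong\cdots\geqstrong\cc{I}_n$ (or $\cc{I}_1\geqstrong\cc{I}_2\geqcong\cdots$ in the other parity), which is a valid Bulatov-solution chain for $\cc{I}_1$ of length $n-1$. By the induction hypothesis applied to $\cc{I}_1$, there is a directed walk from $s_1$ to $r$ in the algebra of solutions to $\cc{I}_1$. Concatenating the two walks gives a directed walk from $s$ to $r$.

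\textbf{The splicing issue.} The main obstacle is that the second walk lives in the algebra of solutions to $\cc{I}_1$, whereas the corollary asks for a walk in the algebra of solutions to $\cc{I}$, and these are different algebras (the former a subalgebra of the latter, since $\cc{I}_1$ is a subinstance of $\cc{I}$ with the same variable set). This is harmless: every solution to $\cc{I}_1$ is a solution to $\cc{I}$, the $2$-semilattice operation on the solutions to $\cc{I}_1$ is the restriction of the one on solutions to $\cc{I}$, so the digraph relation $\dig$ on solutions to $\cc{I}_1$ is the restriction of $\dig$ on solutions to $\cc{I}$. Hence a directed walk inside the solutions to $\cc{I}_1$ is, verbatim, a directed walk inside the solutions to $\cc{I}$. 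Likewise the walk from $s$ to $s_1$ produced by Lemma~\ref{inequalitiesarrows} consists of solutions to $\cc{I}$ and uses $\dig$ in the algebra of solutions to $\cc{I}$. Thus the concatenation makes sense in the algebra of solutions to $\cc{I}$, completing the induction. One should just remark at the outset (as the paragraph preceding the corollary does) that the algebra of solutions is a $2$-semilattice, so that $\dig$ and "directed walk" are meaningful for all the instances involved.
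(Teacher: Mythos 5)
Your proof is correct and is essentially the paper's own argument: the paper simply says to apply Lemma~\ref{inequalitiesarrows} repeatedly along the defining chain and concatenate the resulting walks, which is exactly your induction, with the splicing observation made explicit.
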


\begin{proof}
By repeatedly applying Lemma~\ref{inequalitiesarrows} and concatenating the walks each application provides, we get a directed walk from $s$ to the unique solution of $\cc{I}_n$.
\end{proof}

Bulatov's work shows that a \adj{} of $\csp(\algd)$ with nonempty potatoes has a solution when $\algd\in\cc{S}$, but we have now seen that Bulatov's proof can be followed carefully to find a Bulatov solution to any such \adj{}. This is made precise in Algorithm~\ref{bulatovsolutionalgorithm}.

\begin{algorithm}
\caption{Find a Bulatov solution to a nonempty \adj{} of $\csp(\algd)$ when $\algd\in\cc{S}$.}\label{bulatovsolutionalgorithm}
\begin{algorithmic}[1]
\State {\bf Input}: A nonempty \adj{} of $\csp(\algd)$ with $\algd\subseteq\cc{S}$.
\For {$x\in X$}
	\If {$(P_x,\overset{\algp_x}{\dig})$ is not strongly connected}
	\State $P_x \gets P_x'$
	\EndIf
\EndFor
\For {$(x,y)\in X^2$}
	\If {$(R_{xy},\overset{\algr_{xy}}{\dig})$ is not strongly connected}
		\State $R_{xy} \gets R_{xy}'$
	\EndIf
\EndFor
\If {$|P_u|>1$ for some $u\in X$}
	\State Choose $\alpha_u$ maximal in $\conn(\algp_u)$ and find $W$ (Definition~\ref{decompdfn})
	\For {$x\in X$}
		\If {$x\in W$}
			\State $P_x \gets \varphi_{xu}^{-1}(P_u^1)$
		\EndIf
	\EndFor
	\For {$(x,y)\in X^2$}
		\State $R_{xy} \gets R_{xy}\cap P_x\times P_y$
	\EndFor
	\State Go to 2
\Else
	\State $r(x)=a$ where $a$ is the unique element in $P_x$
\EndIf
\State {\bf Output}: $r$
\end{algorithmic}
\end{algorithm}

\section{A proof of Theorem~\ref{mainthm}}\label{mainresult}
For the duration of this section, we fix a variety, $\cc{W}$, and a finite algebra $\algd$, satisfying the properties in the statement of the Theorem. The variety generated by $\algd/\theta$ will be called $\cc{T}$.

By definition, $\algd\in\cc{W}\circ\cc{T}$. Since Maltsev products are closed under taking subalgebras and products, we get that every subalgebra of $\algd$ and $\algd\times\algd$ is in $\cc{W}\circ\cc{T}$. By Lemma~\ref{specialcongruencelem}, each such algebra, $\alga$, has a unique congruence, $\theta_{\alga}$ which witnesses $\alga\in\cc{W}\circ\cc{T}$.

\begin{dfn}\label{quotientinstance}
Let $\cc{I}=(X,\cc{P},\cc{R})$ be a \adj{} of $\csp(\algd)$. The quotient instance $\cc{I}/\theta$ is the instance $(X,\cc{P}/\theta,\cc{R}/\theta)$ where $\cc{P}/\theta=\{\algp_x/\theta_{\algp_x}:x\in X\}$ and $\cc{R}/\theta=\{\algr_{xy}^{\theta}:x,y\in X\}$ where $\algr_{xy}^{\theta}=\{(a/\theta_{P_x},b/\theta_{P_y}):(a,b)\in R_{xy}\}$.
\end{dfn}

We now acknowledge that the instance $\cc{I}/\theta$ constructed in Definition~\ref{quotientinstance} is not technically an instance of $\csp(\alge)$ for any algebra, $\alga$. This is because there is no reason to expect that there is any algebra, $\alge$ such that $\algp_x/\theta_x\leq\alge$ for every $x\in X$. We work around this technicality by thinking of the algebras as subuniverses of $\alge=\prod\{A:\varnothing\neq A\leq D\}$. Lemma~\ref{quotient23minimal} formalizes the desired properties of this algebra. Since $\algd$ is fixed, we take all of its subuniverses as part of an input to $\csp(\algd)$, which means this will not affect whether or not Algorithm~\ref{mainalgorithm} runs in polynomial time.

\begin{lem}\label{quotient23minimal}
If $\cc{I}$ is a \adj{} of $\csp(\algd)$. There is an algebra, $\alge\in\cc{T}$ depending only on $\algd$ with subalgebras, $(\algq_x:x\in X)$, and subalgebras of its square, $(S_{xy}:(x,y)\in X^2)$, such that
\begin{enumerate}
\item $\algp_x/\theta_{\algp_x}\cong \algq_x$ via the isomorphism $h_x$.
\item The map $h_{xy}:R_{xy}\to S_{xy}$ given by $(a,b)\mapsto (h_x(a),h_y(b))$ is an isomorphism for each $x,y\in X$.
\item $(X,(\algq_x:x\in X),(S_{xy}:(x,y)\in X^2))$ is a \adj{} of $\csp(\alge)$.
\end{enumerate}
\end{lem}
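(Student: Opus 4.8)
The plan is to build $\alge$ once and for all from $\algd$, and then show that each of the three properties is essentially bookkeeping plus invocations of results from Section~\ref{varsection}. Set $\alge_0 = \prod\{\alga : \varnothing\neq A\leq D\}$ and let $\alge$ be its quotient by the product of the congruences $\theta_{\alga}$ on each factor; concretely, $\alge = \prod\{\alga/\theta_{\alga} : \varnothing\neq A\leq D\}$. Each factor $\alga/\theta_{\alga}$ lies in $\cc{T}$ by definition of $\theta_{\alga}$ (Lemma~\ref{specialcongruencelem}), and $\cc{T}$ is a variety, so $\alge\in\cc{T}$. Note $\alge$ depends only on $\algd$, as required, since it only depends on the (finitely many) subuniverses of $D$.

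\textbf{Constructing the subalgebras and proving (1) and (2).} For each $x\in X$, $\algp_x$ is a subuniverse of some power of $\algd$, hence is one of the factors of $\alge_0$; let $\pi_x\colon \alge\to \algp_x/\theta_{\algp_x}$ be the corresponding coordinate projection and let $\algq_x$ be the diagonal-type copy of $\algp_x/\theta_{\algp_x}$ sitting inside $\alge$ — that is, $\algq_x$ is the image of $\algp_x/\theta_{\algp_x}$ under the natural embedding into the $x$-coordinate (with the remaining coordinates filled in by, say, choosing a canonical element). More cleanly: since $\algp_x/\theta_{\algp_x}$ embeds as a subalgebra of a factor of $\alge$, it embeds as a subalgebra of $\alge$; call the image $\algq_x$ and the isomorphism $h_x$. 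This gives (1) immediately. For (2), the map $h_{xy}\colon R_{xy}\to S_{xy}$, $(a,b)\mapsto(h_x(a),h_y(b))$ is a bijection onto its image $S_{xy}$ by construction; it is a homomorphism because $h_x$ and $h_y$ are; and $S_{xy}$ is a subuniverse of $\alge\times\alge$ because it is the image of the subuniverse $R_{xy}\leq\algd^{(\cdot)}\times\algd^{(\cdot)}$ under a homomorphism. The only genuine content here is that the pair map is well defined, which is exactly the statement that $a\overset{\theta_{\algp_x}}{\equiv}a'$ and $b\overset{\theta_{\algp_y}}{\equiv}b'$ with $(a,b),(a',b')\in R_{xy}$ forces $(h_x(a),h_y(b))=(h_x(a'),h_y(b'))$ — trivially true since $h_x,h_y$ are defined on the quotients. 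So (2) reduces to checking that distinct $\theta$-classes stay distinct, which is immediate.

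\textbf{Proving (3): the quotient is a standard $(2,3)$-instance.} As remarked after Definition~\ref{cspdef} in the form of the comment following Definition~\ref{ourcsp}, it suffices to verify (P1) and (P2) for the instance $(X,(\algq_x),(S_{xy}))$. Property (P1) is inherited: $R_{xx}=0_{P_x}$, so $S_{xx}=h_{xx}(0_{P_x})=\{(h_x(a),h_x(a)):a\in P_x\}=0_{\algq_x}$, using that $h_x$ is a bijection. For (P2), take $(\bar a,\bar b)\in S_{xy}$ and any $z\in X$; lift via $h_{xy}^{-1}$ to $(a,b)\in R_{xy}$, apply (P2) for $\cc{I}$ to get $c\in P_z$ with $(a,c)\in R_{xz}$ and $(b,c)\in R_{yz}$, and push forward: $\bar c := h_z(c)\in Q_z$ satisfies $(\bar a,\bar c)=h_{xz}(a,c)\in S_{xz}$ and $(\bar b,\bar c)=h_{yz}(b,c)\in S_{yz}$. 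That is all that is needed. (One should also remark, for tidiness, that $S_{xy}\subseteq Q_x\times Q_y$, which is immediate from the definition of $h_{xy}$.)

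\textbf{Main obstacle.} The real subtlety — and where I would spend the most care — is not in any single verification above but in making sure the objects are legitimate: that $\algp_x/\theta_{\algp_x}$ really is (isomorphic to) a subalgebra of a \emph{single} fixed algebra $\alge\in\cc{T}$, independent of the instance, rather than a menagerie of unrelated quotients. This is exactly the technicality flagged in the paragraph before the lemma, and the fix is the product-over-all-subuniverses trick. Once one grants that $\alge\in\cc{T}$ (which needs $\cc{T}$ closed under products — true, it is a variety — and each factor in $\cc{T}$, true by Lemma~\ref{specialcongruencelem}), everything else is the routine transport of (P1), (P2), subuniverse-closure, and well-definedness along the isomorphisms $h_x$ and $h_{xy}$. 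I do not expect any step to require a nontrivial algebraic argument beyond citing Lemma~\ref{specialcongruencelem} and the closure properties of varieties and Maltsev products.
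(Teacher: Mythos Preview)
The paper does not actually give a proof of this lemma; it is stated and immediately used. The only guidance is the sentence preceding it, which suggests taking $\alge=\prod\{A:\varnothing\neq A\leq D\}$ (implicitly followed by quotienting by the product of the $\theta_{\alga}$'s so that the result lands in $\cc{T}$). Your construction $\alge=\prod\{\alga/\theta_{\alga}:\varnothing\neq A\leq D\}$ is exactly this, and the verification of (1)--(3) you outline is the intended routine transport along the isomorphisms $h_x$. So your approach is correct and matches the paper's intent.

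Two small points worth tightening. First, each $\algp_x$ is a subalgebra of $\algd$ itself (Definition~\ref{ourcsp}), not of a power, so it literally appears as a factor of $\alge$; your ``fill the other coordinates with a fixed element'' embedding works precisely because the varieties are idempotent, which you should say explicitly. Second, the map $h_{xy}$ in part~(2) cannot be an isomorphism with domain $R_{xy}$ as written, since distinct pairs in the same $\theta$-class collapse; the intended domain is $R_{xy}^{\theta}$ (equivalently $\algr_{xy}/\theta_{\algr_{xy}}$, which coincides with the restriction of $\theta_{\algp_x}\times\theta_{\algp_y}$ by the explicit description of $\theta$ in Definition~\ref{specialcongruence}). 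You implicitly read it this way, which is correct --- just flag that the statement as printed has this imprecision.
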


Because of this Lemma, we can think of $\cc{I}/\theta$ as a \adj{} of $\csp(\alge)$ for some algebra, $\alge\in\cc{T}$.

\begin{dfn}
Suppose $\cc{I}$ is a \adj{} of $\csp(\algd)$. Let $s$ be a solution to $\cc{I}$ and $\varphi$ be a solution to $\cc{I}/\theta$. We say that $s$ passes through $\varphi$ if for each $x$, $s(x)/\theta_{P_x}=\varphi(x)$.
\end{dfn}

If $s$ is a solution to $\cc{I}$, then the map given by $\varphi(x)=s(x)/\theta_{P_x}$ is the unique solution to $\cc{I}/\theta$ through which $s$ passes. Since $\cdot$ is a $2$-semilattice operation on each $\algp_x/\theta_x$, we can define a digraph relation by $a/\theta\dig b/\theta$ when $a\cdot b\overset{\theta}{\equiv} b$ as we did for $2$-semilattices. This also applies to the set of solutions to $\cc{I}/\theta$.

\begin{lem}\label{solutionsmoveacrossarrows}
Let $\cc{I}$ be a \adj{} of $\csp(\algd)$. Suppose $\varphi$ and $\psi$ are solutions to $\cc{I}/\theta$ with $\varphi\dig\psi$. If $\cc{I}$ has a solution through $\varphi$, then it has a solution through $\psi$.
\end{lem}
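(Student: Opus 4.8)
The plan is to leverage the function $f_{a/\theta,b/\theta}$ from Lemma~\ref{functionslem} to ``push'' a solution of $\cc{I}$ through $\varphi$ forward along the edge $\varphi\dig\psi$ into a solution through $\psi$. Let $s$ be a solution to $\cc{I}$ with $s(x)/\theta_{P_x}=\varphi(x)$ for every $x$. Since $\varphi(x)/\theta \dig \psi(x)/\theta$ in each quotient potato $\algp_x/\theta_x$, Lemma~\ref{functionslem} (applied to the algebra $\algp_x$, which lies in $\cc{W}\circ\cc{T}$ and satisfies $x\cdot(y\cdot z)\approx x\cdot(z\cdot y)$ by hypothesis~(2) of Theorem~\ref{mainthm}) gives a well-defined function $f_x:=f_{\varphi(x),\psi(x)}:\varphi(x)\to\psi(x)$ sending $u\mapsto u\cdot d$ for any representative $d$ of the class $\psi(x)$. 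Define the candidate solution $r$ to $\cc{I}$ by $r(x)=f_x(s(x))=s(x)\cdot d_x$ where $d_x$ is any fixed element of $P_x$ with $d_x/\theta_x=\psi(x)$. By construction $r(x)/\theta_x=\psi(x)$, so $r$ passes through $\psi$ provided it is genuinely a solution.

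The core verification is that $r$ satisfies every constraint of $\cc{I}$, i.e.\ that $(r(x),r(y))\in R_{xy}$ for all $x,y\in X$. Here is where I would be careful about the choice of the representatives $d_x$: I want to choose them compatibly across constraints. Fix a solution $t$ of $\cc{I}$ that passes through $\psi$ — this exists because $\psi$ is itself a solution to $\cc{I}/\theta$, hence by Theorem~\ref{bulatovsresult} applied to the quotient instance (which is a \adj{} of $\csp(\alge)$ with $\alge\in\cc{T}$ by Lemma~\ref{quotient23minimal}, and nonempty since $\psi$ exists) every potato of $\cc{I}/\theta$ is nonempty, and then we need the corresponding fact that $\cc{I}$ has a solution through any solution of $\cc{I}/\theta$; but that is not yet established, so instead I take $t$ to be a Bulatov-type solution of the subinstance of $\cc{I}$ lying over $\psi$. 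Rather than belabor this, the cleaner route: set $d_x=t(x)$ where $t$ is some solution of the \adj{} obtained by restricting each potato $P_x$ to the $\theta$-class $\psi(x)$ and each $R_{xy}$ accordingly; one checks this restricted instance is again a \adj{} (conditions (P1),(P2) are inherited, using that $\varphi\dig\psi$ guarantees the relevant classes are non-trivial) and is nonempty, so Theorem~\ref{bulatovsresult} — or more precisely its adaptation to $\alge\in\cc{T}$ — supplies $t$. Then for any constraint $(x,y)$, both $(s(x),s(y))\in R_{xy}$ and $(t(x),t(y))\in R_{xy}$, so since $R_{xy}$ is a subuniverse of $\algp_x\times\algp_y$ we get $(s(x)\cdot t(x),\,s(y)\cdot t(y))=(r(x),r(y))\in R_{xy}$, as required. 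The unary potato constraints are handled the same way.

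The main obstacle I anticipate is precisely the existence of the ``reference'' solution $t$ passing through $\psi$: one cannot simply invoke the very statement being proved, so one must instead argue directly that the instance $\cc{I}$ restricted over the single quotient-solution $\psi$ is a nonempty \adj{} of $\csp$ over an algebra in $\cc{T}\subseteq\cc{S}$-like territory — wait, no: the restricted instance has potatoes that are $\theta$-classes, which lie in $\cc{W}$, not in $\cc{T}$. So the correct tool is not Theorem~\ref{bulatovsresult} at all but rather the tractability of $\cc{W}$ together with the consistency machinery — or, more likely in the author's intended development, an induction on the structure set up earlier (Bulatov solutions, Corollary~\ref{solutionspushdown}). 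I would structure the argument so that the only external inputs are: (i) Lemma~\ref{functionslem} to define the maps $f_x$; (ii) closure of each $R_{xy}$ under $\cdot$; and (iii) existence of \emph{some} solution over $\psi$, which I would derive by noting that the $\theta$-quotient solution $\psi$ together with the original solution $s$ through $\varphi$ lets us form $r$ as above once a single compatible family $(d_x)$ over $\psi$ is known to exist — and this last point is exactly what must be extracted, perhaps by applying the function $f_x$ not to $s$ but iterating along a directed walk from $\varphi$ to the minimal strongly connected component and back, invoking Lemma~\ref{digraphproperties}. In the writeup I would isolate the claim ``the restriction of $\cc{I}$ to classes prescribed by a solution of $\cc{I}/\theta$ is a \adj{}'' as a preliminary lemma, prove it from (P1),(P2) plus the edge condition $\varphi\dig\psi$, and only then run the two-solution averaging argument $(s,t)\mapsto s\cdot t$ above.
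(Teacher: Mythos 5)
Your first paragraph sets up exactly the right map $r(x)=f_{\varphi(x),\psi(x)}(s(x))$, but you then go astray on the verification step, and the detour does not close. The worry about ``choosing the representatives $d_x$ compatibly across constraints'' is a red herring: the whole point of the ``Moreover'' clause of Lemma~\ref{functionslem} is that $f_{\varphi(x),\psi(x)}(u)=u\cdot d$ is \emph{independent} of the representative $d\in\psi(x)$, so $r(x)$ is unambiguously defined and, for each constraint separately, you may compute it with whatever representative is locally convenient. Concretely: fix $x,y$. Since $\psi$ is a solution to $\cc{I}/\theta$, we have $(\psi(x),\psi(y))\in R_{xy}^{\theta}$, and by the definition of the quotient relation this means there is some pair $(a,b)\in R_{xy}$ with $a/\theta_{P_x}=\psi(x)$ and $b/\theta_{P_y}=\psi(y)$. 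By well-definedness, $r(x)=s(x)\cdot a$ and $r(y)=s(y)\cdot b$, and since $R_{xy}$ is a subuniverse containing $(s(x),s(y))$ and $(a,b)$, it contains $(r(x),r(y))$. No globally coherent family $(d_x)_{x\in X}$ is needed; only a pair per constraint, and its existence is immediate.

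Your attempted fix --- producing a ``reference'' solution $t$ of $\cc{I}$ passing through $\psi$ --- is circular, as you half-acknowledge: such a $t$ is precisely a solution through $\psi$, i.e.\ the conclusion of the lemma. The patches you float do not work either: Theorem~\ref{bulatovsresult} cannot be applied to the instance restricted over $\psi$, because its potatoes are $\theta$-classes, which lie in $\cc{W}$ where $\cdot$ is the first projection rather than a $2$-semilattice operation (and indeed that restricted instance may have no solution at all in general --- deciding whether it does is what line~8 of Algorithm~\ref{mainalgorithm} is for). The final paragraph of your proposal never resolves this, so as written the proof has a genuine gap, even though the one missing observation (use well-definedness to localize the choice of representatives) is already contained in the lemma you cite.
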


\begin{proof}
Since $\varphi\dig\psi$, for each $x$, we have $\varphi(x)\dig\psi(x)$ in $P_x/\theta$. By Lemma~\ref{functionslem}, there is a well defined function, $f_{\varphi(x),\psi(x)}:\varphi(x)\to\psi(x)$. Let $s$ be a solution passing through $\varphi$. The map $t:X\to \bigcup\{P_x:x\in X\}$ given by $t(x) = f_{\varphi(x),\psi(x)}(s(x))$ is a solution to $\cc{I}$ passing through $\psi$. That it passes through $\psi$ is simply because $f_{\varphi(x),\psi(x)}:\varphi(x)\to\psi(x)$ and $s(x)\in\varphi(x)$. To see that it is a solution, we must show, for any $x,y\in X$, that $(t(x),t(y))\in R_{xy}$. Since $\psi$ is a solution to $\cc{I}/\theta$, we have $(\psi(x),\psi(y))\in R_{xy}^{\theta}$, which means there is some $(a,b)\in R_{xy}$ such that $a/\theta_{P_x}=\psi(x)$ and $b/\theta_{P_y}=\psi(y)$. By Lemma~\ref{functionslem}, we have that $f_{\varphi(x),\psi(x)}(s(x))=s(x)\cdot a$, and $f_{\varphi(y),\psi(y)}(s(y))=s(y)\cdot b$. Therefore, $(t(x),t(y))=(s(x)\cdot a,s(y)\cdot b)\in R_{xy}$ because $((s(x),s(y))$ and $(a,b)$ are in $R_{xy}$.
\end{proof}

We now state the algorithm for $\csp(\algd)$.

\begin{algorithm}
\caption{Given a nonempty \adj{} of $\csp(\algd)$, determine whether or not it has a solution.}\label{mainalgorithm}
\begin{algorithmic}[1]
\State {\bf Input}: A nonempty \adj{} of $\csp(\algd)$.
\State Construct $\cc{I}/\theta$ from Definition~\ref{quotientinstance}.
\State Using Algorithm~\ref{bulatovsolutionalgorithm}, find a Bulatov solution, $\varphi$ to $\cc{I}/\theta$.
\For {$x\in X$}
\State $P_x \gets \varphi(x)$
\EndFor
\For {$(x,y)\in X^2$}
\State $R_{xy}\gets R_{xy}\cap (P_x\times P_y)$
\EndFor
\State Run the algorithm for $\cc{W}$ on $\cc{I}$.
\If {$\cc{I}$ has a solution}
\State {\bf Output}: YES
\Else
\State {\bf Output}: NO
\EndIf
\end{algorithmic}
\end{algorithm}

Note that in line 3, we have said to use Algorithm~\ref{bulatovsolutionalgorithm} on $\cc{I}/\theta$ when it is technically inappropriate. By Lemma~\ref{quotient23minimal}, we can consider $\cc{I}/\theta$ as a \adj{} of $\csp(\alge)$ for some $\alge\in\cc{T}$, but Algorithm~\ref{bulatovsolutionalgorithm} only works if $\alge\in\cc{S}$. However, since $\cc{T}$ has a $2$-semilattice operation, we can simply ignore all other operations without changing the solutions.

\begin{proof}[Proof of Theorem~\ref{mainthm}]
It suffices to show that Algorithm~\ref{mainalgorithm} correctly decides whether or not a \adj{} of $\csp(\algd)$ has a solution. Since line 8 checks a subinstance of the input instance for solutions, if the output of the algorithm is YES, there is a solution. We must show that if there is a solution, the algorithm will output YES. This amounts to showing that if $\cc{I}$ has a solution, then $\cc{I}$ has a solution through every Bulatov solution to $\cc{I}/\theta$. Suppose $\cc{I}$ has a solution, $s$, and let $\varphi$ be the solution to $\cc{I}/\theta$ through which $s$ passes. Fix a Bulatov solution to $\cc{I}/\theta$. By Corollary~\ref{solutionspushdown}, there is a directed walk in the algebra of solutions to $\cc{I}/\theta$ from $\varphi$ to $\psi$. Repeated application so Lemma~\ref{solutionsmoveacrossarrows} shows that $\cc{I}$ has a solution which passes through $\psi$.
\end{proof}

\section{A proof of Corollary~\ref{maincor}}\label{applicationsection}
We now apply Theorem~\ref{mainthm} to prove Corollary~\ref{maincor}. Since it is about edge term varieties, we start by giving the definition of an edge term.

\begin{dfn}
A $k$-edge operation is a $(k+1)$-ary operation satisfying the following identities:
\begin{eqnarray*}
e(y,y,x,x,x,x,\dots ,x,x) & \approx & x \\
e(y,x,y,x,x,x,\dots ,x,x) & \approx & x \\
e(x,x,x,y,x,x,\dots ,x,x) & \approx & x \\
e(x,x,x,x,y,x,\dots ,x,x) & \approx & x \\
\vdots \hspace{1.75cm} & \vdots & \hspace{.4mm}\vdots \\
e(x,x,x,x,x,x,\dots ,y,x) & \approx & x \\
e(x,x,x,x,x,x,\dots ,x,y) & \approx & x.
\end{eqnarray*}
An idempotent variety, $\cc{W}$, is called an edge term variety if its type has a $(k+1)$-ary term which is a $k$-edge term operation for every algebra in $\cc{W}$. 
\end{dfn}

\begin{lem}\label{edgetermlem}
Suppose $\cc{T}$ is a variety which is term equivalent to $\cc{S}$, and $\cc{W}$ is an edge term variety of the same type as $\cc{T}$. There is a binary term, $\cdot$, in the type of the two varieties which is a $2$-semilattice operation for $\cc{T}$ and the first projection in $\cc{W}$.
\end{lem}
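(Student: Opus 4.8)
The statement asserts that when $\cc{W}$ is an edge term variety and $\cc{T}$ is term equivalent to $\cc{S}$, of the same type, there is a single binary term $\cdot$ which acts as a $2$-semilattice operation in $\cc{T}$ and as first projection in $\cc{W}$. The natural candidate for $\cdot$ cannot simply be the operation $*$ coming from the term equivalence of $\cc{T}$ with $\cc{S}$, since $*$ need not behave like a projection in $\cc{W}$; instead I would build $\cdot$ from a well-chosen term in the common type and then verify the two required sets of identities separately. The key observation is that by Corollary~\ref{termequivcor}, \emph{any} binary term in the type that depends on both its variables is forced to equal $*$ in $\cc{T}$. So the whole problem reduces to: find a binary term $\cdot$ in the common type which (a) depends on both variables, so that (a) automatically makes it a $2$-semilattice operation in $\cc{T}$ via Corollary~\ref{termequivcor}, and (b) is the first projection in $\cc{W}$.

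First I would record that $\cc{W}$, being an edge term variety, has a $k$-edge term $e(x_1,\dots,x_{k+1})$ in its type; in particular $\cc{W}$ has a Maltsev term or a near-unanimity-like structure via edge operations, but the feature I want is the ability to produce, from $e$, binary idempotent terms that collapse to a projection. Concretely, one builds from $e$ a binary term $p(x,y)$ which is idempotent and equals $x$ in $\cc{W}$ — for instance by substituting $x$'s and $y$'s into the arguments of $e$ so that every one of the edge identities forces the output to $x$. The standard trick is: set $p(x,y) = e(y,y,x,x,\dots,x)$ or a short composition thereof; by the first edge identity this is $\approx x$ in $\cc{W}$, while in $\cc{T}$ it is some binary term. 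The difficulty is that such a $p$ might \emph{not} depend on $y$ at all in $\cc{T}$ either (it could be the first projection everywhere). To fix that, I would instead consider $\cdot := p(x, s(x,y))$ or $\cdot := s(p(x,y), q(x,y))$ where $s$ is the $2$-semilattice term of $\cc{T}$ transported into the common type, chosen so that in $\cc{T}$ the composite genuinely depends on $y$, while in $\cc{W}$ every piece collapses to $x$. Since in $\cc{W}$ everything is a projection onto $x$, any composite of such terms stays equal to $x$; and in $\cc{T}$, as long as the composite syntactically can depend on $y$ and is not provably equal to the first projection, Corollary~\ref{termequivcor} forces it to be $*$, hence a $2$-semilattice operation.

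So the steps, in order, would be: (1) fix the edge term $e$ of $\cc{W}$ and the $2$-semilattice term of $\cc{T}$, written in the shared type; (2) exhibit an explicit binary term $\cdot$ built from these, designed so that in $\cc{W}$ it reduces to the first projection using only the edge identities and idempotence; (3) check $\cc{W} \vDash x\cdot y \approx x$, which is the routine substitution verification; (4) argue $\cc{T} \vDash x\cdot x \approx x$ (idempotence, automatic since the term is idempotent) and that $\cc{T} \vDash x \cdot y \approx y \cdot x$ fails to be avoidable — i.e. show $\cdot$ depends on both variables in $\cc{T}$; (5) invoke Corollary~\ref{termequivcor} to conclude $\cc{T}\vDash x\cdot y \approx x * y$, so $\cdot$ is a $2$-semilattice operation in $\cc{T}$. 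The main obstacle is step (4): ensuring that the term which collapses to a projection in $\cc{W}$ does \emph{not} also collapse to a projection in $\cc{T}$. The resolution should come from the fact that $\cc{W}$ and $\cc{T}$ are \emph{different} varieties in the same type: a term that is first projection in both would make $\cdot$ trivial, but one can always arrange the composite to involve $*$ in a way that survives in $\cc{T}$ — e.g. because $*$ is not the first projection in $\cc{T}$, any term of the form $(\text{something}) * (\text{something involving }y)$ genuinely depends on $y$ modulo $\cc{T}$. Once step (4) is secured, the rest is mechanical substitution and a single application of Corollary~\ref{termequivcor}.
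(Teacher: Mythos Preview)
Your overall strategy is exactly the paper's: reduce, via Corollary~\ref{termequivcor}, to exhibiting a binary term that is first projection in $\cc{W}$ and depends on both variables in $\cc{T}$. You also correctly identify step~(4) as the crux. But you do not actually carry it out, and the concrete candidates you offer fail in general. Take your main suggestion $p(x,y)=e(y,y,x,\dots,x)$ and $\cdot := p(x, x*y)=e(x*y,x*y,x,\dots,x)$. In $\cc{W}$ this is indeed $x$ by the first edge identity. But in $\cc{T}$, whether it depends on $y$ hinges entirely on whether $e$ depends on one of its first two arguments in $\cc{T}$. If, say, $e$ depends only on $x_5$ in $\cc{T}$ (which nothing rules out), then $e(x*y,x*y,x,\dots,x)\approx x$ in $\cc{T}$ as well, and your term is first projection everywhere. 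Your fallback, wrapping things in $*$ on the outside, is circular: to get $(\text{something})*(\text{something involving }y)$ to be $x$ in $\cc{W}$ you need both pieces to collapse to $x$ there (so that idempotence gives $x*x\approx x$), and ``something that is $x$ in $\cc{W}$ yet genuinely involves $y$ in $\cc{T}$'' is precisely the object you are trying to build.

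The missing idea is that the position into which you substitute $x*y$ must be chosen according to where $e$ actually depends in $\cc{T}$. The paper makes this explicit: set $I=\{k: e \text{ depends on }x_k \text{ in }\cc{T}\}$, which is nonempty since $\cc{T}$ is idempotent and nontrivial, and then do a short case analysis. If $I\subseteq\{1,2\}$ your term $e(x*y,x*y,x,\dots,x)$ works; if $I\subseteq\{1,3\}$ use $e(x*y,x,x*y,x,\dots,x)$; if $I=\{i\}$ with $i\geq 4$ use $e(x,\dots,x,x*y,x,\dots,x)$ with $x*y$ in slot $i$; and if $|I|\geq 2$ with some index $\geq 3$, one of the edge identities already places $y$'s inside $I$ and $x$'s outside, so that the raw substitution $e(u_1,\dots,u_{k+1})$ (with $u_k\in\{x,y\}$) is simultaneously $x$ in $\cc{W}$ and, by Corollary~\ref{termequivcor}, equal to $x*y$ in $\cc{T}$. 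Each case is a one-line check; the point is only that no single uniform substitution works without first looking at $I$.
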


\begin{proof}
Suppose $e$ is the term which is an edge term for $\cc{W}$, and $*$ is a $2$-semilattice operation for $\cc{T}$. Let $I=\{k:e\text{ depends on $x_k$ in $\cc{T}$}\}$. If $I\subseteq\{1,2\}$, then $x\cdot y=e(x*y,x*y,x,\dots,x)$ is the first projection in $\cc{W}$ and $x\cdot y\approx x*y$ in $\cc{T}$because $\cc{T}$ is idempotent. If $I\subseteq\{1,3\}$, the term $x\cdot y=e(x*y,x,x*y,x,\dots,x)$ satisfies the conditions. If $I=\{i\}$ for some $i\geq 4$, then $x\cdot y=e(x,x,x,\dots,x,x*y,x,\dots,x)$ satisfies the condition because $\cc{T}$ is idempotent. The $x*y$ occurs in the $i$-th position.

Now suppose $I=\{i,j\}$ with $i<j$. We have already seen that the result holds when $i=1$ and $j=2$ or $j=3$. Otherwise, $j\geq 3$ and there is some identity $e(u_1,\dots,u_{k+1})\approx x$ which holds in $\cc{W}$ where the $u_n\in\{x,y\}$, $u_i=x$, and $u_j=y$. In this case, we take $x\cdot y=e(u_1,\dots,u_{k+1})$. This term is the first projection in $\cc{W}$, and since $\cc{T}$ is term equivalent to the variety of $2$-semilattices, we have that $\cc{T}\vDash x\cdot y\approx x*y$ by Corollary~\ref{termequivcor}. A similar argument works if $|I|\geq 3$. For example, if $I=\{1,3,5\}$, we take $x\cdot y=e(y,x,y,x,x,\dots,x)$. This is the first projection in $\cc{W}$, and, since $e$ depends on variables $1,3$, and $5$ in $\cc{T}$, $\cc{T}\vDash x\cdot y\approx x*y$ for the same reason as before.
\end{proof}

\begin{proof}[Proof of Corollary~\ref{maincor}]
Fix an edge term variety, $\cc{W}$, and a similar variety, $\cc{T}$ which is term equivalent to $\cc{S}$. First, we define $\cc{W}^{(2)}$ to be the variety, similar to $\cc{W}$, axiomatized by the at most two variable identities which hold in $\cc{W}$. Since it has fewer identities, we get that $\cc{W}\leq\cc{W}^{(2)}$ and since edge terms are axiomatized by two variable identities, $\cc{W}^{(2)}$ is also an edge term variety. Therefore, it suffices to prove the Corollary in the case where $\cc{W}$ has an axiomatization consisting of only at most two variable identities. By Lemma~\ref{edgetermlem}, there is a binary term, $\cdot$, which is a $2$-semialttice operation in $\cc{T}$ and the first projection in $\cc{W}^{(2)}$. It was mentioned in the introduction that varieties having an edge term are tractable, so Theorem~\ref{mainthm} implies that every finite $\algd$ in $\cc{W}\circ\cc{T}$ satisfying $x\cdot(y\cdot z)\approx x\cdot(z\cdot y)$ is tractable. By Proposition~\ref{maltsevprodvariety}, $\cc{W}\circ\cc{T}$ is a variety, so the class, $\cc{U}$, of its members satisfying this identity is also a variety. By the properties of $\cdot$, each of $\cc{W}$ and $\cc{T}$ satisfy this identity, and it is not hard to see that both varieties are contained in their Maltsev product. It follows that $\cc{W}\join\cc{T}\leq\cc{U}$, so all finite members of $\cc{W}\join\cc{T}$ are tractable. 
\end{proof}

\section{Concluding remarks}
The defining identity, $x\cdot(y\cdot z)\approx x\cdot(z\cdot y)$, of $\cc{U}$ is what makes Lemma~\ref{functionslem} work. This Lemma is critical in the proof of correctness of Algorithm~\ref{mainalgorithm}. We construct an example of an algebra, $\algd$, and a \adj{} of $\csp(\algd)$ to demonstrate this. The type has a single ternary symbol, $q$. Let $\cc{W}$ be the variety axiomatized by $q(x,y,y)\approx q(y,x,y)\approx q(y,y,x)\approx x$. An edge term for $\cc{W}$ is $q$. In $\cc{T}$, $q$ is totally symmetric, and $x\cdot y$ defined by $q(x,y,y)$ is a semilattice operation. In other words, it is idempotent, commutative, and associative. More generally, it is a $2$-semilattice operation. Let $D=\{\top,(0,0),(0,1),(1,0),(1,1)\}$ and define
$$q^{\algd}(a_1,a_2,a_3)=\left\{\begin{array}{cl} \top & \text{if $a_1=a_2=a_3=\top$} \\ a_1+a_2+a_3 & \text{if $a_1,a_2,a_3\in\bb{Z}_2\times\bb{Z}_2$} \\ a_i & \text{if neither of the first two cases hold and} \\ & \text{$i$ is the smallest for which $a_i\in\bb{Z}_2\times\bb{Z}_2$.} \end{array} \right.$$
It's not hard to check that $\algd$ with $\cdot$ satisfies the hypotheses of Theorem~\ref{mainthm}, with the exception of (2). Now we construct a \adj{} as follows:
\begin{enumerate}
\item The variable set is $X=\{w,x,y,z\}$
\item $\algp_u=\algd$ for each $u\in X$.
\item Construct a \adj{} with potatoes all equal to $\bb{Z}_2\times\bb{Z}_2$ which does not have a solution.
\item For each $u,v\in X$, $R_{uv}$ is the relation from the (3), with $(\top,\top)$ added to it.
\end{enumerate}

This \adj{} has a solution given by $\varphi(u)=\top$ for each $u\in x$. The only Bulatov solution of the quotient system gives rise to the induced subinstance found in (3). This subinstance was constructed to have no solution. Therefore, Algorithm~\ref{mainalgorithm} will output NO, which is incorrect by the first sentence of this paragraph. The problem is that $\algd$ fails Lemma~\ref{functionslem}, so the existence of a solution does not guarantee the existence of solutions through all Bulatov solutions to the quotient system.

However, it can stlll be shown that $\csp(\algd)$ has a polynomial time algorithm by some unpublished work of Mckenzie, Markovic, and Maroti. They have collectively shown, in some cases, including this one, that $\algd$ is tractable if it has a quotient with a semilattice operation whose associated order is a tree, and each block is in a variety with a ``Maltsev" operation.

Freese and Mckenzie have shown in an unpublished paper that the Maltsev product of two Taylor varieties has a Taylor operation. Therefore, if the Algebraic Dichotomy Conjecture holds, the Maltsev product of two similar tractable varieties should be a tractable class. We finish off with three open problems.

\begin{probs}
Let $\cc{A}$ and $\cc{B}$ be tractable varieties. Are $\cc{A}\join\cc{B}$ or $\cc{A}\circ\cc{B}$ tractable if
\begin{enumerate}
\item $\cc{A}$ has a Maltsev (or edge) term and $\cc{B}$ has a semilattice (or $2$-semilattice) term.
\item $\cc{A}$ has a Maltsev (or edge) term and $\cc{B}$ is congruence meet-semidistributive.
\item no other assumptions are made about $\cc{A}$ and $\cc{B}$.
\end{enumerate}
\end{probs}

\newpage

\bibliography{fall2015}
\bibliographystyle{amsplain}
\end{document}